\documentclass[12pt,a4paper,reqno]{amsart}
\usepackage{amsmath}
\usepackage{amsfonts,amssymb,fancyhdr}
\usepackage[hidelinks]{hyperref}
\usepackage[nobysame]{amsrefs}
\makeatletter
\def\MR#1{}                
\def\@bibmrnumber#1{}
\def\@bib@mrreview#1{}
\def\@bib@mathreviews#1{}

\makeatother
\usepackage{amsthm}
\makeatletter
\renewenvironment{proof}[1][\proofname]{\par
  \pushQED{\qed}%
  \normalfont \topsep6\p@\@plus6\p@\relax
  \trivlist
  \item[\hskip\labelsep\bfseries #1\@addpunct{.}]\ignorespaces
}{%
  \popQED\endtrivlist\@endpefalse
}
\makeatother
\usepackage{tikz}
\usepackage{graphicx}
\usepackage{multirow}
\usepackage{float}
\usepackage{booktabs} 
\usepackage{xcolor}

\usepackage{geometry}

\def\Box{\vcenter{\vbox{\hrule\hbox{\vrule
     \vbox to 8.8pt{\hbox to 10pt{}\vfill}\vrule}\hrule}}}

\DeclareMathOperator{\tr}{tr}

\newcommand{\ra}{\rangle}

\newcommand{\la}{\langle}
\newcommand{\F}{{\mathbb F}}

\newcommand{\cB}{{\mathcal B}}

\newcommand{\cQ}{{\mathcal Q}}

\newcommand{\cF}{{\mathcal F}}

\newcommand{\PG}{\textup{PG}}
\newcommand{\PGL}{\textup{PGL}}
\newcommand{\POmega}{\textup{P}\Omega}
\newcommand{\GL}{\textup{GL}}

\newcommand{\SU}{\textup{SU}}
\newcommand{\SL}{\textup{SL}}
\newcommand{\SO}{\textup{SO}}
\newcommand{\PGO}{\textup{PGO}}
\newcommand{\PGaO}{\textup{P}\Gamma\textup{O}}

\newcommand{\PSL}{\textup{PSL}}

\newcommand{\bfx}{\mathbf{x}}
\newcommand{\bfy}{\mathbf{y}}
\newcommand{\bfz}{\mathbf{z}}
\newcommand{\bfu}{\mathbf{u}}
\newcommand{\bfv}{\mathbf{v}}
\newcommand{\bfw}{\mathbf{w}}
\newcommand{\bfe}{\mathbf{e}}

\newcommand{\Gso}{G_{\mathrm{0}}}
\DeclareMathOperator{\Stab}{Stab}

\DeclareMathOperator{\Aut}{Aut}

\numberwithin{equation}{section}

\theoremstyle{plain}
\newtheorem{thm}{Theorem}[section]
\newtheorem{lemma}[thm]{Lemma}
\newtheorem{proposition}[thm]{Proposition}
\newtheorem{corollary}[thm]{Corollary}

\theoremstyle{definition}

\theoremstyle{remark}

\begin{document}
\newcommand{\stopthm}{\begin{flushright}
$\box \;\;\;\;\;\;\;\;\;\; $
\end{flushright}}

\newcommand{\symfont}{\fam \mathfam}
\title{The classification of non-solvable transitive BLT-sets of $\cQ(4,q)$}
\author{Weicong~Li, Qing~Xiang$^\star$}

\address{Weicong~Li, Department of Mathematics, School of Sciences, Great Bay University, Dongguan, China.}
\email{liweicong@gbu.edu.cn}
\address{Qing Xiang, Department of Mathematics and Shenzhen International Center for Mathematics, Southern University of Science and Technology, Shenzhen, China.}
\email{xiangq@sustech.edu.cn}
\keywords{BLT-sets, flock generalized quadrangles, transitive group actions,
$\cQ(4,q)$, non-solvable groups}
\subjclass[2020]{Primary 51E12; Secondary 05E18, 51E20}
\thanks{$^\star$Corresponding author.}
\begin{abstract}
For an odd prime power $q$, a BLT-set $\cB$ is a set
of $(q+1)$ points of $\cQ(4,q)$
such that every point of $\cQ(4,q)$ is collinear with at most two points of $\cB$. We complete the classification of BLT-sets of $\cQ(4,q)$ admitting a non-solvable transitive automorphism group. Nelson and Penttila completed the classification of BLT-sets $\cB$ with a non-solvable transitive automorphism group acting irreducibly on the underlying  $5$-dimensional vector space. In this paper, we prove that a BLT-set with a non-solvable transitive full automorphism group acting reducibly on the underlying $5$-dimensional vector space is either a classical BLT-set or a Kantor semifield BLT-set.
\end{abstract}
\maketitle

\section{Introduction}\label{sec:1}
  
Let $q=p^e$ be an odd prime power, and let $\mathcal K$ be a quadratic cone in $\PG(3,q)$ with vertex $v$. A
flock of $\mathcal K$ is a set $\cF$ of $q$ planes of $\PG(3,q)$, none containing $v$,
such that the conics $\pi\cap\mathcal K$, for $\pi\in\cF$, partition
$\mathcal K\setminus\{v\}$.  Bader, Lunardon, and Thas \cite{BLT1990} introduced BLT-sets
in their study of the derivation of flocks of $\PG(3,q)$ for odd $q$. Their construction associates with a flock a set of points in
the classical generalized quadrangle $\cQ(4,q)$, namely, a \emph{BLT-set}. A set $\cB$ of $(q+1)$ points of $\cQ(4,q)$ is called a {\it BLT-set} if every point of $\cQ(4,q)$ is collinear with at most two points of $\cB$. The term ``BLT-set'' was introduced by Kantor
\cite{Kantor1991GenBLT}. A BLT-set $\cB$ gives rise to $q+1$ flocks, and
the number of non-isomorphic flocks arising from $\cB$ is
equal to the number of orbits of the stabilizer $\PGaO_5(q)_{\cB}$ on
$\cB$; see \cite{BLT1990}.

BLT-sets are closely connected with flock generalized quadrangles. Thas
\cite{Thas1987} showed that a flock of a quadratic cone gives rise to an
elation generalized quadrangle of order $(q^2,q)$, and Knarr \cite{Knarr1992} constructed the same generalized quadrangle directly from
the corresponding BLT-set. Under the duality between $\cQ(4,q)$ and
$W(3,q)$, the set $\cB$ corresponds to  a set of $q+1$ totally isotropic lines of
$W(3,q)$. If $P$ is a point of $W(5,q)$ and $P^\perp/P$ is identified with
$W(3,q)$, these lines lift to $q+1$ totally isotropic planes on $P$; in Knarr's construction, they represent the lines through the distinguished
elation point. The construction therefore depends directly on $\cB$, whereas recovering a flock requires a choice of a point of $\cB$. Payne's
theorem relates equivalence of the corresponding $q$-clans to isomorphism of
the associated generalized quadrangles \cite{Payne1996}.

BLT-sets are also related to ovoids of $Q^+(5,q)$, spreads of
$\PG(3,q)$ and the resulting translation planes \cite{Walker1976},
$q$-clans and hyperbolic fibrations \cites{BDEW1999,BEP2005}, and
hemisystems \cites{BGR2010,van2010}.

 Constructing new infinite families of BLT-sets remains difficult. Nine infinite families are recorded in \cite{Nelson-thesis}.
Five of these families consist of BLT-sets admitting transitive
automorphism groups: the classical, Kantor semifield, Fisher, Fisher--Thas--Walker, and Mondello families. A classical BLT-set is the non-degenerate conic $\pi\cap\cQ(4,q)$, where $\pi$ is a non-degenerate plane of $\PG(4,q)$ and $\pi^\perp$ is an external line to $\cQ(4,q)$. By \cite{LawPenttila2003}*{Theorem~2.4}, a BLT-set meeting a classical BLT-set
in at least $(q+1)/2$ points is either classical or Fisher. Thas showed
that a Kantor semifield BLT-set spans a hyperplane whose intersection with
$\cQ(4,q)$ is a hyperbolic quadric \cite{Thas1987}; see also \cite{LawPenttila2003}*{Theorem~2.3} or \cite{Nelson-thesis}*{Theorem 2.3.5}. A characterization of the
Fisher--Thas--Walker BLT-sets is given in \cite{Thas1993CharFTW}.

Nelson and Penttila classified the BLT-sets that admit an irreducible transitive automorphism group in \cite{NelsonPenttila2013}. All such irreducible groups are non-solvable. Apart from the Fisher--Thas--Walker family, there are two sporadic examples, found by Law and Penttila over $\F_{29}$ and $\F_{59}$, respectively; see \cite{Law2004Con}. Their full automorphism groups are $S_6$ and $S_5$, respectively. The classifications in the cases where  $q\leq29$ and $31\leq q\leq67$ are given in \cites{LawPenttila2003} and \cite{Betten2013}, respectively; the associated coordinates and stabilizers are provided in \cite{SmallBLT}.

In this paper, we consider BLT-sets $\cB$ whose full automorphism group $\overline G:=\Aut(\cB)\leq\PGaO_5(q)$ is non-solvable and acts transitively on $\cB$. Two BLT-sets are equivalent if they lie in the same
$\PGaO_5(q)$-orbit. Let $\pi:\Gamma\textup{O}_5(q)\to\PGaO_5(q)$ be the
natural projection, and let $G:=\pi^{-1}(\overline G)$ be the full preimage of $\overline G$ in $\Gamma\textup{O}_5(q)$. Set
\[
 \Gso:=G\cap\SO_5(q).
\]
Since $\mathrm{PSO}_5(q)$ is normal in $\PGaO_5(q)$ and $|\PGaO_5(q):\mathrm{PSO}_5(q)|=e$, we have 
\begin{equation}\label{eqn_orthogonal_core}
\pi(\Gso)\unlhd\overline G,
 \qquad
 [\overline G:\pi(\Gso)]\mid e.
\end{equation}

We call $\overline G$ reducible if $G$ acts reducibly on the underlying $5$-dimensional vector space, and irreducible otherwise.  Our main result is stated as follows.
  
\begin{thm}\label{thm_main}
 Let $\cB$ be a BLT-set of $\cQ(4,q)$ whose full automorphism group $\Aut(\cB)$ is non-solvable and acts transitively on $\cB$. If $\Aut(\cB)$ is reducible, then $\cB$ is either a classical BLT-set or a Kantor semifield BLT-set.
\end{thm}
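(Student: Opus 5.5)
Since $\Aut(\cB)$ is reducible, $G$ stabilizes a proper nonzero subspace $W$ of $V=\F_q^5$. Because $G$ preserves the quadratic form up to similarity, it also stabilizes $W^\perp$ and the radical $W\cap W^\perp$. After replacing $W$ by one of these, I reduce to a short list: (a) a nonsingular point $\langle w\rangle$, equivalently a non-degenerate hyperplane $w^\perp$, which is elliptic or hyperbolic; (b) a non-degenerate line $\ell$, equivalently a non-degenerate plane $\ell^\perp$; (c) a singular point $P$, equivalently its tangent hyperplane $P^\perp$; (d) a totally singular line $L$. In each case $\overline G$ embeds in the stabilizer of that subspace in $\PGaO_5(q)$. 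I then use that $\overline G$ is non-solvable and transitive on the $q+1$ points of $\cB$, together with \eqref{eqn_orthogonal_core}. In particular $\pi(\Gso)$ is non-solvable, and its orbits on $\cB$ all have equal length. I handle the cases in the following order.

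\emph{Cases (c) and (d).} The stabilizer of a singular point is a parabolic subgroup $q^3{:}(\GL_1(q)\times \mathrm{O}_3(q))$. The stabilizer of a totally singular line is $q^{1+2}{:}(\GL_2(q)\times\mathrm{O}_1)$. Their non-solvable parts are $\PSL_2(q)$ acting on a Levi factor. Suppose a point $P$ of $\cQ(4,q)$ is fixed. Since $|\cB|=q+1$ is coprime to $p$, a Sylow $p$-subgroup of the unipotent radical fixes a point of $\cB$, and transitivity forces constraints on how $\cB$ meets $P^\perp$. Each point of $\cQ(4,q)$ is collinear with at most $2$ points of $\cB$, so $P$ is collinear with at most two points of $\cB$. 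Those points form a $\overline G$-invariant subset of size at most $2$, and by transitivity this subset is empty. Hence $\cB\cap P^\perp=\emptyset$. Then $\cB$ projects from $P$ into $P^\perp/P\cong\cQ(2,q)$, and $\overline G$ induces a non-solvable group there, containing $\PSL_2(q)$ up to small index. I would show that the BLT condition fails, or that $\cB$ is forced to be a conic. Case (d) is handled the same way by looking at the $q+1$ points of $L$.

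\emph{Case (b).} Here $\overline G$ stabilizes a non-degenerate plane $\pi$ and the line $\pi^\perp$. Non-solvability forces the component on $\pi$ to contain $\Omega_3(q)\cong\PSL_2(q)$. The orbits of this group on $\cQ(4,q)$ are the conic $\pi\cap\cQ(4,q)$ and orbits of size $q(q\pm1)$ or larger. Since $\cB$ is a union of orbits of size dividing $q+1$, the group must fix $\cB$ setwise among the small orbits. I expect to show that $\cB$ is the conic $\pi\cap\cQ(4,q)$. If $\pi^\perp$ is external, this is the classical BLT-set. If $\pi^\perp$ is secant, the conic fails the BLT condition, so this case is excluded.

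\emph{Case (a), the main obstacle.} Here $\overline G$ acts on $w^\perp\cap\cQ(4,q)$, which is $\cQ^-(3,q)$ or $\cQ^+(3,q)$. Its non-solvable part lies in $\PSL_2(q^2)$ or $\PSL_2(q)\times\PSL_2(q)$ respectively. I would first show that $\cB\subseteq w^\perp$. Otherwise $\cB$ is a union of orbits off the hyperplane, and I would compare orbit lengths with divisors of $q+1$ using subgroup lists of $\PSL_2$ (Dickson). In the elliptic case this should yield either a contradiction or an orbit that is a conic, giving the classical set. In the hyperbolic case with $\cB\subseteq \cQ^+(3,q)$, the BLT condition says $\cB$ is a set of $q+1$ points of $\cQ^+(3,q)$ no two collinear and satisfying the extra collinearity condition. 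Via the Klein-type identification $\cQ^+(3,q)=\PG(1,q)\times\PG(1,q)$, $\cB$ is the graph of a permutation $f$ of $\PG(1,q)$. The non-solvable transitive group then normalizes $f$ up to a $\PSL_2(q)$ action, forcing $f(x)=x^\sigma$, possibly composed with a Möbius map, for a field automorphism $\sigma$. The case $\sigma=1$ gives the classical conic. A nontrivial $\sigma$ gives the Kantor semifield BLT-set, by Thas's characterization cited above. The hardest step is this orbit analysis: excluding sporadic subgroups $A_4$, $S_4$, $A_5$ and subfield subgroups $\PSL_2(q_0)$ whose orbits could have length $q+1$. I would handle it by divisibility and small-$q$ checks against the known tables \cite{SmallBLT}.
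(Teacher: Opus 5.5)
There is a genuine gap. In cases (b), (c) and (d) you assume that non-solvability puts the full $\Omega_3(q)\cong\PSL_2(q)$, or $\SL_2(q)$ in the Levi factor $\GL_2(q)$, inside $G$. That is false. By Dickson (Lemma~\ref{lem_subPSL}), the non-abelian composition factor of $\Gso$ may instead be a proper subfield group $\PSL_2(q_0)$, or $A_5$ when $q\equiv\pm1\pmod{10}$. In case (a) hyperbolic it may also be $2.A_5\circ2.A_5$, or a graph subgroup twisted by the outer automorphism of $A_5$. Your orbit-length claims (``orbits of size $q(q\pm1)$ or larger'') hold only for the full $\Omega_3(q)$. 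The subfield case can be repaired, but it needs separate ideas:
\begin{itemize}
\item[(1)] Outside the external-line case, a primitive prime divisor of $p^{2e}-1$ divides $(q+1)/\gcd(e,q+1)$, hence $|\Gso|$. It can only divide the order of the $\PSL_2(q_0)$ factor, which forces $q_0=q$ ($q_0\in\{q,q^2\}$ in the elliptic case); see Lemma~\ref{lem_primitive}.
\item[(2)] In the external-line case, the torus $\SO(\pi^\perp)\cong C_{q+1}$ absorbs these primes. One instead uses Lemma~\ref{Nelson-lemma} and Dickson to see that point stabilizers in $\Omega_3(q_0)$ are Borel subgroups. The only singular point such a subgroup fixes lies on the conic $\pi\cap\cQ(4,q)$.
\end{itemize}
There is also a slip in (c). Once $\cB\cap P^\perp=\emptyset$, $\cB$ does not project into $P^\perp/P\cong\cQ(2,q)$: the singular points off $P^\perp$ form an affine $3$-space on which the parabolic acts by affine maps. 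The paper instead conjugates a Levi $\Omega_3(q)$ into $\Gso^{(\infty)}$ using $H^1(\Omega_3(q),\F_q^3)=0$, and then reads off orbit lengths via Lemma~\ref{lem_short_omega3_orbits}.

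The $A_5$ case is the real obstruction, and ``divisibility and small-$q$ checks against the known tables'' cannot dispose of it. Take case (b) with $\pi^\perp$ external. Then $G$ may contain $C\times A_5$, where $C$ is the subgroup of order $(q+1)/k$ of $\SO(\pi^\perp)\cong C_{q+1}$ and $k\mid 120e$. For $k=60$, a generic point $\la(u,\bfw)\ra$ with both components nonzero has trivial stabilizer, so its orbit has length exactly $q+1$ for every $q\equiv-1\pmod{60}$. The same happens in case (a) hyperbolic with $H=I_2\circ 2.A_5$ and a cyclic torus in the other factor. So orbit counting never bounds $q$, and the tables stop at $q=67$.

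The missing input is arithmetic. Lemma~\ref{lem_disc} turns the BLT property into a sign homomorphism $\epsilon$ that is trivial on perfect subgroups (Theorem~\ref{thm_square_class}). Combined with Lemma~\ref{lem_xisquare}, this forces $\chi(\beta+\alpha y+\alpha^q y^{-1})=1$ for all $y$ in the subgroup of order $(q+1)/k$ of $\la\xi\ra$, where $\beta^2\ne4\alpha^{q+1}$. The Weil bound (Lemma~\ref{lem_caseI_torus_sum}) then gives $(q+1)/k<2\sqrt q$, so $q<4k^2$. Further congruence arguments show that $q$ is prime and $k\le120$. Only then is the problem finite, and the surviving primes, up to a few thousand, must still be eliminated by exhaustive computation. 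Even where the normalizer of $A_5$ is small ((c), (d), (a) elliptic, (b) secant), divisibility leaves $q\in\{79,239\}$, which is outside the tables.

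Finally, your permutation-graph analysis in case (a) is superfluous: once $\cB\subseteq w^\perp$, Theorem~\ref{thm_charact} finishes at once. The real work is proving $\cB\subseteq w^\perp$. For $\PSL_2(q)$ this needs two steps: exclude the one-factor subgroup using trace-$2$ unipotents, and carry out the Borel fixed-point computation for the diagonal subgroup. For $A_5$ it is reached only by the arithmetic-plus-computation route just described.
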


Combining Theorem \ref{thm_main} with the 
classification of Nelson and Penttila \cite{NelsonPenttila2013} in the irreducible case yields the following complete classification of BLT-sets admitting non-solvable transitive automorphism groups.

\begin{thm}\label{cor:full-classification}
Let $\cB$ be a BLT-set of $\cQ(4,q)$ whose full automorphism group is
non-solvable and transitive on $\cB$. Then, up to equivalence, $\cB$ is one
of the following:
\begin{enumerate}
\item a classical BLT-set;
\item a Kantor semifield BLT-set;
\item a Fisher--Thas--Walker BLT-set;
\item the Law--Penttila sporadic BLT-set over $\F_{29}$, with
$\Aut(\cB)\cong S_6$;
\item the Law--Penttila sporadic BLT-set over $\F_{59}$, with
$\Aut(\cB)\cong S_5$.
\end{enumerate}
\end{thm}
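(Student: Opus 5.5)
The statement to prove is Theorem~\ref{cor:full-classification}, which is essentially a corollary. The plan is to split on whether $\Aut(\cB)$ is reducible or irreducible in the sense defined just before Theorem~\ref{thm_main}. Let $\cB$ be a BLT-set whose full automorphism group $\overline G=\Aut(\cB)$ is non-solvable and transitive on $\cB$. Every subgroup of $\Gamma\textup{O}_5(q)$ either stabilizes a proper nonzero subspace of the underlying $5$-dimensional space or does not, so the full preimage $G$ falls into exactly one of the two cases, and we treat them separately.

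\emph{Reducible case.} If $G$ acts reducibly, then Theorem~\ref{thm_main} applies directly, and $\cB$ is classical or Kantor semifield. These are items (1) and (2).

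\emph{Irreducible case.} If $G$ acts irreducibly, then $\overline G$ is an irreducible transitive automorphism group of $\cB$, and we invoke the classification of Nelson and Penttila \cite{NelsonPenttila2013}. By it, $\cB$ is a Fisher--Thas--Walker BLT-set or one of the two Law--Penttila sporadic examples over $\F_{29}$ and $\F_{59}$ \cite{Law2004Con}, whose full automorphism groups are $S_6$ and $S_5$. This gives items (3)--(5).

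The one point needing care is matching hypotheses. Nelson and Penttila classify BLT-sets admitting \emph{some} irreducible transitive group, whereas here the group is the full automorphism group. Since $\overline G$ is itself such a group, their list contains $\cB$, so no mismatch arises. One should also check that their notion of irreducibility, whether of the group in $\PGaO_5(q)$ or of its preimage, matches ours. Irreducibility of the projective group is defined through its preimage, and scalars do not affect invariant subspaces, so the two notions agree.

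Up to equivalence the list is then exhaustive. I do not need to prove that every listed example actually has a non-solvable transitive full group, because the statement asserts only the containment ``$\cB$ is one of the following''. Hence there is no real obstacle: all the substance lies in Theorem~\ref{thm_main} and \cite{NelsonPenttila2013}.
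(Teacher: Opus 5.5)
Your proposal is correct and follows the paper's argument. The paper obtains this theorem in exactly this way: Theorem~\ref{thm_main} covers the reducible case, and the Nelson--Penttila classification \cite{NelsonPenttila2013} covers the irreducible case. Your observation that $\Aut(\cB)$ is itself an irreducible transitive group, so that $\cB$ appears on their list, is the only hypothesis check required.
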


The paper is organized as follows. In Section \ref{sec:2}, we collect the geometric and group-theoretic preliminaries, including the quadratic-character sum estimates and explicit representations that will be used later. We handle the case in which $\Gso$ has a composition factor isomorphic to $\PSL_2(q_0)$ in Section \ref{sec:3}, whereas Section \ref{sec:4} deals with the exceptional $A_5$ case. For the $A_5$ case, we first use the character-sum estimate derived from the Weil bound (Theorem \ref{thm_FFquadsum}) to reduce the problem to finitely many values of $q$, and then use exhaustive exact finite-field computations to eliminate the remaining cases. Finally, in Section \ref{sec:5}, we interpret the classification in terms of flock generalized quadrangles and discuss some remaining open problems.

\section{Preliminaries}\label{sec:2} 
Throughout the paper, let $p$ be an odd prime and $q=p^e$, where $e\geq1$. Let
$\F_q$ be the finite field of order $q$, and write
$ \F_{q}^{\times}=\F_q\setminus\{0\}$. Let
\[
\square_q:=\{x^2:x\in \F_{q}^{\times}\},
\qquad
\blacksquare_q:=\F_{q}^{\times}\setminus\square_q,
\]
denote the sets of nonzero squares and nonsquares in $\F_q$,
respectively. For $a\in \F_{q}^{\times}$, write $a\square_q:=\{as:s\in\square_q\}$ and
$a\blacksquare_q:=\{an:n\in\blacksquare_q\}$. Let
$\chi: \F_{q}^{\times}\to\{\pm1\}$ be the quadratic character, extended to the whole field by setting $\chi(0)=0$. Let $V$ be a five-dimensional vector space over $\F_q$ with a non-degenerate quadratic form $Q$. The polarization of $Q$ is a bilinear form $B$ defined as follows:
\[
B(\bfx,\bfy):=Q(\bfx+\bfy)-Q(\bfx)-Q(\bfy),
\qquad \forall\, \bfx,\bfy\in V.
\]

For each nonzero vector $\bfv\in V$, write $\la\bfv\ra$ for the projective
point $\F_q\bfv$ in the projective space $\PG(V)=\PG(4,q)$. When applied to group elements, the angle brackets denote the subgroup they generate. A point $\la\bfv\ra$ is called \emph{singular} if $Q(\bfv)=0$. A (projective) line is called \emph{totally singular} if all its points are singular. These singular points and totally singular lines form the classical generalized quadrangle $\cQ(4,q)$ of order $(q,q)$. A projective line $\ell$ of $\PG(V)$ is called \emph{external} to $\cQ(4,q)$ if $\ell\cap \cQ(4,q)=\emptyset$. Equivalently, if $\ell =\PG(U)$, then the restriction $Q|_{U}$ is called \emph{anisotropic}: $Q(\bfu)\ne 0$ for
every ${\bf 0} \neq \bfu\in U$. For a vector subspace $U\leq V$, we write 
\[
U^\perp:=\{\bfx\in V:B(\bfx,\bfu)=0,\, \forall\,\bfu\in U\},
\]
and the same notation is used for the corresponding projective subspace. Let $\operatorname{disc}(Q)$ denote the determinant of the Gram matrix of $B$ with respect to a chosen basis. The square class of $\operatorname{disc}(Q)$ is independent of the choice of basis. Write $\Gamma\textup{O}_5(q)$ for the full semisimilarity group of $Q$, $\PGaO_5(q)$ for its projective image, and $
\pi:\Gamma\textup{O}_5(q)\longrightarrow\PGaO_5(q)$ for the natural projection. For $X\leq\Gamma\textup{O}_5(q)$, set
$\overline X:=\pi(X)$. Throughout the paper, vectors in $V$ are written as column vectors and matrices act on vectors from the left. Thus a matrix $g$ maps $\bfv$ to $g\bfv$, and $gP:=\la g\bfv\ra$ for $P=\la\bfv\ra$. If $X$ acts on $V$ or
$\PG(V)$, we write
\[
\bfv^X:=\{x\bfv:x\in X\} \text{ and }
P^X:=\{xP:x\in X\}.
\]

Recall that a partial BLT-set $\cB$ is a set of pairwise non-collinear
points of $\cQ(4,q)$ such that no point of $\cQ(4,q)$ is collinear with
three points of $\cB$. Bader, O'Keefe, and Penttila gave the following
algebraic criterion for partial BLT-sets; see \cite{BOP2004some}*{Corollary 4.2}.
\begin{lemma}\label{lem_disc}
Let $\cB$ be a set of at least three points of $\cQ(4,q)$, where
$q$ is an odd prime power. Then $\cB$ is a partial BLT-set if and only if, for every triple of distinct points $\la\bfx\ra,\la\bfy\ra,\la\bfz\ra$ of $\cB$,
\[
-\frac{2B(\bfx,\bfy)B(\bfx,\bfz)B(\bfy,\bfz)}
{\operatorname{disc}(Q)}\in\blacksquare_q.
\]
Moreover, if $|\cB|=q+1$ and the above condition holds for every triple of distinct points, then $\cB$ is a BLT-set.
\end{lemma}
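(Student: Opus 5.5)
The plan is to reduce the condition on a triple to a statement about the line $\la\bfx,\bfy,\bfz\ra^\perp$, and then read off anisotropy of that line from discriminants.

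First I check that the expression is well defined on projective points. Replacing $\bfx$ by $\lambda\bfx$ multiplies $B(\bfx,\bfy)B(\bfx,\bfz)$ by $\lambda^2$, so the square class of $-2B(\bfx,\bfy)B(\bfx,\bfz)B(\bfy,\bfz)/\operatorname{disc}(Q)$ depends only on the points. The same holds for $\bfy$ and $\bfz$. Also $\operatorname{disc}(Q)$ is well defined up to squares.

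Next I show that the condition forces pairwise non-collinearity. Two distinct singular points $\la\bfx\ra,\la\bfy\ra$ are collinear in $\cQ(4,q)$ if and only if $B(\bfx,\bfy)=0$. In that case the expression is $0\notin\blacksquare_q$. So it suffices to fix three distinct, pairwise non-collinear singular points and put $a=B(\bfx,\bfy)$, $b=B(\bfx,\bfz)$, $c=B(\bfy,\bfz)$, all nonzero.

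The vectors $\bfx,\bfy,\bfz$ are linearly independent. Indeed, the line $\la\bfx,\bfy\ra$ carries the form $Q(s\bfx+t\bfy)=st\,a$. This form is hyperbolic, so the line contains only the two singular points $\la\bfx\ra$ and $\la\bfy\ra$, and hence cannot contain $\la\bfz\ra$.

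Let $U=\la\bfx,\bfy,\bfz\ra$. The Gram matrix of $B|_U$ is
\[
\begin{pmatrix}0&a&b\\ a&0&c\\ b&c&0\end{pmatrix},
\]
whose determinant is $2abc\neq0$. So $U$ is non-degenerate, $V=U\perp W$ with $W=U^\perp$ of dimension $2$ and non-degenerate, and
\[
\operatorname{disc}(Q)\equiv 2abc\cdot\det(G_W)\pmod{\square_q},
\]
where $G_W$ is the Gram matrix of $B|_W$.

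A singular point is collinear with all three of $\la\bfx\ra,\la\bfy\ra,\la\bfz\ra$ exactly when it is a singular point of $\PG(W)$. This uses that, for singular $\bfw$, collinearity with $\la\bfx\ra$ means $B(\bfw,\bfx)=0$ (and $\bfw\notin U$, since $U\cap U^\perp=0$).

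Write $Q|_W(s,t)=\alpha s^2+\beta st+\gamma t^2$. Then $G_W=\begin{pmatrix}2\alpha&\beta\\ \beta&2\gamma\end{pmatrix}$ and $\det G_W=-(\beta^2-4\alpha\gamma)$. The line $\PG(W)$ is anisotropic if and only if $\beta^2-4\alpha\gamma\in\blacksquare_q$, that is, $-\det G_W\in\blacksquare_q$. Using the discriminant relation above, this is equivalent to
\[
-\frac{2abc}{\operatorname{disc}(Q)}\in\blacksquare_q .
\]
Thus the triple has no common collinear point if and only if the stated condition holds. This proves the partial BLT-set equivalence.

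For the final assertion, let $\cB$ be a partial BLT-set with $|\cB|=q+1$. A point of $\cB$ is collinear with no other point of $\cB$, and any other point of $\cQ(4,q)$ is collinear with at most two points of $\cB$. This is exactly the definition of a BLT-set.

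The only step requiring care is the discriminant bookkeeping, namely the factor $2$ and the sign relating $\det G_W$ to the discriminant of the binary form. This is where I expect a possible slip, and it is why I compute $\det G_W=4\alpha\gamma-\beta^2$ explicitly.
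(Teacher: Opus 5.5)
Your proof is correct. The paper does not prove this lemma; it quotes it from Bader, O'Keefe and Penttila \cite{BOP2004some} (Corollary~4.2) and uses it as a black box. Your argument is therefore a self-contained replacement for that citation rather than a variant of an argument in the text.

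Each step of your route works:
\begin{itemize}
\item The Gram determinant of $U=\la\bfx,\bfy,\bfz\ra$ is $2abc\neq0$, so $V=U\perp U^\perp$ with $U^\perp$ a non-degenerate $2$-space.
\item The common neighbours of the triple are exactly the singular points of the line $\PG(U^\perp)$.
\item Anisotropy of that line is read off from $-\det G_{U^\perp}\equiv -2abc/\operatorname{disc}(Q)$ modulo squares.
\end{itemize}
Your handling of the factor $2$ and the sign is right for $\operatorname{disc}(Q)$ defined as the Gram determinant of $B$, as in the paper; it gives $\operatorname{disc}(Q)=-2$ for $x_1x_5+x_2x_4-x_3^2$, matching Section~3.

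The citation buys brevity. Your argument exposes the geometric content of the criterion: a pairwise non-collinear triple has a common neighbour exactly when $\la\bfx,\bfy,\bfz\ra^\perp$ is a secant line rather than an external one. This is also why the conic $\pi\cap\cQ(4,q)$ with $\pi^\perp$ external is a BLT-set.

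The one thing to state explicitly is where $|\cB|\geq 3$ enters. It guarantees that every pair of points of $\cB$ lies in some triple, so the triple condition really does force pairwise non-collinearity. Also, your separate linear-independence argument is not needed: $2abc\neq0$ already forces $\bfx,\bfy,\bfz$ to be independent.
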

Law and Penttila \cite{Law2004Con} used this criterion to search for BLT-sets over small fields. Complete classifications in the cases where $q\leq29$, and $31\leq q\leq67$ were given in \cite{LawPenttila2003} and \cite{Betten2013},
respectively. The relevant coordinates and full stabilizers are
available on the webpage \cite{SmallBLT}. 

\begin{thm}\label{thm_square_class}
Let $P=\la\bfv\ra$ be a point of a BLT-set $\cB$. Let
$H\leq\SO_5(q)$ with $\overline{H}$ preserving $\cB$, and assume that 
$\Stab_H(\la\bfv\ra)=\Stab_H(\bfv)$. Then
\[
\epsilon_{P,H}:H\to\{\pm1\},\qquad
\epsilon_{P,H}(h)=
\begin{cases}
-\chi\bigl(B(\bfv,h\bfv)\bigr)
 \chi\bigl(-\operatorname{disc}(Q)/2\bigr),
&\text{if }hP\ne P,\\
1,&\text{if }hP=P,
\end{cases}
\]
is a group homomorphism. In particular, if $L\leq H$ is perfect and
$g\in L$ satisfies
$gP\ne P$, then
\[
B(\bfv,g\bfv)\in-\frac{\operatorname{disc}(Q)}{2}\blacksquare_q.
\]
\end{thm}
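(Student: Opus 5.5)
The plan is to derive the homomorphism property directly from Lemma \ref{lem_disc}, using the fact that $H\le\SO_5(q)$ preserves $B$ and that stabilizers of $P$ fix $\bfv$ itself. Throughout, set $c:=\chi(-\operatorname{disc}(Q)/2)$. For $hP\neq P$, both $\bfv$ and $h\bfv$ are singular and $\la\bfv\ra,\la h\bfv\ra$ are distinct points of $\cB$. Points of a BLT-set are pairwise non-collinear, so $B(\bfv,h\bfv)\neq0$ and $\epsilon_{P,H}(h)\in\{\pm1\}$ is well defined.

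The key reformulation comes from Lemma \ref{lem_disc}. For any three distinct points $\la\bfx\ra,\la\bfy\ra,\la\bfz\ra$ of $\cB$, the lemma says
\[
\chi\bigl(B(\bfx,\bfy)\bigr)\chi\bigl(B(\bfx,\bfz)\bigr)\chi\bigl(B(\bfy,\bfz)\bigr)\,c=-1.
\]
Define $f(h):=-c\,\chi(B(\bfv,h\bfv))$ for $hP\ne P$ and $f(h)=1$ otherwise. For $g,h\in H$ we need $f(gh)=f(g)f(h)$, and we split into cases according to which of $P,\ hP,\ ghP$ coincide.

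\emph{Case 1: $P,gP,ghP$ are distinct.} Take the triple $\bfx=\bfv$, $\bfy=g\bfv$, $\bfz=gh\bfv$. Invariance of $B$ under $g$ gives $B(g\bfv,gh\bfv)=B(\bfv,h\bfv)$, and here $hP\neq P$ because $gP\neq ghP$. Multiplying by $c^3=c$ and using the lemma,
\[
f(g)f(h)f(gh)=-c^3\chi(\cdots)=-c\cdot(-c)=1,
\]
so $f(gh)=f(g)f(h)$ since all values are $\pm1$.

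\emph{Case 2: $ghP=P$.} Then $gh$ fixes $\bfv$ by the hypothesis $\Stab_H(\la\bfv\ra)=\Stab_H(\bfv)$, so $h\bfv=g^{-1}\bfv$. If $gP=P$, both sides equal $1$. Otherwise
\[
B(\bfv,h\bfv)=B(\bfv,g^{-1}\bfv)=B(g\bfv,\bfv),
\]
so $f(h)=f(g)$ and $f(g)f(h)=1=f(gh)$.

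\emph{Case 3: $gP=P$.} Then $g\bfv=\bfv$, so
\[
B(\bfv,gh\bfv)=B(g^{-1}\bfv,h\bfv)=B(\bfv,h\bfv),
\]
giving $f(gh)=f(h)=f(g)f(h)$. The degenerate case $gP=ghP$ forces $hP=P$, so $h\bfv=\bfv$ and $f(gh)=f(g)$ trivially. This exhausts all cases.

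For the consequence, $\epsilon_{P,H}$ restricted to a perfect $L$ has abelian image, hence is trivial. So for $g\in L$ with $gP\neq P$ we get $\chi(B(\bfv,g\bfv))=-c$, which says exactly that $B(\bfv,g\bfv)\in-\frac{\operatorname{disc}(Q)}2\blacksquare_q$. The main care point is the case bookkeeping, and in particular using the hypothesis $\Stab_H(\la\bfv\ra)=\Stab_H(\bfv)$: this is what replaces the projective identity $hP=g^{-1}P$ by the vector identity $h\bfv=g^{-1}\bfv$, which the sign computation needs.
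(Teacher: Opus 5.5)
Your proof is correct and follows the paper's argument: apply Lemma \ref{lem_disc} to the triple $P, gP, ghP$ using $B(g\bfv,gh\bfv)=B(\bfv,h\bfv)$, treat the coincident cases with the hypothesis $\Stab_H(\la\bfv\ra)=\Stab_H(\bfv)$, and then note that $\epsilon_{P,H}$ is trivial on a perfect subgroup. The only difference is that you write out the degenerate cases that the paper leaves as a ``direct verification.''
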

\begin{proof}
To simplify notation, write $\epsilon=\epsilon_{P,H}$. We show that
$\epsilon(ab)=\epsilon(a)\epsilon(b)$ for any $a,b\in H$. Let $a,b\in H$. If $P$, $aP$, and
$abP$ are pairwise distinct, then Lemma \ref{lem_disc} and
$B(a\bfv,ab\bfv)=B(\bfv,b\bfv)$ give
\[
\epsilon(a)\epsilon(b)\epsilon(ab)=1.
\]
If two of these three points coincide, a direct verification shows that the above equality holds by the definition of $\epsilon$ and the fact that $\SO_5(q)$ preserves the form $B$. Therefore,
$\epsilon(ab)=\epsilon(a)\epsilon(b)$ for all $a,b\in H$, and hence
$\epsilon$ is a group homomorphism.

Since $L$ is perfect, $L=[L,\,L]$, and the restriction of $\epsilon$ to $L$
is trivial. Hence $\epsilon(g)=1$, so
$\chi(B(\bfv,g\bfv))=-\chi(-\operatorname{disc}(Q)/2)$, which is equivalent to
$B(\bfv,g\bfv)\in-\frac{\operatorname{disc}(Q)}{2}\blacksquare_q$.
\end{proof}

Based on the correspondence between BLT-sets and flocks, Thas's
geometric characterization of flocks \cite{Thas1987} can be reformulated in terms of BLT-sets.

\begin{thm}[\cite{LawPenttila2003}*{Theorem~2.3}]\label{thm_charact}
Let $q$ be an odd prime power. A BLT-set of $\cQ(4,q)$ contained in a hyperplane is either classical or a Kantor semifield BLT-set.
\end{thm}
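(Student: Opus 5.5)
The plan is to split according to the type of the hyperplane section, and then transfer the problem to flocks, where Thas's classical results apply. Let $\cB\subseteq \Pi=\bfw^\perp$ for some nonzero $\bfw\in V$. Then $\Pi\cap\cQ(4,q)$ is one of three things: a cone $\bfw\,\cQ(2,q)$ (when $\la\bfw\ra$ is singular), an elliptic quadric $\cQ^-(3,q)$, or a hyperbolic quadric $\cQ^+(3,q)$.

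\emph{Step 1: the tangent case.} This case is impossible. If $\la\bfw\ra$ is singular, every singular point of $\bfw^\perp$ is collinear with $\la\bfw\ra$. Then $\la\bfw\ra$ is collinear with at least $q\ge 3$ points of $\cB$, whether or not $\la\bfw\ra\in\cB$, contradicting the BLT property.

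\emph{Step 2: the elliptic case gives a classical BLT-set.} I would use a double count. Let $\mathcal P$ be the set of singular points outside $\Pi$, so $|\mathcal P|=q^3+q$. For $P\in\mathcal P$ put $n_P=|\cB\cap P^\perp|\le 2$.
\begin{itemize}
\item Each $x\in\cB$ has $x^\perp\cap\cQ^-(3,q)=\{x\}$, so all $q(q+1)$ singular neighbours of $x$ lie outside $\Pi$. Hence $\sum_P n_P=q(q+1)^2$.
\item For distinct $x,y\in\cB$, the set $\{x,y\}^\perp$ is a conic of $q+1$ points, of which $0$ or $2$ lie in $\Pi$, according as the line $\la x,y\ra^\perp\cap\Pi$ is external or secant to $\cQ^-(3,q)$. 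This gives $\sum_P\binom{n_P}{2}$ as a sum of $q+1$ or $q-1$ over pairs.
\end{itemize}
Comparing these sums, and using Lemma~\ref{lem_disc} to pin down the square class of $B(\bfx,\bfy)$, should force each line $\la x,y\ra^\perp\cap\Pi$ to be of one fixed type. From this I expect to show that $\cB$ spans only a plane $\sigma$, so that $\cB=\sigma\cap\cQ(4,q)$ is a conic and $\sigma^\perp$ is external, i.e.\ $\cB$ is classical. A fallback is to pass to the flock: by \cite{BLT1990} the flock planes then all pass through a common line off the cone, and Thas showed such a flock is linear.

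\emph{Step 3: the hyperbolic case gives a Kantor semifield BLT-set.} Fix $x\in\cB$ and form the associated flock via the BLT-set/flock correspondence \cite{BLT1990}. The condition $\cB\subseteq\bfw^\perp$ with $\bfw^\perp\cap\cQ(4,q)=\cQ^+(3,q)$ translates into the statement that all $q$ flock planes pass through one common point not on the cone. Thas \cite{Thas1987} proved that such a flock is a semifield flock, and for odd $q$ it is linear or Kantor--Knuth; the corresponding BLT-set is classical or Kantor semifield. Since this translation is independent of the choice of $x$, the conclusion passes back to $\cB$.

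I expect the main obstacle to be the dictionary between the hyperplane containing $\cB$ and the common point or line of the flock planes. Concretely, this means writing the Knarr/BLT coordinates $\cB=\{\la(1,t,\dots)\ra\}$ for the $q$-clan attached to $x$, and checking that "a linear form vanishes on $\cB$" is equivalent to the $q$-clan functions satisfying one linear relation. That linear relation is exactly Thas's common-point condition. Everything else reduces to citing \cite{Thas1987}.
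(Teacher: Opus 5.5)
Your Steps~1 and~3 are sound, and Step~3 is essentially what lies behind the paper's treatment. The paper gives no proof here; it cites Law--Penttila's Theorem~2.3, which is Thas's common-point theorem for flocks rewritten for BLT-sets.

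The dictionary you expect to be the main obstacle is a one-line, coordinate-free check. For $x\in\cB$, the flock of the cone $\mathcal K=x^\perp\cap\cQ(4,q)$ consists of the planes $x^\perp\cap P^\perp$ with $P\in\cB\setminus\{x\}$. Because $x\in\cB\subseteq\bfw^\perp$, the point $\la\bfw\ra$ lies in $x^\perp$ and on every one of these planes. By Step~1, $\bfw$ is non-singular, so $\la\bfw\ra\notin\mathcal K$. When you invoke Thas, cite precisely the statement that for odd $q$ a flock whose planes share a point is linear or Kantor--Knuth. Do not route this through ``semifield flock'': for odd $q$ (e.g.\ in characteristic $3$) there are semifield flocks that are neither.

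The genuine gap is Step~2, where the double count carries no information. Since $\Pi=\la x,y\ra\perp(\la x,y\ra^\perp\cap\Pi)$ and an elliptic solid has Witt index $1$, the line $\la x,y\ra^\perp\cap\Pi$ is \emph{always} external, so there is nothing to ``force.'' Your two sums then become $\sum_P n_P=q(q+1)^2$ and $\sum_P\binom{n_P}{2}=(q+1)\binom{q+1}{2}$. These only say that $n_P\in\{0,2\}$ for every $P\in\mathcal P$. That holds for every BLT-set: the same count over all singular points outside $\cB$ gives it in general. Lemma~\ref{lem_disc} is just a restatement of the BLT hypothesis and adds nothing. So nothing in Step~2 pushes $\cB$ into a plane.

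The fallback is circular. The dictionary yields only a common \emph{point}, and ``all flock planes share a line'' is equivalent to linearity, which is the conclusion you want.

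The repair is to drop the case split. The translation in Step~3 never uses that $\bfw^\perp\cap\cQ(4,q)$ is hyperbolic, so Thas's theorem handles elliptic and hyperbolic solids at once. The statement does not ask you to say which case yields which BLT-set. The refinement that an elliptic solid forces a conic is true (a non-classical Kantor semifield BLT-set spans a unique solid, and that solid is hyperbolic), but it is not needed here.
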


\subsection{Some preliminary results from group theory}
For positive integers $n$ and $i$, let $C_n$ denote a cyclic group of order $n$. Let $E_q^i$ denote an elementary abelian group of order $q^i$, and write $E_q=E_q^1$. For $g,h\in G$, write $[g,h]:=g^{-1}h^{-1}gh$. The derived subgroup is $G'=[G,G]:=\la[g,h]:g,h\in G\ra$, and $G$ is \emph{perfect} if $G'=G$. We write $G^{(\infty)}$ for the last term of the derived series of $G$. For a linear or semilinear group $H$, $\overline{H}$ denotes its image under the relevant projection map. If $G_1,G_2\leq\SL_2(q)$ both contain $-I_2$, their central product is
\[
G_1\circ G_2:=(G_1\times G_2)/\la(-I_2,-I_2)\ra.
\]

Let $n>1$ and $k\geq 1$ be integers. A divisor $t$ of $n^k-1$ is
\emph{primitive} if it is coprime to $n^i-1$ for $1\leq i<k$. The largest
such divisor is called the \emph{primitive part} $\Phi_k^*(n)$ of $n^k-1$. A prime divisor $t$ of $n^k-1$ is a \emph{primitive prime divisor}, also called a \emph{Zsigmondy prime}, if $n$ has order $k$ modulo $t$.
We use Zsigmondy's theorem in the following form.
\begin{lemma}[\cite{Zsigmondy}]\label{lem_Zsigmondyprime}
{Let $n>1$ and $k\geq3$, with $(n,k)\ne(2,6)$. Then
$n^k-1$ has a primitive prime divisor.}
\end{lemma}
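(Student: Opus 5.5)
This is Zsigmondy's theorem, which the paper quotes from the literature, so the plan is the standard classical proof via cyclotomic polynomials. Write
\[
n^k-1=\prod_{d\mid k}\Phi_d(n),
\]
where $\Phi_d$ is the $d$-th cyclotomic polynomial. A prime $t$ is primitive for $n^k-1$ exactly when the multiplicative order of $n$ modulo $t$ is $k$. Since $t$ divides $n^{\mathrm{ord}_t(n)}-1$, such a $t$ must divide $\Phi_k(n)$. So it suffices to show that $\Phi_k(n)$ has a prime divisor $t$ with $\mathrm{ord}_t(n)=k$.

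The first key step controls the non-primitive primes dividing $\Phi_k(n)$. Let $t\mid\Phi_k(n)$ with $\mathrm{ord}_t(n)=m<k$. Then $m\mid k$, and by the lifting-the-exponent lemma applied to $n^m\equiv1\pmod t$, one gets $k=mt^a$ with $a\ge1$. Hence $t$ is the largest prime divisor of $k$. Moreover, $t^2\nmid\Phi_k(n)$, except when $t=2$ and $k=2$, which is excluded since $k\ge3$. Consequently, if $\Phi_k(n)$ had no primitive prime divisor, then $\Phi_k(n)\le P$, where $P$ is the largest prime factor of $k$. In particular $\Phi_k(n)\le k$.

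The second key step is a lower bound on $\Phi_k(n)$. Use
\[
\Phi_k(n)=\prod_{\zeta}(n-\zeta),
\]
the product taken over the primitive $k$-th roots of unity $\zeta$. Each factor satisfies $|n-\zeta|>n-1\ge1$, with strict inequality since $\zeta\ne1$. This gives $\Phi_k(n)>(n-1)^{\varphi(k)}$. For $n\ge3$ this already exceeds $k$ in almost all cases, since $2^{\varphi(k)}\ge k$ apart from a few small $k$. The main obstacle is $n=2$, where the crude bound degenerates to $1$. For $n=2$, use instead $|2-\zeta|\ge\sqrt{3}$, which holds whenever $\mathrm{Re}\,\zeta\le 1/2$, i.e.\ whenever $k\ge 6$. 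The finitely many small exceptions for $\varphi(k)$ and $k$ are then checked directly. The only failure that survives is $\Phi_6(2)=3$: here $3\mid 2^2-1$, so $3$ is not primitive, and this is the excluded case $(n,k)=(2,6)$. The only other near-miss is $k=2$ with $n+1$ a power of $2$, which does not arise because $k\ge3$.

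Combining the two steps, $\Phi_k(n)>P$ in all allowed cases. Since at most one non-primitive prime divides $\Phi_k(n)$, and then only to the first power, some prime divisor of $\Phi_k(n)$ is primitive, and it is a primitive prime divisor of $n^k-1$.
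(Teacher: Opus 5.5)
The paper gives no proof of this lemma; it only cites Zsigmondy. Your cyclotomic route is the classical one, and its first half is sound. A primitive prime must divide $\Phi_k(n)$. A non-primitive prime divisor of $\Phi_k(n)$ can only be the largest prime $P$ of $k$, and occurs to the first power. So it suffices to prove $\Phi_k(n)>P$. For $n\ge3$ this is immediate, since $\Phi_k(n)>2^{\varphi(k)}\ge 2^{P-1}\ge P$; comparing with $P$ rather than $k$ removes your exceptions.

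\textbf{The gap is in the case $n=2$}, which you rightly single out as the main obstacle. Your claim that every primitive $k$-th root of unity has $\mathrm{Re}\,\zeta\le 1/2$ ``whenever $k\ge 6$'' is backwards. It holds only for $2\le k\le 6$. For every $k\ge7$, the primitive roots $e^{\pm 2\pi i/k}$, and indeed all primitive roots with $|\arg\zeta|<\pi/3$, satisfy $|2-\zeta|<\sqrt3$. Moreover $|2-e^{2\pi i/k}|\to1$ as $k\to\infty$. So the estimate $\Phi_k(2)\ge 3^{\varphi(k)/2}$ is unproved for all $k\ge 7$, which is exactly the infinite family that needed an argument. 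The only bound you actually have for $n=2$ is the useless $\Phi_k(2)>1$.

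\textbf{A standard repair} uses $\Phi_k(x)=\prod_{d\mid k}(x^d-1)^{\mu(k/d)}$. The factors with $\mu(k/d)=-1$ are at least $1$, and those with $\mu(k/d)=1$ are distinct terms $1-2^{-d}<1$. Hence
\[
\Phi_k(2)=2^{\varphi(k)}\prod_{d\mid k}\bigl(1-2^{-d}\bigr)^{\mu(k/d)}\ge 2^{\varphi(k)}\prod_{j\ge1}\bigl(1-2^{-j}\bigr)>2^{\varphi(k)-2}.
\]
Since $P\le\varphi(k)+1$, this gives $\Phi_k(2)>P$ whenever $2^{\varphi(k)-2}\ge P$. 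That inequality fails only when $\varphi(k)\in\{2,4\}$ with $P>2^{\varphi(k)-2}$, namely for $k\in\{3,4,5,6,10\}$. The direct values are $\Phi_3(2)=7$, $\Phi_4(2)=5$, $\Phi_5(2)=31$, $\Phi_6(2)=3$ and $\Phi_{10}(2)=11$. Only $\Phi_6(2)=3=P$ fails, which is the excluded case $(2,6)$.

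\textbf{Two smaller points.} First, the shape $k=mt^a$ does not follow from lifting-the-exponent alone. Use instead that if a prime $r\ne t$ divided $k/m$, then $\Phi_k(n)$ divides
\[
\frac{n^k-1}{n^{k/r}-1}\equiv r\not\equiv0\pmod t.
\]
Second, ``$t$ is the largest prime divisor of $k$'' needs the observation $m\mid t-1$.
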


{A primitive prime divisor $t$ satisfies $t\equiv1\pmod{k}$, so
$\gcd(t,k)=1$. Thus, if such a divisor exists, then $\Phi_k^*(n)>1$ and
$\gcd(\Phi_k^*(n),k)=1$. In particular, $n^k-1$ has a primitive prime
divisor when $n$ is odd and $k>2$. The number $n^2-1$ also has a primitive
prime divisor if $n+1$ is not a power of $2$.}

The reducible geometric subgroups of the classical groups in dimension
five were determined by Mitchell \cite{Mitchell1914} (via the isomorphism $\POmega_5(q)\cong\mathrm{PSp}_4(q)$); see also
\cites{Kantor1982Rank3} and \cite{BHR407}*{Table~8.22}. 

\begin{thm}\label{thm_SO_5(q)}
Let $q=p^e$ be an odd prime power with $q\geq5$, and let
$X\leq\Gamma\mathrm O_5(q)$ act reducibly on $V=\F_q^5$. Put $X_{0}:=X\cap\SO_5(q)$. Then $X$ preserves one of the following
geometries, and $X_{\mathrm{0}}$ is a subgroup of the corresponding
stabilizer:
\begin{enumerate}
\item[(i)] $X$ fixes a point of $\cQ(4,q)$, and
$X_{\mathrm{0}}\leq E_q^3:(C_{q-1}\times\SO_3(q))$;
\item[(ii)] $X$ stabilizes a hyperbolic pair of non-collinear points of
$\cQ(4,q)$, and $X_{\mathrm{0}}\leq(C_{q-1}\times\SO_3(q)).2$;
\item[(iii)] $X$ stabilizes a line of $\cQ(4,q)$, and
$X_{\mathrm{0}}\leq U:\GL_2(q)$, where $U$ is a Heisenberg group of order $q^3$,
$Z(U)\cong E_q$, and $U/Z(U)\cong E_q^2$;
\item[(iv)] $X$ stabilizes an external line to $\cQ(4,q)$, and  $X_{\mathrm{0}}\leq(C_{q+1}\times\SO_3(q)).2$;
\item[(v)] $X$ fixes a non-singular point $P$ such that $P^\perp\cap\cQ(4,q)$ is an elliptic quadric, and $X_{\mathrm{0}}\leq\Stab_{\SO_5(q)}(P)\cong\mathrm{O}_4^-(q)$;
\item[(vi)] $X$ fixes a non-singular point $P$ such that $P^\perp\cap\cQ(4,q)$ is a hyperbolic quadric, and $X_{\mathrm{0}}\leq\Stab_{\SO_5(q)}(P)\cong\mathrm{O}_4^+(q)$.
\end{enumerate}
\end{thm}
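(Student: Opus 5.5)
We reduce Theorem~\ref{thm_SO_5(q)} to the classification of maximal subgroups of $\POmega_5(q)\cong\mathrm{PSp}_4(q)$ in the Aschbacher class $\mathcal C_1$, which is the content of Mitchell's theorem and of \cite{BHR407}*{Table~8.22}. Let $W$ be a proper nonzero $X$-invariant subspace of $V$. Since $X$ consists of semisimilarities, it preserves $Q$ up to scalars and field automorphisms. Hence $W^\perp$ and $W\cap W^\perp$ are also $X$-invariant. So we may replace $W$ by the radical $W\cap W^\perp$ when it is nonzero, and otherwise by whichever of $W,W^\perp$ has dimension at most $2$. After this replacement $W$ is either totally singular, of dimension $1$ or $2$, or non-degenerate, of dimension $1$ or $2$ (using $V=W\oplus W^\perp$ in the non-degenerate case).

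This gives six possibilities, which we match with (i)--(vi) as follows.
\begin{itemize}
\item[(a)] $W$ is a singular point: case (i).
\item[(b)] $W$ is a totally singular line: case (iii).
\item[(c)] $W$ is a non-degenerate line of hyperbolic type, so $\PG(W)$ meets $\cQ(4,q)$ in two non-collinear points $\{P_1,P_2\}$, and $X$ stabilizes this pair: case (ii).
\item[(d)] $W$ is anisotropic, so $\PG(W)$ is an external line: case (iv).
\item[(e)] $W$ is a non-singular point $P$ with $P^\perp$ of minus type: case (v).
\item[(f)] $W$ is a non-singular point $P$ with $P^\perp$ of plus type: case (vi).
\end{itemize}
The minus/plus type of $P^\perp$ is read off from the square class of $Q(\bfv)\operatorname{disc}(Q)$.

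It remains to compute the stabilizer in $\SO_5(q)$ in each case and show that it embeds in the stated group; this is the main work.

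In case (i), choose a Witt basis $\bfe,\bff$ with $\langle\bfe\rangle=W$ and write $V=\langle\bfe,\bff\rangle\perp U$ with $\dim U=3$. The stabilizer is the parabolic subgroup whose unipotent radical is $\{x\mapsto x+B(x,\bfe)\bfu-\dots\}$, which is isomorphic to $U\cong E_q^3$. Its Levi factor is $\GL_1(q)\times\mathrm{O}_3(q)$, and intersecting with $\SO_5(q)$ gives $C_{q-1}\times\SO_3(q)$ up to the sign twist that $\mathrm{O}_3=\pm\SO_3$ allows.

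In case (iii), the radical of the line stabilizer is $U$ of order $q^3$. It is the group of isometries acting trivially on $W$ and on $W^\perp/W$. Its commutator lands in the root group $\mathrm{Hom}(W^\perp/W\ \text{dual}\dots)\cong E_q$, so $Z(U)\cong E_q$ and $U/Z(U)\cong E_q^2$. The Levi factor is $\GL(W)\cong\GL_2(q)$.

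In cases (ii) and (iv), the stabilizer of $W$ in $\SO_5(q)$ is contained in $S(\mathrm O(W)\times\mathrm O(W^\perp))$ with $\dim W^\perp=3$. Here $\SO(W)\cong C_{q-1}$ for the hyperbolic line and $C_{q+1}$ for the anisotropic line. Taking determinant $1$ overall gives $(C_{q\mp1}\times\SO_3(q)).2$, where the $.2$ comes from a pair of reflections, one in each factor; in case (ii) this element swaps $P_1,P_2$.

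Cases (v) and (vi) are immediate: fixing $P$ means stabilizing $P^\perp$, and the determinant-one condition identifies the stabilizer with $\mathrm{O}(P^\perp)\cong\mathrm O_4^\pm(q)$ via $g\mapsto g|_{P^\perp}$, with the action on $P$ being $\det(g|_{P^\perp})$.

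The main obstacle is bookkeeping rather than ideas. First, the semilinear part of $X$ must be handled: it does not affect invariance of $W$ and $W^\perp$, since a $\sigma$-semilinear map scaling $Q$ still maps perpendicular spaces to perpendicular spaces. Second, the precise split/non-split structure of the $.2$ extensions and of the parabolics must be fixed, and here we defer to \cite{BHR407}*{Table~8.22} and \cite{Kantor1982Rank3}. The hypothesis $q\ge5$ is only needed to match the generic table entries; it excludes small-$q$ coincidences such as $\SO_3(3)$ being solvable and the table exceptions at $q=3$.
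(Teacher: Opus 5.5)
Your proof is correct, but it takes a different route from the paper. The paper gives no argument for Theorem~\ref{thm_SO_5(q)}; it simply cites Mitchell's classification (via $\POmega_5(q)\cong\mathrm{PSp}_4(q)$), Kantor--Liebler, and \cite{BHR407}*{Table~8.22}.

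Contrary to your opening sentence, your argument does not rest on the maximal-subgroup classification. Your reduction replaces $W$ by $W\cap W^\perp$ if that is nonzero, and otherwise by the smaller of $W,W^\perp$. It uses only that semisimilarities preserve perpendicularity and that the Witt index is $2$. It therefore shows directly that every reducible $X\le\Gamma\mathrm O_5(q)$, semilinear elements included, stabilizes one of the six configurations.

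The stabilizer shapes then follow from the standard parabolic descriptions and from $S(\mathrm O(W)\times\mathrm O(W^\perp))$. Orders confirm them: for instance, the singular-point and totally-singular-line stabilizers both have order $|\SO_5(q)|/\bigl((q+1)(q^2+1)\bigr)=q^4(q-1)(q^2-1)$. Your deferral to the tables for split/non-split questions is therefore unnecessary, since the statement only asserts containment in groups of the displayed shapes. Your route also shows that $q\ge5$ is irrelevant to these containments; it matters only for maximality, which the paper never uses.

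In short, the citation buys brevity plus extra structural information. Your route buys a self-contained, elementary proof.

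Some minor clean-ups:
\begin{itemize}
\item In case~(i) there is no ``sign twist'': $\det\operatorname{diag}(\lambda,A,\lambda^{-1})=\det A$ forces $A\in\SO_3(q)$.
\item In case~(iii), $Z(U)$ is the one-parameter group of maps $1+N$ with $N(W^\perp)=0$ and $N$ inducing an alternating map $V/W^\perp\cong W^*\to W$, i.e.\ it corresponds to $\wedge^2W$.
\item With the paper's convention $B(\bfv,\bfv)=2Q(\bfv)$, the space $P^\perp$ has plus type exactly when $2Q(\bfv)\operatorname{disc}(Q)\in\square_q$.
\end{itemize}
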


We will use the following consequence of Dickson's classification; see \cite{Dickson1904}*{Chapter~XII} and \cite{King2005Survey}. Recall that $G^{(\infty)}$ is the last term of the derived series of a group $G$.
\begin{lemma}\label{lem_subPSL}
Let $H$ be a non-solvable subgroup of $\PSL_2(q)$, where
$q=p^e$ is a prime power. Then one of the following holds:
	\begin{enumerate}
		\item[(1)] $H^{(\infty)}\cong \PSL_2(q_0)$ with $q=q_0^m$ for some positive
		integer $m$;
		\item[(2)] $H^{(\infty)}\cong A_5$ and $q\equiv\pm1\pmod{10}$.
	\end{enumerate}
\end{lemma}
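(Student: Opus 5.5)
The plan is to read the lemma off Dickson's list of subgroups of $\PSL_2(q)$ (\cite{Dickson1904}*{Chapter~XII}, \cite{King2005Survey}) and then compute the last term of the derived series for each non-solvable class. Dickson's theorem says that every subgroup of $\PSL_2(q)$, $q=p^e$, is isomorphic to one of the following:
\begin{enumerate}
\item[(a)] a cyclic or dihedral group;
\item[(b)] a subgroup of a Borel subgroup $E_q:C_{(q-1)/\gcd(2,q-1)}$, that is, of the form $E_p^k:C_m$;
\item[(c)] $A_4$, $S_4$ or $A_5$;
\item[(d)] $\PSL_2(q_0)$ or $\PGL_2(q_0)$, where $q=q_0^m$.
\end{enumerate}
The families in (a) and (b), together with $A_4$ and $S_4$, are solvable. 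A non-solvable $H$ is therefore isomorphic to $A_5$, to $\PSL_2(q_0)$ or to $\PGL_2(q_0)$. In the last two cases $q_0\geq4$, since $\PSL_2(2)$, $\PSL_2(3)$, $\PGL_2(2)$ and $\PGL_2(3)$ are solvable.

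Next I compute $H^{(\infty)}$ in each case.
\begin{itemize}
\item For $q_0\geq4$, $\PSL_2(q_0)$ is simple and hence perfect, so $H^{(\infty)}=H$.
\item For $H\cong\PGL_2(q_0)$ with $q_0\geq4$, the derived subgroup is $\PSL_2(q_0)$, which is perfect. Hence $H^{(\infty)}\cong\PSL_2(q_0)$ with $q=q_0^m$.
\end{itemize}
Both outcomes are conclusion (1).

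It remains to treat $H\cong A_5$, which is perfect, so $H^{(\infty)}=H\cong A_5$. An element of order $5$ in $\PSL_2(q)$ exists only if $5$ divides $|\PSL_2(q)|$, which forces $p=5$ or $q^2\equiv1\pmod 5$.
\begin{itemize}
\item If $p=5$, then $A_5\cong\PSL_2(5)$ and $q=5^e$, so conclusion (1) holds with $q_0=5$.
\item If $p=2$, then $q^2=4^e\equiv1\pmod5$ forces $e$ to be even. Hence $q=4^{e/2}$, and $A_5\cong\PSL_2(4)$ again gives conclusion (1), with $q_0=4$.
\item If $p$ is odd and $p\ne5$, then $q\equiv\pm1\pmod5$. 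Since $q$ is odd, this is equivalent to $q\equiv\pm1\pmod{10}$, which is conclusion (2).
\end{itemize}

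The only delicate point is this bookkeeping in the $A_5$ case: the cases $p=2$ and $p=5$, where $A_5$ is itself a $\PSL_2(q_0)$, must be routed into conclusion (1). Everything else is a direct reading of Dickson's list together with the elementary facts that $\PSL_2(q_0)$ is simple for $q_0\geq4$ and that $\PGL_2(q_0)'=\PSL_2(q_0)$.
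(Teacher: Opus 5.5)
Your proposal is correct and follows the paper's approach. The paper gives no written proof and simply cites Dickson's classification (\cite{Dickson1904}*{Chapter~XII}, \cite{King2005Survey}); your argument is exactly that reading of Dickson's list, with the $p=2$ and $p=5$ instances of $A_5$ correctly routed into conclusion~(1).
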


\begin{lemma}[\cite{Nelson-thesis}*{Lemma 2.5.2}]\label{Nelson-lemma}
Let $X$ act transitively on a set $\cB$ of size $q+1$, and let
$N\unlhd X$. Then, for every $P\in\cB$, we have $[N:N_P]\mid (q+1)$, where $N_P$ is the stabilizer of $P$ in $N$.
\end{lemma}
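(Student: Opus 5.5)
The statement to prove is Lemma~\ref{Nelson-lemma}: if $X$ is transitive on $\cB$ with $|\cB|=q+1$ and $N\unlhd X$, then $[N:N_P]\mid(q+1)$ for every $P\in\cB$. The plan is to use the standard fact that the orbits of a normal subgroup of a transitive group form a system of blocks, all of the same size.

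First, I show that $X$ permutes the $N$-orbits on $\cB$. Fix $P\in\cB$ and $x\in X$. Since $N$ is normal,
\[
x(P^N)=\{xnP:n\in N\}=\{(xnx^{-1})\,xP:n\in N\}=(xP)^N .
\]
So $X$ maps each $N$-orbit onto an $N$-orbit. Because $X$ is transitive on $\cB$, every point of $\cB$ has the form $xP$. Hence every $N$-orbit equals $x(P^N)$ for some $x\in X$. In particular, all $N$-orbits have the same size $|P^N|$.

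Next, the $N$-orbits partition $\cB$ into, say, $k$ classes of equal size. This gives
\[
q+1=|\cB|=k\,|P^N| .
\]
By the orbit--stabilizer theorem, $|P^N|=[N:N_P]$. Therefore $[N:N_P]$ divides $q+1$. Since $P$ was arbitrary, the same holds for every $P\in\cB$.

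There is no real obstacle here. The only point needing care is the normality computation showing that $x$ carries $N$-orbits to $N$-orbits. Finiteness of $N$ is not needed, since the index equals the orbit length regardless.
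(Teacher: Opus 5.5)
Your proof is correct and follows the paper's approach: normality makes $X$ permute the $N$-orbits, so transitivity forces them all to have the common size $[N:N_P]$, which then divides $|\cB|=q+1$. You simply spell out the normality computation $x(P^N)=(xP)^N$ and the orbit--stabilizer step that the paper's two-line proof leaves implicit.
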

\begin{proof}
Since $N\unlhd X$ and $X$ is transitive on $\cB$, all $N$-orbits on
$\cB$ have the same size $[N:N_P]$. Their common size divides
$|\cB|=q+1$.
\end{proof}

By Dickson's classification \cite{Dickson1904}*{Chapter XII}, if $q> 5$, a subgroup
of $\PSL_2(q)$ with index dividing $q+1$ is either $\PSL_2(q)$ or
$E_q:C_{(q-1)/2}$; see also \cite{Huppert} and
\cite{Nelson-thesis}*{pp.~23--24}. Up to conjugacy, the full preimage of
$E_q:C_{(q-1)/2}$ in $\SL_2(q)$ consists of  matrices
\begin{equation}\label{eqn-sub_SL_2(q)}
\begin{pmatrix}
x & 0 \\
y & x^{-1}
\end{pmatrix},
\qquad x\in \F_{q}^{\times},\quad y\in\F_q.
\end{equation}
Such a subgroup is a Borel subgroup of $\SL_2(q)$. In addition, there is a natural homomorphism $\rho:\SL_2(q)\to\Omega_3(q)$ with kernel $\{\pm I_2\}$, which induces $\PSL_2(q)\cong\Omega_3(q)$. For $A=\begin{pmatrix}
	a & b \\
	c & d
	\end{pmatrix}\in\SL_2(q)$, we have
\begin{equation}\label{eqn_Omega3q}
\rho(A)=\begin{pmatrix}
	a^2 & 2ab & b^2\\
	ac & ad+bc & bd \\
	c^2 & 2cd & d^2
	\end{pmatrix}.
\end{equation}
The matrix $\rho(A)$ preserves the form $Q_{\mathrm{std}}(\bfv)=v_1v_3-v_2^2$ for $\bfv=(v_1,v_2,v_3)\in\F_q^3$.

%

\begin{lemma}\label{lem_short_omega3_orbits}
Let $q>5$ be an odd prime power. Define
$Q(x_1,\ldots,x_5)=x_1x_5+x_2x_4-x_3^2$ and set
$L_0:=\{\operatorname{diag}(1,A,1):A\in\Omega_3(q)\}\cong\Omega_3(q)$.
On the points of $\cQ(4,q)$, the only $L_0$-orbits of length at
most $q+1$ are the two fixed points $\la\bfe_1\ra$ and $\la\bfe_5\ra$,
together with the conic
$\{\la(0,\bfw,0)\ra: 0\ne\bfw\in\F_q^3,\, Q_{\mathrm{std}}(\bfw)=0\}$ of length $q+1$.
\end{lemma}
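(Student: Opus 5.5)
We need to classify all $L_0$-orbits on singular points. Write a vector as $\bfv=(x_1,\bfw,x_5)$ with $\bfw\in\F_q^3$. Then $Q(\bfv)=x_1x_5+Q_{\mathrm{std}}(\bfw)$, where $Q_{\mathrm{std}}(\bfw)=w_1w_3-w_2^2$ in the middle coordinates. The group $L_0$ fixes $\bfe_1$ and $\bfe_5$ vectorwise and acts on the middle $3$-space as $\Omega_3(q)=\rho(\SL_2(q))$. Hence the quantities $x_1$, $x_5$ and $Q_{\mathrm{std}}(\bfw)$ are $L_0$-invariants of the vector, and the orbit of the projective point $\la\bfv\ra$ is the image of the vector orbit of $\bfv$ and of its scalar multiples. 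My plan is to split the singular points according to whether $\bfw=0$, and then whether $(x_1,x_5)=(0,0)$, and in each case to bound the orbit length from below by the size of the vector orbit $\bfw^{\Omega_3(q)}$ in the $3$-dimensional space.

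First, if $\bfw=0$ then $x_1x_5=0$, so the point is $\la\bfe_1\ra$ or $\la\bfe_5\ra$, and both are fixed. Second, if $x_1=x_5=0$ then $Q_{\mathrm{std}}(\bfw)=0$, and the point lies on the conic $\{\la(0,\bfw,0)\ra\}$. Via \eqref{eqn_Omega3q} this conic is the image of $\PG(1,q)$ under $\la(a,c)\ra\mapsto\la(a^2,ac,c^2)\ra$. Since $\SL_2(q)$ is transitive on $\PG(1,q)$, the conic is a single orbit of length $q+1$.

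The remaining case is $\bfw\neq0$ and $(x_1,x_5)\ne(0,0)$. Here we want orbit length $>q+1$. A scalar $\lambda$ with $\lambda\bfv\in\bfv^{L_0}$ must fix the nonzero coordinate among $x_1,x_5$, so $\lambda=1$. Thus the point orbit has the same size as the vector orbit $\bfw^{\Omega_3(q)}$. It therefore suffices to know the $\Omega_3(q)$-orbit sizes of nonzero vectors. Under the isomorphism with $\SL_2(q)$ acting on binary quadratic forms, these sizes are standard:
\begin{itemize}
\item a nonzero singular vector has orbit size $(q^2-1)/2$, since the stabilizer in $\PSL_2(q)$ of a vector on the conic is the unipotent radical times $\{\pm1\}$, of order $q$;
\item a nonsingular vector $\bfw$ has orbit size $|\PSL_2(q)|/|\Stab|$. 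The stabilizer is that of $\la\bfw\ra$ intersected with the kernel of the sign action, which is the dihedral-type group $D_{q\mp1}$ of order $q\mp1$. So the orbit size is $q(q\pm1)/2$.
\end{itemize}
I would double-check these numbers against the count $q^2$ of vectors of each nonzero norm in $\F_q^3$: since $\Omega_3$ has index $2$ in $\SO_3$, each norm class splits into at most $2$ orbits, giving sizes at least $q(q-1)/2$. For $q>5$ we have $q(q-1)/2>q+1$, because $q^2-3q-2>0$ for $q\ge 5$. Likewise $(q^2-1)/2>q+1$ for $q>3$. So no such orbit has length at most $q+1$.

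The main obstacle is the precise orbit-size count in the nonsingular case, specifically the claim that $\Omega_3(q)$ splits each $\SO_3(q)$-orbit into at most two pieces. I would justify this directly: $\SO_3(q)$ is transitive on vectors of a given nonzero norm by Witt's theorem, and $[\SO_3(q):\Omega_3(q)]=2$. This bound alone, orbit size $\ge q(q-1)/2$, suffices, so exact stabilizers are unnecessary. Note that $q>5$ is used only to make these inequalities strict. The boundary $q=5$ gives $q(q-1)/2=10>6$ anyway, but we keep the hypothesis as stated.
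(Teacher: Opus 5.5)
Your argument is correct and follows the paper's proof essentially step for step. It uses the same split ($\bfw=0$, $(x_1,x_5)=(0,0)$, and the rest), the same observation that the point stabilizer equals the stabilizer of $\bfw$, the same length $(q^2-1)/2$ for singular $\bfw$, and the same bound $q(q-1)/2$ for non-singular $\bfw$; you get that bound via Witt's theorem and $[\SO_3(q):\Omega_3(q)]=2$ rather than from projective orbit lengths, which is equally fine. Two numerical slips need fixing, though the conclusion is unaffected: a non-zero norm class in the $3$-space has $q^2\pm q$ vectors, not $q^2$ (so the bound is $(q^2-q)/2$), and the stabilizer of a non-singular vector in $\Omega_3(q)$ is cyclic of order $(q\mp1)/2$, not the dihedral point stabilizer of order $q\mp1$, so the vector orbits actually have length $q(q\pm1)$.
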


\begin{proof}
Suppose that $\bfv=(x_1,\bfw,x_5)$ is a vector in $V$ with
$\bfw\in\F_q^3$. If $x_1=x_5=0$, the three projective $\Omega_3(q)$-orbits have lengths
$q+1$, $q(q-1)/2$, and $q(q+1)/2$; see
\cite{Hirschfeld1998}*{p.~170} and \cite{KL129}*{Lemma 2.10.5}. The first
orbit is exactly this conic.

Suppose that $(x_1,x_5)\ne(0,0)$. A group element $g\in L_0$ stabilizing the projective
point $\la(x_1,\bfw,x_5)\ra$ must fix $\bfw$ itself. If $\bfw$ is nonzero and singular, its stabilizer in $\Omega_3(q)$ has order $q$, so its orbit has length $(q^2-1)/2$. If $\bfw$ is non-singular, its orbit is at least as long as the corresponding projective orbit, and therefore has length at least $q(q-1)/2$, see \cite{KL129}*{Lemma 2.10.5}. Both bounds exceed $q+1$. Finally, if
$\bfw=0$, the condition $Q(\bfv)=0$ implies that $x_1x_5=0$, yielding precisely the two fixed
points in the statement.
\end{proof}

\subsection{{Quadratic-character sum estimates}}

\begin{lemma}\label{lem_xisquare}
Let $\xi=\omega^{q-1}$, where $\omega$ is a primitive element of $\F_{q^2}$. Then for $1\leq k\leq q$,
	\[
	 \xi^{k}+\xi^{-k}-2=(\xi^{k/2}-\xi^{-k/2})^2\in\begin{cases}
	  \square_q,&\text{if $k$ is odd},\\
	  \blacksquare_q,&\text{if $k$ is even}.
	 \end{cases}
	\]
Here $\xi^{k/2}=\omega^{(q-1)k/2}$.
\end{lemma}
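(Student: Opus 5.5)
The identity $\xi^{k}+\xi^{-k}-2=(\xi^{k/2}-\xi^{-k/2})^2$ is immediate by expanding the square, since $\xi^{k/2}\xi^{-k/2}=1$. So the whole task is to decide the quadratic character in $\F_q$ of $\xi^k+\xi^{-k}-2$. I will determine when the element $u:=\xi^{k/2}-\xi^{-k/2}$ of $\F_{q^2}$ lies in $\F_q$: its square is then a nonzero square of $\F_q$, and otherwise $u^2$ is a nonsquare of $\F_q$.

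First I check that $\xi^k+\xi^{-k}-2$ is a nonzero element of $\F_q$. Since $\xi^{q+1}=\omega^{q^2-1}=1$, we have $\xi^{q}=\xi^{-1}$. Hence $\xi^k+\xi^{-k}=\xi^k+(\xi^k)^q$ is a trace and lies in $\F_q$. The element $\xi$ has order exactly $q+1$, so for $1\le k\le q$ we get $\xi^k\ne 1$. It follows that $u\neq0$, because $u=0$ would force $\xi^k=1$.

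Next write $\eta:=\xi^{k/2}=\omega^{(q-1)k/2}$, which makes sense because $q-1$ is even. Then $\eta^q=\omega^{q(q-1)k/2}$. Since $q(q-1)k/2=(q^2-1)k/2-(q-1)k/2$, we obtain $\eta^q=\omega^{(q^2-1)k/2}\eta^{-1}=(-1)^k\eta^{-1}$, using $\omega^{(q^2-1)/2}=-1$. Therefore
\[
u^q=\eta^q-\eta^{-q}=(-1)^k(\eta^{-1}-\eta)=(-1)^{k+1}u .
\]

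The two cases now follow. If $k$ is odd, then $u^q=u$, so $u\in\F_q^\times$ and $u^2\in\square_q$. If $k$ is even, then $u^q=-u\ne u$ (as $q$ is odd and $u\neq0$), so $u\notin\F_q$. Yet $u^2\in\F_q^\times$, and the only square roots of $u^2$ in $\F_{q^2}$ are $\pm u$, neither of which lies in $\F_q$. Hence $u^2\in\blacksquare_q$.

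The only step needing care is the sign $\omega^{(q^2-1)/2}=-1$ together with the exponent bookkeeping in computing $\eta^q$. There is no real obstacle.
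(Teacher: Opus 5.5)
Your proof is correct and follows essentially the same route as the paper: you also show $(\xi^{k/2})^q=(-1)^k\xi^{-k/2}$, deduce $u^q=(-1)^{k+1}u$ with $u\ne 0$, and split on the parity of $k$. The only difference is cosmetic: in the even case the paper concludes via Euler's criterion, $(u^2)^{(q-1)/2}=u^{q-1}=-1$, while you argue that the only square roots $\pm u$ of $u^2$ lie outside $\F_q$.
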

\begin{proof}
Since $\xi=\omega^{q-1}$, we have $\xi^{(q+1)/2}=-1$ and
$\xi^{kq/2}=(-1)^{k}\xi^{-k/2}$. Thus
\[
(\xi^{k/2}-\xi^{-k/2})^q
=(-1)^{k+1}(\xi^{k/2}-\xi^{-k/2}).
\]
Put $u=\xi^{k/2}-\xi^{-k/2}$. Then $u\ne0$ because $1\leq k\leq q$.
If $k$ is odd, then $u^q=u$, so $u\in\F_q$ and $u^2\in\square_q$. If
$k$ is even, then $u^q=-u$, so
$(u^2)^{(q-1)/2}=u^{q-1}=-1$ and $u^2\in\blacksquare_q$.
\end{proof}

We recall the following Weil bound, which will be used in Section \ref{sec:4}.

\begin{thm}[ \cite{Finitefield}*{Theorem 5.41}]\label{thm_FFquadsum}
{Let $\psi: \F_{q}^{\times}\to\mathbb C$ be a multiplicative character of order $m$, extended to $\F_q$  by setting $\psi(0)=0$. Let $f\in\F_q[x]$ be monic and not an $m$-th power, and let $d$ be the number of distinct roots of $f$
in its splitting field. Then, for every $a\in\F_q$,}
\[
\left|\sum_{x\in\F_q}\psi(af(x))\right|\leq(d-1)\sqrt q.
\]
	\end{thm}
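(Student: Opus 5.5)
The statement is the Weil bound for a character sum $\sum_{x}\psi(af(x))$. I would prove it by reducing to a Riemann hypothesis statement for a Kummer curve.

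\emph{Reductions.} If $a=0$ the sum is $0$, so assume $a\neq0$. Then $\psi(af(x))=\psi(a)\psi(f(x))$ and $|\psi(a)|=1$, so it suffices to bound $S:=\sum_{x\in\F_q}\psi(f(x))$. Factor $f=\prod_{i=1}^{r} g_i^{e_i}$ over $\F_q$ into distinct monic irreducibles. Only the residues $e_i \bmod m$ affect $\psi(f(x))$, except at roots of $f$; those contribute $0$ on both sides. So we may assume $0\le e_i<m$, drop the factors with $e_i=0$, and note that $f$ being not an $m$-th power means some $e_i\not\equiv0\pmod m$. This step can only lower $d$, so bounding the reduced sum by $(d'-1)\sqrt q$ with $d'\le d$ suffices.

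\emph{Main step.} Consider the $L$-function
\[
L(T)=\exp\Bigl(\sum_{n\ge1}S_n T^n/n\Bigr),\qquad S_n=\sum_{x\in\F_{q^n}}\psi(N_{\F_{q^n}/\F_q}(f(x))).
\]
I would show two facts:
\begin{enumerate}
\item[(a)] $L(T)$ is a polynomial of degree at most $d-1$, with $S_1=S$ equal to minus the sum of its roots (inverse zeros).
\item[(b)] Each root $\omega_j$ satisfies $|\omega_j|\le\sqrt q$.
\end{enumerate}
Then $|S|\le\sum_j|\omega_j|\le(d-1)\sqrt q$.

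For (a), write $L(T)=\sum_{h\ \text{monic}}\lambda(h)T^{\deg h}$, where $\lambda(h)=\psi(\mathrm{Res}(h,f))$ (up to sign conventions) is a multiplicative character of $\F_q[x]$ modulo $\mathrm{rad}(f)$. Non-triviality of $\lambda$ makes the coefficient sums vanish in degrees $\ge\deg\mathrm{rad}(f)$. Working over the splitting field if needed handles $d$ versus $\deg\mathrm{rad} f$; the degree bound is actually the number of distinct roots of $f$ over $\overline{\F}_q$, since these behave alike. This gives degree at most $d-1$ after accounting for the trivial factor at infinity.

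\emph{Main obstacle.} Step (b) is the deep part. I would identify $L(T)$ as a factor of the numerator of the zeta function of the smooth projective model of the curve $y^m=f(x)$ over $\F_q$. The characters $\psi^k$ decompose $\#C(\F_{q^n})$, and the Riemann hypothesis for curves (Weil, or the elementary Stepanov–Bombieri method) gives $|\omega_j|=\sqrt q$ for the relevant inverse roots. Alternatively, Stepanov's method proves $|S_n|\le C q^{n/2}$ for all $n$, and the rationality of $L$ then forces $|\omega_j|\le\sqrt q$. Since the paper cites this as a standard result (\cite{Finitefield}*{Theorem 5.41}), I would invoke that reference rather than reprove the Riemann hypothesis.
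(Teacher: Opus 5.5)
Your proposal is correct and matches the paper, which gives no proof of its own and simply quotes Theorem~5.41 of Lidl--Niederreiter, as you ultimately do; your sketch (the $L$-function of $h\mapsto\psi(\mathrm{Res}(h,f))$ modulo $\mathrm{rad}(f)$ is a polynomial of degree at most $d-1$, and the Riemann hypothesis bounds its inverse roots by $\sqrt q$) is the standard argument behind that reference. One small slip: when you drop a factor $g_i$ with $m\mid e_i$, its roots in $\F_q$ contribute $0$ to the original sum but a term of modulus $1$ to the reduced one, so they are not ``$0$ on both sides''; this is harmless, since there are at most $d-d'$ such roots and $(d'-1)\sqrt q+(d-d')\le(d-1)\sqrt q$.
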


\begin{lemma}\label{lem_caseI_torus_sum}
{Let $\xi=\omega^{q-1}$ be as in Lemma \ref{lem_xisquare}, let $k$ be an
even divisor of $q+1$, and put  $Y:=\big\la\xi^k\big\ra$. Suppose that
$\alpha\in\F_{q^2}^{\times}$, $\beta\in\F_q$, and
$\beta^2-4\alpha\alpha^q\ne0$. Then
\[
\left|
\sum_{y\in Y}
\chi\bigl(\beta+\alpha y+\alpha^qy^{-1}\bigr)
\right|
\leq
{\frac{(2k-1)\sqrt q+1}{k}}
<
2\sqrt q.
\]}
\end{lemma}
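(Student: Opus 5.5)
We need to bound the sum over $Y$, a cyclic subgroup of $\mu_{q+1}=\la\xi\ra$ of order $(q+1)/k$. The plan is to detect membership in $Y$ with characters of $\mu_{q+1}$, or equivalently to parametrize $\mu_{q+1}$ by $\F_q\cup\{\infty\}$ and apply Theorem~\ref{thm_FFquadsum}.

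\textbf{Step 1: reduction to a sum over $y^k$.} The map $z\mapsto z^k$ is a $k$-to-$1$ map from $\mu_{q+1}$ onto $Y$. Hence
\[
S:=\sum_{y\in Y}\chi\bigl(\beta+\alpha y+\alpha^q y^{-1}\bigr)=\frac1k\sum_{z\in\mu_{q+1}}\chi\bigl(\beta+\alpha z^k+\alpha^q z^{-k}\bigr).
\]
For $z\in\mu_{q+1}$ we have $z^q=z^{-1}$, so $\alpha z^k+\alpha^q z^{-k}=\alpha z^k+(\alpha z^k)^q$ lies in $\F_q$. Thus the argument of $\chi$ is $\mathrm{Tr}$-type and lies in $\F_q$.

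\textbf{Step 2: rational parametrization.} Fix $\delta\in\F_{q^2}\setminus\F_q$. The map $t\mapsto z=(t-\delta^q)/(t-\delta)$ is a bijection from $\F_q$ onto $\mu_{q+1}\setminus\{1\}$, and $t=\infty$ gives $z=1$. Set $u=t-\delta$ and $\bar u=t-\delta^q$. Then
\[
\beta+\alpha z^k+\alpha^q z^{-k}=\frac{\beta(u\bar u)^k+\alpha\bar u^{2k}+\alpha^q u^{2k}}{(u\bar u)^k}.
\]
The denominator $(u\bar u)^k=N(t-\delta)^k$ is a nonzero square in $\F_q$, since $k$ is even. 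So $\chi$ of the expression equals $\chi(F(t))$, where
\[
F(t)=\beta (u\bar u)^k+\alpha\bar u^{2k}+\alpha^q u^{2k}\in\F_q[t]
\]
has degree at most $2k$. Consequently
\[
kS=\chi(\beta+\alpha+\alpha^q)+\sum_{t\in\F_q}\chi(F(t)),
\]
where the first term, coming from $t=\infty$, has absolute value at most $1$.

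\textbf{Step 3: applying Weil.} We need $F$ not to be a constant times a square. Then Theorem~\ref{thm_FFquadsum}, applied to $c^{-1}F$ with $c$ its leading coefficient, gives $|\sum_t\chi(F(t))|\le(d-1)\sqrt q\le(2k-1)\sqrt q$. Combined with Step 2, this yields $|S|\le((2k-1)\sqrt q+1)/k$.

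The strict inequality $((2k-1)\sqrt q+1)/k<2\sqrt q$ is equivalent to $1<\sqrt q$, which holds. If $\deg F<2k$ (the leading coefficient $\beta+\alpha+\alpha^q$ vanishes), $d$ only gets smaller and the bound still holds. $F$ is not identically zero: at $t=\delta$ we have $u=0$, so $F(\delta)=\alpha(\delta-\delta^q)^{2k}\ne0$ when $\alpha\ne0$. The case $\alpha=0$ is excluded by $\beta^2\ne4\alpha^{q+1}$ only if $\beta\ne0$, in which case the sum is trivially $|Y|\chi(\beta)$. That case needs separate care, or is excluded in the application; I would note it.

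\textbf{Main obstacle: the non-square condition.} I would work in the variable $w=u/\bar u=z^{-1}$ over $\overline{\F}_q$, where
\[
F=\bar u^{2k}\bigl(\alpha+\beta w^k+\alpha^q w^{2k}\bigr).
\]
So, up to the square factor $\bar u^{2k}$ and the Möbius change of variable, it suffices that $g(w)=\alpha^q w^{2k}+\beta w^k+\alpha$ is not a square in $\overline{\F}_q[w]$. Viewed as a quadratic in $s=w^k$, the polynomial $g$ has discriminant $\beta^2-4\alpha^{q+1}\ne0$. Hence its two roots $s_1\ne s_2$ are distinct and nonzero, the nonvanishing following from $\alpha\ne0$.

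Each equation $w^k=s_i$ has $k$ distinct roots, since $p\nmid k$ because $k\mid q+1$. Therefore $g$ is squarefree of degree $2k$, and in particular not a square.

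The subtlety is that a Möbius transformation could in principle map a root to $\infty$. This corresponds to the degree drop handled above, and it does not create square factors: $F(t)$ is, up to an $\F_q$-scalar, $\prod(t-t_j)$ with the $t_j$ the distinct preimages of the roots $w_j$, and each such factor appears once.
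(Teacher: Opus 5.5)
Your proof is correct and follows essentially the same route as the paper. Both arguments use the $k$-to-one reduction to $\la\xi\ra$ and parametrize $\la\xi\ra$ by $\F_q\cup\{\infty\}$; the paper takes $\delta^q=-\delta$, so $x(t)=(t+\delta)/(t-\delta)$ and $f$ coincides with your $F$. Both use that the denominator is a square because $k$ is even, count roots through the quadratic $\alpha^q s^2+\beta s+\alpha$ (distinct nonzero roots, with $p\nmid k$), and treat the $t=\infty$ term separately. Your concern about $\alpha=0$ is unnecessary, because the hypothesis $\alpha\in\F_{q^2}^{\times}$ already excludes that case.
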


\begin{proof}
{The group $\langle\xi\rangle$ has order $q+1$.
For each $y\in\langle\xi\rangle$, we have $y^q=y^{-1}$, so
$\beta+\alpha y+\alpha^qy^{-1}\in\F_q$.} The map $\big\la\xi\big\ra\to Y$,
$x\mapsto x^k$, is $k$-to-one. Hence
\[
\sum_{y\in Y}\chi\bigl(\beta+\alpha y+\alpha^qy^{-1}\bigr)
=
\frac{1}{k}
\sum_{x\in\big\la\xi \big\ra}
\chi\bigl(\beta+\alpha x^k+\alpha^qx^{-k}\bigr).
\]

Choose $\delta\in\F_{q^2}^{\times}$ with
$\delta^q=-\delta$ and $\delta^2\in\blacksquare_q$. The map
\[
x(t)=\frac{t+\delta}{t-\delta},
\qquad t\in\F_q\cup\{\infty\},
\]
is a bijection onto $\big\la\xi\big\ra$, with $t=\infty$ corresponding to $x=1$.
Moreover,
$x(t)^q=x(t)^{-1}$ for all $t\in\F_q$. For each $t\in\F_q$, we have
\[
\beta+\alpha x(t)^k+\alpha^qx(t)^{-k}
=\frac{f(t)}{(t^2-\delta^2)^k},
\]
where
$f(t):=\alpha(t+\delta)^{2k}+\beta(t^2-\delta^2)^k
+\alpha^q(t-\delta)^{2k}$. Expanding the first and third terms gives
$\alpha(t+\delta)^{2k}+\alpha^q(t-\delta)^{2k}
=\sum_{i=0}^{2k}\binom{2k}{i}
\bigl(\alpha\delta^{2k-i}+\alpha^q(-\delta)^{2k-i}\bigr)t^i$,
and every coefficient lies in $\F_q$. Hence $f(t)\in\F_q[t]$. Since
$\delta^2\in \blacksquare_q$, $t^2-\delta^2\ne0$ for
$t\in\F_q$. Since $k$ is even, $(t^2-\delta^2)^k$ is a square. Therefore
$\chi(\beta+\alpha x(t)^k+\alpha^qx(t)^{-k})=\chi(f(t))$.

To use Weil's bound, it remains to determine the number of distinct roots of $f(t)$. Away from
$t=\pm\delta$, put $U=x(t)^k$. Then $f(t)=0$ is equivalent to
$\alpha U^2+\beta U+\alpha^q=0$. By hypothesis, its discriminant
$\beta^2-4\alpha^{q+1}$ is nonzero, so the equation has two distinct
nonzero roots. Since $k\mid (q+1)$, we have $p\nmid k$, and each root
therefore gives $k$ distinct solutions of $x(t)^k=U$.  Neither
$t=\delta$ nor $t=-\delta$ is a root of $f$. The point
$t=\infty$ corresponds to one of these $2k$ solutions precisely
when $\alpha+\beta+\alpha^q=0$. Thus the number $d$ of distinct finite roots of
$f$ is $2k$ if $\alpha+\beta+\alpha^q\ne0$, and $2k-1$ otherwise.
All these roots are simple, so $f$ is not a square in $\F_q[t]$.
After multiplying $f$ by a nonzero element of $\F_q$ to make it monic, Theorem \ref{thm_FFquadsum} gives
\[
\left|\sum_{t\in\F_q}\chi(f(t))\right|
\leq(d-1)\sqrt q\leq(2k-1)\sqrt q.
\]
The point $t=\infty$ contributes
$\chi(\alpha+\beta+\alpha^q)$, whose absolute value is at most $1$.
Consequently,
\[
\left|
\sum_{x\in\big\la\xi \big\ra}\chi\bigl(\beta+\alpha x^k+\alpha^qx^{-k}\bigr)
\right|
\leq(2k-1)\sqrt q+1.
\]
Dividing by $k$ proves the first bound. Finally,
$2\sqrt q-((2k-1)\sqrt q+1)/k=(\sqrt q-1)/k>0$, which gives the desired
inequality.
\end{proof}
\subsection{Representations of \texorpdfstring{$A_5$}{A5} and
\texorpdfstring{$2.A_5$}{2.A5}}\label{sec:2_A5}
We record explicit absolutely irreducible representations of $A_5$ and $2.A_5$. 
Assume that $q\equiv\pm1\pmod{10}$. Let $z$ be a primitive fifth root
of unity in $\overline{\F}_q$. Since $5\mid q^2-1$, we have
$z\in\F_{q^2}$. Put $b_5=z+z^4$. The congruence condition on $q$ implies that $b_5^q=b_5$, and hence $b_5\in\F_q$. Moreover,
$b_5^2+b_5-1=0$ and $z^2+z^3=-1-b_5$.
\medskip

\noindent (1) Suppose in addition that $q\equiv-1\pmod{10}$. Then
$z^q=z^{-1}=z^4$. We use the following realization of
$\SL_2(q)$ as $\SU_2(q)$:
\[
\SU_2(q)=\left\{
\begin{pmatrix}\alpha&\beta\\-\beta^q&\alpha^q\end{pmatrix}:
\alpha,\beta\in\F_{q^2},\quad
\alpha^{q+1}+\beta^{q+1}=1
\right\}.
\]
Define
\begin{equation}\label{eqn_2A5}
A_{\mathrm u}:=\begin{pmatrix}
	(z-z^4)^{-1} & (z^2-z^3)^{-1}\\
	(z^2-z^3)^{-1} & (z^4-z)^{-1}
\end{pmatrix},
\qquad
B_{\mathrm u}:=\begin{pmatrix}
z & 0 \\
0 &z^4
\end{pmatrix}.
\end{equation}
The identity $\sum_{i=1}^{4}z^i=-1$ gives
$\det(A_{\mathrm u})=\det(B_{\mathrm u})=1$,
$A_{\mathrm u}^2=-I_2$, $B_{\mathrm u}^5=I_2$, and
$(A_{\mathrm u}B_{\mathrm u})^3=-I_2$.
Hence
$\la A_{\mathrm u},B_{\mathrm u}\ra\cong 2.A_5\leq\SU_2(q)\cong\SL_2(q)$.\medskip

\noindent(2) Since $b_5^2+b_5-1=0$, the equation $u^2+u+v^2-b_5v+1=0$ is equivalent to $(u+\frac{1}{2})^2+(v-\frac{b_5}{2})^2=-\frac{1}{2}-\frac{b_5}{4}$. By Theorem \ref{thm_FFquadsum}, the number of solutions of this equation is at least $q-\sqrt{q}>0$. Choose a suitable pair $(u,v)\in\F_q^2$, and define
\[
A_{\mathrm s}:=\begin{pmatrix}0&-1\\1&0\end{pmatrix},
\qquad
B_{\mathrm s}:=\begin{pmatrix}u&v\\v-b_5&-u-1\end{pmatrix}.
\]
A direct computation shows that  $A_{\mathrm s}^2=-I_2$,
$B_{\mathrm s}^3=I_2$, and $(A_{\mathrm s}B_{\mathrm s})^5=I_2$. Hence
$\la A_{\mathrm s},B_{\mathrm s}\ra\cong 2.A_5\leq\SL_2(q)$.\medskip

\noindent{(3) Let}
	\[ A_3:= \begin{pmatrix}
-1 & 0 & 1+b_5 \\
0 & -1 & 1+b_5 \\
0 & 0 &1
\end{pmatrix},
\qquad B_3:=\begin{pmatrix}
0 &0 & 1\\
1 & 0 & 0\\
0 & 1 &0
\end{pmatrix},
\qquad A_3B_3=\begin{pmatrix}
0 & 1+b_5  & -1 \\
-1 & 1+b_5 &  0 \\
0 & 1 & 0
\end{pmatrix}.\]
Here $A_3^2=B_3^3=(A_3B_3)^5=I_3$, so
$\la A_3,B_3\ra\cong A_5\leq\SL_3(q)$. This representation preserves the
symmetric bilinear form with Gram matrix
\[
\begin{pmatrix}
-1-2b_5 & 1 & 1\\
1 & -1-2b_5 & 1\\
1 & 1 & -1-2b_5
\end{pmatrix}.
\]
The corresponding quadratic form on $\F_q^3$ is
\begin{equation}\label{eqn_QA5}
Q_{A_5}(\bfv)=v_1v_2+v_1v_3+v_2v_3
-\left(\frac12+b_5\right)(v_1^2+v_2^2+v_3^2),
\qquad \bfv=(v_1,v_2,v_3)\in\F_q^3.
\end{equation}
Its associated bilinear form is
$B_{A_5}(\bfv,\bfw)=Q_{A_5}(\bfv+\bfw)-Q_{A_5}(\bfv)-Q_{A_5}(\bfw)$.
The subgroups isomorphic to $A_5$ form two conjugacy classes in
$\Omega_3(q)$, and these classes are fused in $\SO_3(q)$.
Thus it suffices to fix one representative for the arguments below;
see \cite{BHR407}*{Section~4, Table~4.4}.

\section{The \texorpdfstring{$\PSL_2(q_0)$}{PSL}-case}\label{sec:3}
Throughout this section, assume that $q>5$ and that $\overline G$ is reducible, non-solvable, and
transitive on the BLT-set $\cB$. By \eqref{eqn_orthogonal_core}, the quotient $\overline G/\pi(\Gso)$ is solvable, so $\Gso$ is non-solvable.  
By Theorem \ref{thm_SO_5(q)} and Lemma \ref{lem_subPSL}, we split the discussion into two cases:
\begin{itemize}
    \item[(1)]  $\Gso$ has at least one non-abelian composition factor isomorphic to $\PSL_2(q_0)$, where $q_0$ is a power of $p$;
\item[(2)] All non-abelian composition factors of $G$ are $A_5$, where $p\ne 5$.
\end{itemize}
 This section treats the former case \big(i.e., Case \textup{(1)}\big), and Section \ref{sec:4} treats the latter one. We refer to these two conditions as the $\PSL_2(q_0)$-condition and $A_5$-condition in the sequel. The transitivity condition implies $(q+1) \mid|\overline G|$, and
\eqref{eqn_orthogonal_core} therefore yields
\begin{equation}\label{eqn_divisiblecond}
 \frac{q+1}{\gcd(e,q+1)}\bigm| |\Gso|.
\end{equation}

\begin{lemma}\label{lem_primitive}
Let $q=p^e$. If $e=1$, then $(q+1)\mid|\Gso|$. If $e>1$, then $\Phi_{2e}^*(p)\mid|\Gso|$. In the latter case (i.e., $e >1$), for every prime divisor
$r$ of $\Phi_{2e}^*(p)$, one may choose a composition factor $S$ of
$\Gso$ such that $r\mid|S|$; if this factor is isomorphic to
$\PSL_2(p^t)$, then $e\mid t$.
\end{lemma}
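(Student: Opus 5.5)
Everything starts from the divisibility \eqref{eqn_divisiblecond}: $\frac{q+1}{\gcd(e,q+1)}$ divides $|\Gso|$.

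\textbf{The case $e=1$.} Here $\gcd(e,q+1)=1$, so \eqref{eqn_divisiblecond} reads $(q+1)\mid|\Gso|$ and there is nothing more to do.

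\textbf{Divisibility for $e>1$.} Write $q+1=p^e+1$, so that $p^{2e}-1=(p^e-1)(p^e+1)$. The primitive part $\Phi_{2e}^*(p)$ divides $p^{2e}-1$ and is coprime to $p^e-1$, hence it divides $p^e+1=q+1$. By the remark after Lemma \ref{lem_Zsigmondyprime}, every prime divisor $r$ of $\Phi_{2e}^*(p)$ satisfies $r\equiv1\pmod{2e}$. Thus $r>2e>e$ and $\gcd(\Phi_{2e}^*(p),e)=1$. It follows that $\Phi_{2e}^*(p)$ is coprime to $\gcd(e,q+1)$, so it divides $\frac{q+1}{\gcd(e,q+1)}$ and hence divides $|\Gso|$.

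\textbf{Choosing the composition factor.} Fix a prime $r\mid\Phi_{2e}^*(p)$. Since $|\Gso|$ is the product of the orders of its composition factors, $r$ divides the order of some composition factor $S$. Suppose $S\cong\PSL_2(p^t)$. Then $r$ divides $|\PSL_2(p^t)|=p^t(p^{2t}-1)/\gcd(2,p^t-1)$. As $r\neq p$, we get $r\mid p^{2t}-1$. Because $r$ is a primitive prime divisor of $p^{2e}-1$, the multiplicative order of $p$ modulo $r$ is exactly $2e$. Hence $2e\mid 2t$, that is, $e\mid t$.

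The only subtle point is the coprimality of $\Phi_{2e}^*(p)$ with $e$, and the congruence $r\equiv1\pmod{2e}$ settles it. We do not even need existence of primitive prime divisors here: if $\Phi_{2e}^*(p)=1$ the statement is vacuous. (The exceptional case $(p,2e)=(2,6)$ cannot occur anyway, since $p$ is odd.)

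One could worry that the factor $S$ is abelian, i.e.\ $S\cong C_r$. The statement only claims $e\mid t$ under the hypothesis that $S$ is isomorphic to some $\PSL_2(p^t)$, so this case needs no treatment. So the proof is a short number-theoretic argument, and I expect no real obstacle.
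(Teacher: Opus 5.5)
Your proof is correct and follows the paper's argument essentially step by step. You derive the divisibility from \eqref{eqn_divisiblecond}, use that $\Phi_{2e}^*(p)$ is coprime to $p^e-1$ and to $e$, and then use that $p$ has order $2e$ modulo $r$ to force $2e\mid 2t$. Your explicit justification of $\gcd(\Phi_{2e}^*(p),e)=1$ via $r\equiv1\pmod{2e}$ is accurate, and so is your remark that Zsigmondy's theorem is not needed for the statement itself (the paper invokes it only to record $\Phi_{2e}^*(p)>1$ for later use).
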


\begin{proof}
The assertion in the $e=1$ case follows immediately from
\eqref{eqn_divisiblecond}. Suppose that $e>1$. By Lemma
\ref{lem_Zsigmondyprime}, $\Phi_{2e}^*(p)>1$. Moreover,
$\Phi_{2e}^*(p)$ divides $p^e+1=q+1$ and is coprime to $e$. Hence
$\Phi_{2e}^*(p)$ divides $(q+1)/\gcd(e,q+1)$, and so
$\Phi_{2e}^*(p)\mid|\Gso|$ by \eqref{eqn_divisiblecond}.

Let $r$ be a prime divisor of $\Phi_{2e}^*(p)$. Since the product of the
orders of the composition factors of $\Gso$ is $|\Gso|$, some composition
factor $S$ satisfies $r\mid|S|$. Such an $r$ is a primitive prime divisor
of $p^{2e}-1$, so the order of $p$ modulo $r$ is $2e$. If
$S\cong\PSL_2(p^t)$, then $r\ne p$ and the order formula for
$\PSL_2(p^t)$ gives $r\mid p^{2t}-1$. Therefore $2e\mid2t$, and hence
$e\mid t$.
\end{proof}

\begin{lemma}\label{lem_unique_nonabelian_factor}
Let $\cB$ be a BLT-set of $\cQ(4,q)$, where $q>5$ is odd, and suppose that $\overline G=\Aut(\cB)$ is reducible, non-solvable and acts transitively on $\cB$. 
\begin{itemize}
\item[(1)] If $\Gso$ has a composition factor isomorphic to $\PSL_2(q_0)$, where $q_0$ is a power of $p$, then it is the unique non-abelian composition factor of $\Gso$. 
\item[(2)] If $\Gso$ has two non-abelian composition factors, then both are isomorphic to $A_5$, and $\Gso^{(\infty)}\cong 2.A_5\circ 2.A_5$.
\end{itemize}
\end{lemma}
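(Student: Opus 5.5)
We go through the six reducible geometries of Theorem~\ref{thm_SO_5(q)} and, in each one, read off the possible non-abelian composition factors of $\Gso$. Since $\Gso$ is non-solvable and lies in one of the stabilizers (i)--(vi), its non-abelian composition factors are among those of that stabilizer.

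In cases (i)--(iv) the stabilizer is an extension of solvable groups (elementary abelian, cyclic, Heisenberg, or a group of order $2$) by a single copy of $\SO_3(q)$ or $\GL_2(q)$. Modulo solvable parts this is $\PSL_2(q)\cong\Omega_3(q)$. Hence $\Gso$ has exactly one non-abelian composition factor, and it is a non-abelian composition factor of a subgroup of $\PSL_2(q)$. By Lemma~\ref{lem_subPSL} this factor is $\PSL_2(q_0)$ or $A_5$.

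In case (v), $\Omega_4^-(q)\cong\PSL_2(q^2)$, and $\mathrm{O}_4^-(q)/\Omega_4^-(q)$ is solvable. So again there is at most one non-abelian composition factor, and Lemma~\ref{lem_subPSL} applied with field size $q^2$ shows it is $\PSL_2(q_0)$ (with $q_0$ a power of $p$) or $A_5$.

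The only case with two non-abelian composition factors is therefore (vi). There $\Omega_4^+(q)\cong\SL_2(q)\circ\SL_2(q)$, with solvable index in $\mathrm{O}_4^+(q)$, possibly including a swap of the two factors. Let $K:=\Gso\cap(\SL_2(q)\circ\SL_2(q))$ and let $H_1,H_2$ be the images of $K$ in the two factors $\PSL_2(q)$. Then $K$ is a subdirect-type subgroup of $H_1\times H_2$ up to the central quotient. If $\Gso$ has two non-abelian composition factors, both $H_1$ and $H_2$ are non-solvable, and $K$ contains the product of the derived parts. (If $K$ were diagonal, there would be only one factor, by Goursat.) So the second part of the statement reduces to ruling out, in case (vi), any composition factor $\PSL_2(q_0)$ occurring together with a second non-abelian factor. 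Once that is done, both $H_i^{(\infty)}$ are $A_5$ by Lemma~\ref{lem_subPSL}, and lifting to $\SL_2(q)$ (where $A_5$ lifts to $2.A_5$) gives $\Gso^{(\infty)}\cong 2.A_5\circ 2.A_5$.

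The main obstacle is this exclusion. I would use transitivity, via Lemma~\ref{Nelson-lemma}, applied to the normal subgroup $N=\pi(\Gso^{(\infty)})$ of $\overline G$. Suppose $H_1^{(\infty)}\cong\PSL_2(q_0)$, so $L_1:=\SL_2(q_0)$ is a normal factor of $\Gso^{(\infty)}$. Then the $L_1$-orbits on $\cB$ all have equal length dividing $q+1$.

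\begin{itemize}
\item If $L_1$ fixes $\cB$ pointwise, then $\cB$ lies in the fixed space of $L_1$. On the $4$-space $P^\perp$, viewed as $\F_q^2\otimes\F_q^2$, the group $\SL_2(q_0)\otimes 1$ has no nonzero fixed vectors. So $\cB\subseteq P$, which is a single non-singular point, a contradiction.
\item Otherwise the $L_1$-orbits are nontrivial. By the index-dividing-$(q+1)$ fact after Lemma~\ref{Nelson-lemma}, applied to subgroups of $\PSL_2(q_0)$, the point stabilizers contain a unipotent subgroup of order $q_0$. I would then argue that singular points of $P^\perp\cap\cQ(4,q)$ fixed by a Borel of one factor lie on a line of one ruling of the hyperbolic quadric. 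The resulting orbits are lines of $\cQ(4,q)$, which contradicts the fact that points of $\cB$ are pairwise non-collinear. Alternatively, Lemma~\ref{lem_primitive} together with the order of the other factor gives a numerical contradiction.
\end{itemize}

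A similar orbit argument rules out $\PSL_2(q_0)$ paired with $A_5$. This proves (1), and hence (2).
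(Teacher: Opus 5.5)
\textbf{Gap: the points of $\cB$ off $P^\perp$ are never treated.} Your overall reduction matches the paper's: only geometry (vi) can carry two non-abelian factors, so everything rests on excluding a factor $\PSL_2(q_0)$ there, and then the $A_5$'s lift to $2.A_5$. The problem is in your exclusion step. Your second bullet analyses only singular points of $P^\perp$, that is, points $\la(x,M)\ra$ with $x=0$. Nothing you say forces $\cB$ into $P^\perp$. The points with $x\neq 0$ (so $\det M=x^2\neq0$) are the generic case, and they are exactly the ones relevant later under the full-span hypothesis; you never address them.

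\textbf{The repair.} A nonidentity unipotent $u$ in the left factor fixes $\la(x,M)\ra$ with $x\neq0$ only if $uM=M$, which is impossible for invertible $M$. So your conclusion that every stabilizer in $L_1$ contains a unipotent subgroup of order $q_0$ gives an immediate contradiction for these points. Your transitivity/Dickson input is only needed for this case, which you skipped. For $x=0$ it is unnecessary: the $L_1$-orbit of $\la(0,\mathbf a\mathbf b^\top)\ra$ lies on the totally singular line $\{\la(0,\mathbf c\mathbf b^\top)\ra\}$ whatever the stabilizers are, and it is nontrivial by your first bullet.

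\textbf{Your fallback does not work.} The ``numerical contradiction'' via Lemma~\ref{lem_primitive} fails. For example, $\Gso^{(\infty)}=\Omega_4^+(q)$ or $\SL_2(q)\circ 2.A_5$ has order divisible by $q+1$ and by every primitive divisor of $p^{2e}-1$, so no divisibility condition is violated.

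\textbf{Minor point.} If $G$ interchanges the two factors, $L_1$ is normal only in a subgroup of index two. Its orbits on $\cB$ then need not all have equal length, though they still divide $q+1$, which is all you actually use.

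\textbf{Comparison with the paper.} The paper needs neither transitivity nor Dickson's classification at this step; it uses only that $\widehat H_1\le G$ preserves $\cB$. For $x\neq0$ it takes any nonidentity unipotent $u\in\widehat H_1$ and computes $B\bigl((x,M),(x,uM)\bigr)=(2-\tr u)x^2=0$. Hence $\la(x,M)\ra$ and $\la(x,uM)\ra$ are distinct collinear points of $\cB$. For $x=0$ it picks $u$ with $\la uM\ra\neq\la M\ra$ and notes $B\bigl((0,M),(0,uM)\bigr)=-\det(I+u)\det M=0$. Once you add the $x\neq0$ case, your argument is correct, but it is a more roundabout version of this direct computation.
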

\begin{proof} 
By the structure of $\Gso$ in Theorem \ref{thm_SO_5(q)} together with Lemma \ref{lem_subPSL}, $\Gso$ has at most two non-abelian composition factors. Suppose that $\Gso$ has two such non-abelian factors $H_1$ and $H_2$, and at least one of $H_1,H_2$ is isomorphic to $\PSL_2(q_0)$. This can only occur in the
hyperbolic case \textup{(vi)} of Theorem \ref{thm_SO_5(q)}. We therefore use the model
\[
V=\F_q\oplus M_2(q),\qquad Q(x,M)=x^2-\det(M).
\]
In this model, $\Gso^{(\infty)}=\widehat H_1\circ\widehat H_2$, and
$\widehat H_i/Z(\widehat H_i)\cong H_i$, where the two quasisimple subgroups
lie in the two $\SL_2(q)$-factors, respectively.

After interchanging the two factors $H_1,H_2$ if necessary, we can assume that $H_1\cong\PSL_2(q_0)$. The corresponding quasisimple subgroup $\widehat H_1\cong\SL_2(q_0)$ acts by left multiplication. Let $P=\la(x,M)\ra\in\cB$. If $x\ne0$, then $M$ is invertible, and no nonidentity unipotent $u\in\SL_2(q_0)$ stabilizes $P$. Choose such an
element with $\tr(u)=2$. Then
\[
B\bigl((x,M),(x,uM)\bigr)=(2-\tr(u))x^2=0,
\]
which contradicts the BLT property. On the other hand, if $x=0$, then $M$ has rank one.
Since $\SL_2(q_0)$ acts absolutely irreducibly on its natural
two-dimensional module, there exists $u$ such that
$\la uM\ra\ne\la M\ra$. Therefore
$\det(M+uM)=\det(I+u)\det(M)=0$ and
$B\bigl((0,M),(0,uM)\bigr)=0$, which is also a
contradiction. The same argument applies after interchanging the two $\SL_2(q)$-factors. Hence, neither $H_1$
nor $H_2$ can be isomorphic to $\PSL_2(q_0)$. By Lemma
\ref{lem_subPSL}, both $H_1$ and $H_2$ are isomorphic to  $A_5$,  and their quasisimple covers in $\SL_2(q)$ are $2.A_5$. Therefore $\Gso^{(\infty)}\cong2.A_5\circ2.A_5$. This proves the second assertion $(2)$.
\end{proof}

\subsection{The reducible geometric cases under the
\texorpdfstring{$\PSL_2(q_0)$}{PSL}-condition}
The cases $q\leq29$ and $31\leq q\leq67$ are classified in
\cite{LawPenttila2003} and \cite{Betten2013}*{Theorem~1 and Table~1},
respectively; coordinates and stabilizers are available in
\cite{SmallBLT}. Hence assume $q\geq71$ and that
$\Gso$ has a composition factor $\PSL_2(q_0)$. This is its unique non-abelian factor by Lemma \ref{lem_unique_nonabelian_factor}. For each geometry, we use the stabilizer in $\SO_5(q)$ listed in Theorem \ref{thm_SO_5(q)}.

Whenever $e>1$ and the stabilizer forces a primitive divisor supplied by Lemma \ref{lem_primitive} into this factor, writing it as
$\PSL_2(p^t)$ gives $e\mid t$; the same conclusion is automatic for $e=1$. Thus $t=e$ if $t\mid e$, while $t\in\{e,2e\}$ if $t\mid2e$. We do not use this argument in the external-line case, whose stabilizer contains a cyclic factor of order $q+1$.

We will use the following model for the cases \textup{(i)--(iii)} of Theorem
\ref{thm_SO_5(q)}. Let $V=\F_q^5$, with standard basis
$\{\bfe_1,\ldots,\bfe_5\}$, and equip $V$ with the quadratic form
\begin{equation*}\label{eqn_Q4qmodel}
Q(\bfx)=x_1x_5+x_2x_4-x_3^2,
\qquad
B(\bfx,\bfy)=Q(\bfx+\bfy)-Q(\bfx)-Q(\bfy).
\end{equation*}
With respect to the standard basis, we have $\operatorname{disc}(Q)=-2$.


\begin{proposition}\label{prop_case1}
{Suppose that $G$ stabilizes a point of $\cQ(4,q)$.
If the $\PSL_2(q_0)$-condition holds and $q\geq 71$ is odd, then no transitive
BLT-sets $\cB$ exist.}
\end{proposition}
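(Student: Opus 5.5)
The plan is to reduce to the perfect core $L:=\Gso^{(\infty)}$, show that up to conjugacy it is the Levi copy $L_0\cong\Omega_3(q)$ of Lemma~\ref{lem_short_omega3_orbits}, and then use orbit lengths to force $\cB$ to be a degenerate conic. Work in the model $Q(\bfx)=x_1x_5+x_2x_4-x_3^2$. After conjugation we may take $G$ to fix $\la\bfe_1\ra$. Then $\Gso\le R:(C_{q-1}\times\SO_3(q))$, where $R\cong E_q^3$ is the unipotent radical and the Levi $\SO_3(q)$ acts as $\operatorname{diag}(1,A,1)$ on $\la\bfe_2,\bfe_3,\bfe_4\ra$.

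First I identify the non-abelian factor. Since $R$ and $C_{q-1}$ are solvable, the unique non-abelian composition factor of $\Gso$ (Lemma~\ref{lem_unique_nonabelian_factor}) embeds in $\SO_3(q)$, hence in $\Omega_3(q)\cong\PSL_2(q)$. Write it as $\PSL_2(p^t)$ with $t\mid e$. For $e>1$, the primes of $\Phi^*_{2e}(p)$ do not divide $|R|$, $q-1$, or $|\SO_3(q):\Omega_3(q)|$, so by Lemma~\ref{lem_primitive} they divide this factor, giving $e\mid t$ and hence $t=e$. For $e=1$ there is nothing to prove. Therefore the image of $L$ in the Levi quotient is all of $\Omega_3(q)$.

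Next I pin down $L$. The group $L\cap R$ is an $\Omega_3(q)$-submodule of $R$. The Levi acts on $R\cong\bfe_1^\perp/\la\bfe_1\ra$ as on the natural $3$-dimensional orthogonal module, which is irreducible, so $L\cap R\in\{1,R\}$.

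\emph{Case $L\cap R=R$.} Here $R$ is characteristic in $L$, which is characteristic in $\Gso$, which is normal in $G$. So $R$ is normalized by $G$, and by Lemma~\ref{Nelson-lemma} its orbits on $\cB$ have a common length that is a power of $p$ dividing $q+1$, i.e.\ length $1$. But the only point of $\cQ(4,q)$ fixed by all of $R$ is $\la\bfe_1\ra$, while $|\cB|=q+1>1$. This is a contradiction.

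\emph{Case $L\cap R=1$.} Then $L\cong\Omega_3(q)$ is a complement to $R$ in $R:\Omega_3(q)$. I expect the main obstacle to be showing that such a complement is $R$-conjugate to $L_0$. This amounts to the vanishing of $H^1(\SL_2(q),\mathrm{ad})$ for odd $q>5$. I would either cite this standard fact, or argue directly as follows. Conjugate a Borel subgroup of $L$ into the Levi using Sylow and Hall arguments. Then check that the elements of order $p$ and the diagonal torus of $L$ are determined by those of $L_0$ up to a single element of $R$, using the explicit form of $\rho$ in \eqref{eqn_Omega3q}. Since conjugation by $R$ fixes $\la\bfe_1\ra$ and preserves $Q$, we may then assume $L=L_0$.

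Finally, $L_0=L$ is normal in $G$, so by Lemma~\ref{Nelson-lemma} all $L_0$-orbits on $\cB$ have the same length, and this length divides $q+1$. By Lemma~\ref{lem_short_omega3_orbits} the only candidates are the fixed points $\la\bfe_1\ra,\la\bfe_5\ra$ and the conic $C=\{\la(0,\bfw,0)\ra\}$ of length $q+1$. Orbits of length $1$ cannot cover $q+1\ge72$ points, so $\cB=C$. But every point of $C$ lies in $\la\bfe_2,\bfe_3,\bfe_4\ra\subseteq\bfe_1^\perp$. Hence the singular point $\la\bfe_1\ra$ is collinear with all $q+1$ points of $\cB$, contradicting the BLT property. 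So no transitive BLT-set exists in this case.
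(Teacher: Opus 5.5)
Your proposal is correct and follows essentially the same route as the paper. The shared steps are the primitive-divisor argument giving all of $\Omega_3(q)$ in the Levi quotient, irreducibility of $R$ giving $L\cap R\in\{1,R\}$, vanishing of $H^1(\Omega_3(q),\F_q^3)$ (cited from Cline--Parshall--Scott in the paper) to conjugate a complement onto $L_0$, and Lemma~\ref{lem_short_omega3_orbits} to force $\cB$ to be the conic in $\bfe_1^\perp$. The only deviation is the case $L\cap R=R$: you use $R=O_p(L)\unlhd G$ and Lemma~\ref{Nelson-lemma} to make $R$ fix $\cB$ pointwise, whereas the paper simply notes $L_0\le R:L_0$ and runs the same conic argument; both are valid.
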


\begin{proof}
Suppose that such a transitive BLT-set $\cB$ exists. Since $\operatorname{PSO}_5(q)$ acts transitively on the points of $\cQ(4,q)$, we assume that
$P=\la\bfe_1\ra$ without loss of generality. Let $K:=\Stab_{\SO_5(q)}(P)\cong E_q^3:(C_{q-1}\times \SO_3(q))$. Then
\[
 K^{(\infty)}=U:L_0,
 \qquad
 L_0:=\{\operatorname{diag}(1,A,1):A\in\Omega_3(q)\},
\]
where $U$ is the unipotent radical isomorphic to $E_q^3$ and $L_0$ is the
Levi factor. It follows that $\Gso\leq K$. Set $X:=\Gso/(\Gso\cap U) \cong \Gso U/U \leq K/U$. Since $\Gso\cap U$ is a solvable $p$-group and $\Gso^{(\infty)}\leq K^{(\infty)}$, we have $X^{(\infty)}\leq L_0U/U$. By the $\PSL_2(q_0)$-condition and Lemma \ref{lem_subPSL}, we obtain
$X^{(\infty)}\cong\Omega_3(p^t)$ for some $t\mid e$. Observe that $\Gso\cap U$ is an elementary abelian $p$-group and every prime divisor of $[X:X^{(\infty)}]$ divides $2(q-1)$. Hence if $e>1$, $\Phi_{2e}^*(p)>1$ by Lemma \ref{lem_Zsigmondyprime}. It follows from Lemma \ref{lem_primitive} that $\Phi_{2e}^*(p)$ divides $|X^{(\infty)}|$, and then $e=t$. Thus,
\[
 X^{(\infty)}=L_0U/U\cong L_0\cong\Omega_3(q).
\]
Consider the natural projection $\rho:\ K^{(\infty)}\to L_0$. Then $\rho(\Gso^{(\infty)})=L_0$. Put
$N:=\Gso^{(\infty)}\cap U$. The natural action of $L_0\cong \Omega_3(q)$ on $U$ by conjugation is irreducible over $\F_p$. Hence, the only $L_0$-invariant subgroups are $1$ and $U$, i.e.,  either $N=1$
or $N=U$. If $N=U$, then
$\Gso^{(\infty)}=U:L_0$. If $\Gso^{(\infty)}\cap U=1$, then $\Gso^{(\infty)}$ is a complement to $U$ in $U:L_0$, and
\[
 H^1(L_0,U)=H^1(\Omega_3(q),\F_q^3)=0
\]
by \cite{ClineParshallScott1975}*{Table~4.5}. Hence all such complements are conjugate to $L_0$ by an element of $U$. Without loss of generality, we may assume that $L_0\leq \Gso^{(\infty)}$ in both cases.

Since $\cB$ is $L_0$-invariant and $|\cB|=q+1$, Lemma
\ref{lem_short_omega3_orbits} gives
\[
\cB=\{\la(0,\bfw,0)\ra: {\bf 0}\ne \bfw\in \F_q^3,\,\, Q_{\mathrm{std}}(\bfw)=0\}.
\]
Every point of this conic is collinear with $\la\bfe_1\ra$, contradicting
the BLT property. Thus no such transitive BLT-sets exist.
\end{proof}

\begin{proposition}\label{prop_case2}
Suppose that $G$ stabilizes a pair of non-collinear points of $\cQ(4,q)$. If the
$\PSL_2(q_0)$-condition holds and $q\geq 71$ is odd, then no transitive BLT-sets $\cB$ exist.
\end{proposition}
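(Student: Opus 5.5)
We follow the pattern of Proposition \ref{prop_case1}. Since $\operatorname{PSO}_5(q)$ is transitive on pairs of non-collinear singular points, we assume the pair is $\{\la\bfe_1\ra,\la\bfe_5\ra\}$. Its stabilizer in $\SO_5(q)$ is $K=(C_{q-1}\times\SO_3(q)).2$, where $C_{q-1}$ acts as $\operatorname{diag}(\lambda,I_3,\lambda^{-1})$ and $\SO_3(q)$ acts on $\la\bfe_2,\bfe_3,\bfe_4\ra=\la\bfe_1,\bfe_5\ra^\perp$. Hence $K^{(\infty)}=L_0$ with $L_0$ as in Lemma \ref{lem_short_omega3_orbits}. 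Since $\Gso\le K$, the group $\Gso^{(\infty)}$ lies in $L_0\cong\Omega_3(q)\cong\PSL_2(q)$. By the $\PSL_2(q_0)$-condition and Lemma \ref{lem_subPSL}, $\Gso^{(\infty)}\cong\Omega_3(p^t)$ with $t\mid e$.

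The first step is to force $t=e$. For $e=1$ this is automatic. For $e>1$, the composition factors of $\Gso$ other than $\Gso^{(\infty)}$ come from $C_{q-1}$, $C_2$, and the solvable quotient $\SO_3(q)/\Omega_3(q)$. Their orders divide $2(q-1)$, so they are coprime to $\Phi^*_{2e}(p)$. Lemma \ref{lem_primitive} then places a primitive prime divisor in the factor $\PSL_2(p^t)$, which gives $e\mid t$ and thus $t=e$. Consequently $\Gso^{(\infty)}=L_0$. Here we should check that the only subgroups of $L_0$ isomorphic to $\Omega_3(q)$ are $L_0$ itself, which follows from the order.

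Next, $\cB$ is $L_0$-invariant, so it is a union of $L_0$-orbits on $\cQ(4,q)$ of total size $q+1$. By Lemma \ref{lem_short_omega3_orbits}, for $q>5$ the only orbits of length at most $q+1$ are the two fixed points $\la\bfe_1\ra,\la\bfe_5\ra$ and the conic $\mathcal C=\{\la(0,\bfw,0)\ra:Q_{\mathrm{std}}(\bfw)=0\}$ of length $q+1$. Since $|\cB|=q+1$, either $\cB=\mathcal C$, or $\cB$ is a union of some fixed points with orbits of length at most $q-1$. The second option is impossible, because every non-fixed orbit other than $\mathcal C$ has length greater than $q+1$ by the lemma, and two fixed points cannot make up $q+1$. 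So $\cB=\mathcal C$. Every point of $\mathcal C$ lies in $\la\bfe_1,\bfe_5\ra^\perp$ and is therefore collinear with the singular point $\la\bfe_1\ra$. Thus $\la\bfe_1\ra$ is collinear with all $q+1\ge3$ points of $\cB$, contradicting the BLT property.

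The main obstacle is the first step: it must be justified carefully that the primitive divisor cannot be absorbed by the torus $C_{q-1}$ or by the index-$2$ extension, and that $\Gso^{(\infty)}$ is genuinely the Levi $L_0$ and not some twisted copy. Because $K$ has no unipotent radical, no cohomological conjugacy argument (such as $H^1$) is needed here, unlike in Proposition \ref{prop_case1}. Once $L_0\le\Gso$, the orbit argument closes the proof immediately.
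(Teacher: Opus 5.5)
Your proposal is correct and follows essentially the same route as the paper. You move the pair to $\{\la\bfe_1\ra,\la\bfe_5\ra\}$, use the primitive-divisor argument (abelian composition factors have order dividing $2(q-1)$) to force $\Gso^{(\infty)}=L_0$, and then apply Lemma~\ref{lem_short_omega3_orbits} to get that $\cB$ is the conic in $\la\bfe_1,\bfe_5\ra^\perp$, every point of which is collinear with $\la\bfe_1\ra$. One minor point: if $t<e$, your list of abelian factors should also include a possible factor of order $2$ from $N_{L_0}(\Gso^{(\infty)})/\Gso^{(\infty)}$, but this is harmless because its order still divides $2(q-1)$.
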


\begin{proof}
Suppose that such a transitive BLT-set $\cB$ exists. By
Witt's theorem \cite{KL129}*{Proposition~2.1.6}, 
$\operatorname{PSO}_5(q)$ acts transitively on all pairs of non-collinear points of $\cQ(4,q)$. Without loss of generality, we therefore assume that the pair of points stabilized by $G$ is $\{P,Q\}=\{\la\bfe_1\ra,\la\bfe_5\ra\}$.

Let $W:=\la\bfe_1,\bfe_5\ra^\perp$. Then $W$ is a three-dimensional
non-degenerate orthogonal space. By the structure of the stabilizer of $\{P,Q\}$ given in Theorem \ref{thm_SO_5(q)}, $\Gso^{(\infty)}\cong \Omega_3(p^t)$ for some $t\mid e$, and it induces a natural action on $W$. 
Every abelian composition factor of $\Gso$ has order dividing
$2(q-1)$. Hence the preceding primitive-divisor argument, together
with Lemma \ref{lem_primitive}, gives $t=e$. Consequently,
$\Gso^{(\infty)}$ induces $\Omega_3(q)$ acting naturally on $W$, and acts trivially on $\la\bfe_1,\bfe_5\ra$. Thus
$L_0:=\{\operatorname{diag}(1,A,1):A\in\Omega_3(q)\}
\leq\Gso^{(\infty)}$.
 Hence, by Lemma
\ref{lem_short_omega3_orbits},
\[
 \cB=\PG(W)\cap\cQ(4,q).
\]
Every point of this conic is collinear with both $P$ and $Q$, contradicting
the BLT property.
\end{proof}
\begin{proposition}\label{prop_case3}
Suppose that $G$ stabilizes a line of $\cQ(4,q)$. If the
$\PSL_2(q_0)$-condition holds and $q\geq 71$ is odd, then no such
transitive BLT-sets $\cB$ exist.
\end{proposition}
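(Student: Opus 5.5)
We argue by contradiction, along the lines of Propositions \ref{prop_case1} and \ref{prop_case2}. Witt's theorem lets us assume the stabilized totally singular line is $\ell=\la\bfe_1,\bfe_2\ra$, so $\ell^\perp=\la\bfe_1,\bfe_2,\bfe_3\ra$. By Theorem \ref{thm_SO_5(q)}(iii) we have $\Gso\leq K:=U:\GL_2(q)$, where $U$ is the Heisenberg unipotent radical and the Levi factor $\GL_2(q)$ acts on $\ell$ naturally and on $V/\ell^\perp$ contragrediently.

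\textbf{Step 1: identify the non-abelian composition factor.} Since $U$ is a $p$-group, the $\PSL_2(q_0)$-condition and Lemma \ref{lem_subPSL} show that $\Gso^{(\infty)}U/U$ is a copy of $\SL_2(p^t)$ inside the Levi factor, with $t\mid e$. The abelian composition factors of $\Gso$ are $p$-groups or have order dividing $q-1$ or $2$. So for $e>1$, Lemma \ref{lem_primitive} places a primitive divisor of $p^{2e}-1$ in this factor, which gives $t=e$. Hence $\Gso^{(\infty)}$ maps onto $L:=\SL_2(q)\leq\GL_2(q)$.

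\textbf{Step 2: locate a Levi complement.} Put $N:=\Gso^{(\infty)}\cap U$; it is $L$-invariant. The $L$-module structure is $U/Z(U)\cong\F_q^2$, which is natural and irreducible, and $Z(U)\cong\F_q$, which is trivial. I would show that $N$ is one of $1$, $Z(U)$, or $U$. Then I would use the vanishing of $H^1(\SL_2(q),\F_q^2)$ (for $q\geq 71$ odd, again from \cite{ClineParshallScott1975}) together with $H^1(\SL_2(q),\F_q)=\mathrm{Hom}(\SL_2(q),\F_q)=0$. These let us conjugate by an element of $U$ so that $L\leq\Gso^{(\infty)}$ in every case. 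This cohomological step, handled on the non-abelian $U$ by working through the series $Z(U)<U$, is where I expect the main technical care to be needed.

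\textbf{Step 3: orbit analysis.} Since $\cB$ is $\Gso$-invariant, $L$ acts on $\cB$ with orbits of size at most $q+1$. We analyze the $L$-orbits on points of $\cQ(4,q)$, with $L$ acting as $\operatorname{diag}(A,1,A^{-*})$ in suitable coordinates. A singular point $\la\bfv\ra$ with nonzero component in $V/\ell^\perp\cong\F_q^2$ has an orbit whose projection there is $\F_q^2\setminus\{0\}$ up to scalars. Its stabilizer fixes a nonzero vector of the dual module, so up to scalars it lies in a unipotent subgroup of order $q$. The orbit length is therefore at least $(q^2-1)/2>q+1$, which is impossible. So $\cB\subseteq\ell^\perp\cap\cQ(4,q)$, and this set is exactly $\ell$, the cone with vertex line $\ell$ meeting $\ell^\perp$ only in $\ell$ because $Q$ restricted to $\ell^\perp$ is $-x_3^2$. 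This contradicts the fact that the points of a BLT-set are pairwise non-collinear, since $|\cB|=q+1\geq3$ points on one line are pairwise collinear.

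Alternatively, Step 2 can be bypassed by using Lemma \ref{Nelson-lemma} directly with $\Gso^{(\infty)}\unlhd G$. The orbit bound in Step 3 only uses the image in the Levi factor acting on $V/\ell^\perp$, and that image is $\SL_2(q)$ regardless of the $U$-part. So any point outside $\ell^\perp$ again has an orbit of length divisible by a factor exceeding $q+1$. Concretely, the $\Gso^{(\infty)}$-orbit maps onto all nonzero vectors up to $\pm$, which has size $(q^2-1)/2$. This route avoids cohomology entirely, and I would try it first.
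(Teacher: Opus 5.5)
Your Steps 1 and 2 follow the paper's route. One small correction: $\Gso^{(\infty)}$ centralizes $Z(U)$, so a priori $N=\Gso^{(\infty)}\cap U$ could be any subgroup of $Z(U)$, not only $1$, $Z(U)$ or $U$. This is harmless, because the case $N\le Z(U)$ is handled in one stroke by $H^1(\SL_2(q),\F_q^2)=0$ together with perfectness, as you intend.

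\textbf{The gap is in Step 3.} You bound the stabilizer of the vector, not of the projective point. Use coordinates with $Q=x_1x_5+x_2x_4-x_3^2$ and $\ell=\la\bfe_1,\bfe_2\ra$, so that $A=\begin{pmatrix}a&b\\c&d\end{pmatrix}\in\SL_2(q)$ acts by $\bfx\mapsto(ax_1+bx_2,\,cx_1+dx_2,\,x_3,\,ax_4-bx_5,\,-cx_4+dx_5)^\top$. When $x_3=0$, an element may fix $\la\bfx\ra$ with eigenvalue $\lambda\ne1$, and the stabilizer can then be a whole Borel subgroup of order $q(q-1)$.

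For example, the image of $\bfe_5$ is $(0,0,0,-b,d)^\top$. So $\la\bfe_5\ra$ is fixed by every lower triangular $A$, and its orbit is the totally singular line $\la\bfe_4,\bfe_5\ra$. This orbit has length exactly $q+1$ and lies entirely outside $\ell^\perp$. More generally, for each $\mu\in\F_q$ the set $\{\la(y_1,y_2,0,\mu y_1,-\mu y_2)^\top\ra:(y_1,y_2)\ne(0,0)\}$ is an orbit of length $q+1$. Hence $\cB\subseteq\ell^\perp\cap\cQ(4,q)=\ell$ does not follow.

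Your alternative route fails for the same reason. The orbit of a projective point maps onto the $q+1$ points of $\PG(V/\ell^\perp)$, not onto the $(q^2-1)/2$ vectors up to sign. So Lemma \ref{Nelson-lemma} is consistent with orbits of length exactly $q+1$ and yields no contradiction.

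\textbf{How to repair it.} A correct orbit count does the job.
\begin{itemize}
\item If $x_3\ne0$, then $\lambda=1$. The element must fix $(x_1,x_2)$ and $(x_4,-x_5)$, which are linearly independent because $x_1x_5+x_2x_4=x_3^2\ne0$. So the stabilizer is trivial and the orbit has length $q(q^2-1)$.
\item Every singular point with $x_3=0$ lies on one of the $q+1$ lines above, where $\mu\in\F_q\cup\{\infty\}$. These lines form the regulus of the hyperbolic quadric $\bfe_3^\perp\cap\cQ(4,q)$ containing $\ell$.
\end{itemize}
Thus $\cB$ would be one of these totally singular lines, which again contradicts pairwise non-collinearity.

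The paper avoids classifying orbits altogether. For singular $\bfx$ it computes $B(\bfx,M_A\bfx)=(\tr(A)-2)x_3^2$. It then checks that one of the trace-$2$ elements $\begin{pmatrix}1&1\\0&1\end{pmatrix}$ and $\begin{pmatrix}1&0\\1&1\end{pmatrix}$ moves any given point of $\cB$. This produces two distinct collinear points of $\cB$ directly.
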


\begin{proof}
Suppose that such a transitive BLT-set $\cB$ exists. By
Witt's theorem \cite{KL129}*{Proposition 2.1.6},
 $\mathrm{PSO}_5(q)$ acts transitively on the set of all totally singular
 lines. Up to conjugacy, we assume that the
fixed line is
\[
\ell=\{\la(a,b,0,0,0)\ra:\ a,b\in\F_q,\ (a,b)\ne(0,0)\}.
\]
Put $K:=\Stab_{\SO_5(q)}(\ell)$. Then
$K=U:\GL_2(q)$, where $U$ is the Heisenberg group of order
$q^3$. 
Its center is
$Z:=Z(U)\cong E_q$, and $U/Z$ is the natural
two-dimensional $\SL_2(q)$-module. The Levi factor $\GL_2(q)$ is represented by matrices of the form
\[
M_A:=
\begin{pmatrix}
A & &\\
& 1 &\\
& & JA^{-\top}J
\end{pmatrix},
\qquad
J=\begin{pmatrix}0&1\\1&0\end{pmatrix},
\]
where $A\in\GL_2(q)$.

Let $H:=\Gso^{(\infty)}$, and let $\rho:K\to\GL_2(q)$ be the Levi
projection. By Lemma \ref{lem_subPSL} and the $\PSL_2(q_0)$-condition, $\big(\rho(\Gso)\big)^{(\infty)}=\rho(H)$ is conjugate to $\SL_2(p^t)$ for some $t\mid e$. By the structure of $K$ in Theorem \ref{thm_SO_5(q)}, every abelian composition factor of $\Gso$ has order dividing $2p(q-1)$. When $e>1$, by the preceding argument on primitive prime divisors of $q^2-1$ and Lemma \ref{lem_primitive}, we have $e=t$. Consequently,
$\rho(H)=\SL_2(q)$. Set $L_0:=\{M_A:A\in\SL_2(q)\}$.

Write $N:=H\cap U$.  Since $N\unlhd H$, the subgroup
$NZ/Z$ is an $L_0$-invariant additive subgroup of $U/Z$. Since $L_0\cong
\SL_2(q)$ is transitive on the nonzero vectors of its natural module
$U/Z$, we have $NZ/Z=1$ or $NZ/Z=U/Z$.
If $NZ=U$, then, by $Z=[U,U]$, we have 
\[
 Z=[NZ,NZ]=[N,N]\leq N.
\]
Thus $N=U$ and $H=U:L_0$. On the other hand, suppose that $N\leq Z$. Then $HZ/Z$ is a complement to $U/Z$ in $(U/Z):L_0$. By \cite{ClineParshallScott1975}*{Table~4.5},
\[
 H^1(L_0,U/Z)=H^1(\SL_2(q),\F_q^2)=0.
\]
Thus this complement is conjugate by an element of $U/Z$ to the standard
complement $L_0Z/Z$. Hence, up to conjugation, we may assume that $H\leq ZL_0$. The group $L_0$ centralizes $Z$, because its action on $Z$ is given by the determinant. Since $H$ is perfect, we have 
\[
 H=[H,H]
 \leq[ZL_0,ZL_0]=L_0.
\]
Since $\rho(H)=\SL_2(q)$, we have $H=L_0$. Thus, after conjugating by an
element of $U$ when necessary, $L_0\leq H$ in both cases, and $\cB$ is a
union of $L_0$-orbits.

Fix $P_\infty=\la\bfx\ra\in\cB$, with
$\bfx=(x_1,x_2,x_3,x_4,x_5)^\top$, and let
$A=\begin{pmatrix}a&b\\ c&d\end{pmatrix}\in\SL_2(q)$. Then
\[
 M_A\bfx=
(a x_1+b x_2,\ c x_1+d x_2,\ x_3,\ a x_4-b x_5,\ -c x_4+d x_5)^\top.
\]
Using $Q(\bfx)=0$, a direct calculation gives
\[
 B(\bfx,M_A\bfx)=(\tr(A)-2)x_3^2.
\]
Take
\[
 A_+=\begin{pmatrix}1&1\\0&1\end{pmatrix},\qquad
 A_-=\begin{pmatrix}1&0\\1&1\end{pmatrix}.
\]
Both are nonidentity elements of trace $2$. If $x_3\ne0$, then
$x_1x_5+x_2x_4=x_3^2\ne0$, so $(x_2,x_5)\ne(0,0)$ and
$\la\bfx\ra\ne\la M_{A_+}\bfx\ra$. If $x_3=0$ and
$(x_2,x_5)\ne(0,0)$, the same
conclusion holds. Finally, if $x_3=x_2=x_5=0$, then
$(x_1,x_4)\ne(0,0)$ and $\la\bfx\ra\ne\la M_{A_-}\bfx\ra$. In every case,
$\cB$ contains two distinct collinear points, contradicting the BLT
property.
\end{proof}

\begin{proposition}\label{prop_case4}
Suppose that $G$ stabilizes an external line $\ell$ to $\cQ(4,q)$ and its orthogonal complement $\ell^\perp$. If the $\PSL_2(q_0)$-condition holds and $q\geq71$ is odd,
then $\cB$ is classical.
\end{proposition}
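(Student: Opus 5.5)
Since $\ell$ is external, $\ell^\perp$ is a non-degenerate $3$-space. Write $V=\ell\oplus W$ with $W=\ell^\perp$, and choose coordinates so that $Q|_W$ is equivalent to $-Q_{\mathrm{std}}$ (or a scalar multiple), so that $\PG(W)\cap\cQ(4,q)$ is a conic $\mathcal C$ of size $q+1$. By Theorem \ref{thm_SO_5(q)}(iv), $\Gso\leq(C_{q+1}\times\SO_3(q)).2$, where $C_{q+1}$ acts as $\SO^-_2(q)$ on $\ell$ and $\SO_3(q)$ acts on $W$. The only non-abelian composition factor therefore comes from $\SO_3(q)$, and by Lemma \ref{lem_subPSL} and the $\PSL_2(q_0)$-condition, $H:=\Gso^{(\infty)}$ acts trivially on $\ell$ and as $\Omega_3(p^t)$ on $W$, with $t\mid e$. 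The remark before Proposition \ref{prop_case1} warns that the primitive-divisor argument is unavailable here, because $C_{q+1}$ can absorb $\Phi_{2e}^*(p)$. So I will not try to force $t=e$ by counting. Instead I will treat a general subfield group $H\cong\Omega_3(q_0)$ with $q=q_0^m$, embedded in $\Omega_3(q)$ as the subfield subgroup stabilizing a form equivalent to $Q|_W$.

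The main step uses Theorem \ref{thm_square_class} with the perfect group $L=H$. Take $P=\la\bfv\ra\in\cB$ and write $\bfv=\bfa+\bfw$ with $\bfa\in\ell$ and $\bfw\in W$. For $h\in H$,
\[
B(\bfv,h\bfv)=B(\bfa,\bfa)+B_W(\bfw,h\bfw)=2Q(\bfa)+B_W(\bfw,h\bfw).
\]
Suppose first that $\bfa\ne0$. Then $Q(\bfa)\neq0$ since $\ell$ is external, and $Q(\bfw)=-Q(\bfa)\ne0$. We need $\Stab_H(P)=\Stab_H(\bfv)$; this holds because $h$ fixes $\bfa\neq 0$, so any scalar must be $1$. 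The stabilizer of $\bfw$ in $H$ is dihedral of order $q_0\pm1$, so the $H$-orbit of $P$ has length about $q_0^2/2$. Lemma \ref{Nelson-lemma} then forces this length to divide $q+1$. For $q_0=q$ this is impossible since $q\ge71$. For $q_0<q$ I would rather derive a contradiction directly from Theorem \ref{thm_square_class}. It requires
\[
2Q(\bfa)+B_W(\bfw,h\bfw)\in-\tfrac{\operatorname{disc}(Q)}{2}\blacksquare_q \qquad\text{for all } h\in H \text{ with } h\bfw\notin\la\bfw\ra.
\]
Now let $h$ run through the root subgroups of $H$ (order $q_0$) and through a split torus. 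Then $B_W(\bfw,h\bfw)$ is a polynomial of degree $2$ in the root parameter $s\in\F_{q_0}$: writing $\bfw$ in conic coordinates and applying \eqref{eqn_Omega3q} with $A=\begin{pmatrix}1&s\\0&1\end{pmatrix}$ gives $c_1s^2+c_2s$ with $c_1\ne0$ for generic $\bfw$. So we need a quadratic $2Q(\bfa)+c_1s^2+c_2s$ to take only values in one fixed square class of $\F_q$ as $s$ ranges over $\F_{q_0}^\times$. This cannot hold: either the quadratic has a root, giving the value $0$ (a collinear pair, forbidden), or a conjugation, varying over $H$-conjugates of the root group, produces both square classes. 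When $q_0$ is large I can use Theorem \ref{thm_FFquadsum}. When $q_0$ is small, $q_0\le 7$, the relevant $s$ values are few, and I would instead exploit the commutator structure: two collinear-free values would need to lie in the same class for all $h$ in $H$, and using $s$ and $-s$ one can force the sum $2(2Q(\bfa)+c_1s^2)$ to land in both classes. The case $\bfw$ singular in $W$ is handled the same way, with $c_1=0$ but $c_2\neq 0$, giving a linear polynomial that takes the value $0$, or is handled by the trace-$2$ argument as in Proposition \ref{prop_case3}.

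Hence $\bfa=0$ for every point of $\cB$, i.e.\ $\cB\subseteq\PG(W)\cap\cQ(4,q)=\mathcal C$. Since $|\cB|=q+1=|\mathcal C|$, we get $\cB=\mathcal C=\pi\cap\cQ(4,q)$ with $\pi=\PG(W)$ and $\pi^\perp=\ell$ external, which is exactly the classical BLT-set. (This also agrees with Theorem \ref{thm_charact}, since $\cB$ lies in a hyperplane.)

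The main obstacle is the subfield case $q_0<q$, where orbit-length counting fails and the contradiction must come from the square-class condition of Theorem \ref{thm_square_class}. This requires the Weil bound of Theorem \ref{thm_FFquadsum}, or a torus sum as in Lemma \ref{lem_caseI_torus_sum}, applied over the subfield-sized parameter set. I expect to need $q_0$ large relative to the degree to conclude, with small $q_0$ treated by a direct argument using elements of order $2$ or $3$ in $\Omega_3(q_0)$.
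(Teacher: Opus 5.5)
Your reduction is sound: it suffices to show that every point $\la\mathbf{v}\ra\in\cB$ has zero component $\mathbf{a}$ on $\ell$, and once that is known, $\cB=\PG(W)\cap\cQ(4,q)$ follows as you say. The case $q_0=q$ is also fine. But the case $q_0<q$, which you flag as the main obstacle, is not proved, and the tools you sketch would not close it.

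\textbf{Why the sketch fails for $q_0<q$.} The sum $\sum_{s\in\F_{q_0}}\chi(2Q(\mathbf{a})+c_1s^2+c_2s)$ runs over a subfield, with $\chi$ the quadratic character of $\F_q$ and coefficients in $\F_q$. So Theorem \ref{thm_FFquadsum} does not apply as stated. Passing to norms turns it into a sum over $\F_{q_0}$ of a polynomial of degree $2m$, where $q=q_0^m$. The resulting bound $(2m-1)\sqrt{q_0}$ is useless once $m$ is large compared with $\sqrt{q_0}$, for instance $q_0=5$ and $q=5^m$. The alternatives you offer are not arguments either:
\begin{enumerate}
\item[(1)] a root of the quadratic only helps if it lies in $\F_{q_0}$;
\item[(2)] knowing that $f(s)$ and $f(-s)$ lie in the same square class says nothing about the square class of their sum;
\item[(3)] the claim that conjugating the root group produces both square classes is unsupported.
\end{enumerate}

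\textbf{The misstep, and a fix.} You take $\Stab_H(\mathbf{w})$ to be dihedral of order $q_0\pm1$ and conclude that orbit counting fails for $q_0<q$. It does not fail. If $\mathbf{a}\ne0$, you correctly have $\Stab_H(P)=\Stab_H(\mathbf{v})=\Stab_H(\mathbf{w})$ and $Q(\mathbf{w})=-Q(\mathbf{a})\ne0$. An element of $\SO(W)$ fixing the nonsingular vector $\mathbf{w}$ is determined by its restriction to the non-degenerate plane $\mathbf{w}^\perp\cap W$, where it has determinant $1$. Hence $\Stab_H(\mathbf{w})$ embeds in $\SO_2^{\pm}(q)$, which is cyclic of order $q\mp1$ and so prime to $p$. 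Since $p$ divides $|H|=q_0(q_0^2-1)/2$, it divides $[H:\Stab_H(P)]$. By Lemma \ref{Nelson-lemma} applied to $\overline H\unlhd\overline G$, this index divides $q+1$, which is impossible because $p\nmid q+1$. This gives $\mathbf{a}=0$ for every $q_0$ at once, with no character sums.

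\textbf{Comparison with the paper.} The paper argues from the other side. Since $\gcd(|L|,q+1)$ is $q_0+1$ or $2$, a point stabilizer $L_R$ has index dividing $q_0+1$. It cannot be $L$, which fixes no singular point, so it is a Borel subgroup. A Borel subgroup of $\Omega_3(q_0)$ fixes a unique singular point, and that point lies in $\ell^\perp$. Transitivity and the $G$-invariance of $\ell^\perp$ then give $\cB\subseteq\ell^\perp$. Your corrected route avoids Dickson's subgroup list and the fixed-point computation.
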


\begin{proof}

Suppose that such a transitive BLT-set $\cB$ exists. We use the model
\[
 V=\F_{q^2}\oplus\F_q^3,\qquad
 Q(\alpha,(y_1,y_2,y_3))=\alpha^{q+1}-y_1y_3+y_2^2,
\]
in which
\[
 \ell=\{\la(\alpha,0)\ra:\alpha\in\F_{q^2}^*\},\qquad
 \ell^\perp=\PG(\mathbf{0}\oplus\F_q^3).
\]
By Theorem \ref{thm_SO_5(q)}, the stabilizer of
$\ell$ in $\SO_5(q)$ has structure
$(C_{q+1}\times\SO_3(q)).2$. Its perfect residual is the natural
$\Omega_3(q)$-factor on the second summand. Hence
$\Gso^{(\infty)}\leq\Omega_3(q)$.

Put $L:=\Gso^{(\infty)}\cong\Omega_3(q_0)$, where
$q=q_0^m$. The natural three-dimensional $\Omega_3(q_0)$-module is absolutely irreducible; see
\cite{KL129}*{Proposition~2.10.6(i)}. Viewing $\F_q^3$ as the scalar extension
of $\F_{q_0}^3$, the group $\Omega_3(q_0)$ acts irreducibly on $\F_q^3$. Since $L$ is perfect, every one-dimensional $L$-invariant $\F_q$-subspace of $V$ is fixed pointwise and is contained in the first summand. Thus the only projective points fixed by $L$ lie
on the external line $\ell$, but no point of $\ell$ is singular. Hence every
point of $\cB$ has a nonzero component in the second summand $\F_q^3$.

Since $L\unlhd G$ and $G$ is transitive on $\cB$, Lemma
\ref{Nelson-lemma} gives
$[L:L_R]\mid (q+1)$ for every $R\in\cB$. Moreover,
\[
 \gcd(|L|,q+1)=
 \begin{cases}
  q_0+1,&\text{if $m$ is odd},\\
  2,&\text{if $m$ is even}.
 \end{cases}
\]
If $m$ is even, then $[L:L_R]\leq2$. Since $L$ is perfect, this
forces $L_R=L$, contradicting the preceding fixed-point statement. Hence
$m$ is odd and $[L:L_R]\mid q_0+1$. Since $L_R\ne L$, Dickson's
classification \cite{Dickson1904} shows that $L_R$ is a Borel subgroup isomorphic to $E_{q_0}:C_{(q_0-1)/2}$.

Using the model of $\Omega_3(q_0)$ in
\eqref{eqn_Omega3q}, a direct calculation shows that the unique singular point fixed by a standard Borel subgroup of $\Omega_3(q_0)$ is
\[
 X_0=\la(0,(0,0,1))\ra\in\ell^\perp.
\]
After conjugating in $L$, we may take $R=X_0$. Thus
$\cB\cap\ell^\perp\ne\varnothing$. Since $G$ stabilizes
$\ell^\perp$ and is transitive on $\cB$,
\[
 \cB\subseteq\ell^\perp\cap\cQ(4,q).
\]
The conic $\ell^\perp\cap\cQ(4,q)$ has $q+1$ points. Hence equality holds,
and $\cB$ is classical.
\end{proof}

\begin{proposition}\label{prop_case5}
	Suppose that $G$ stabilizes a non-singular point $P$ with $P^\perp\cap\cQ(4,q)\cong Q^-(3,q)$. If $q\geq71$ and the $\PSL_2(q_0)$-condition holds, then $\cB$ is classical.
\end{proposition}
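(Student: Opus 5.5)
The plan is to reduce to the same situation as in Proposition \ref{prop_case4}: find a perfect subgroup $L\cong\Omega_3(q)$ acting naturally on a non-degenerate $3$-space $W$ and trivially on the line $W^\perp$, then force $\cB$ onto the conic $\PG(W)\cap\cQ(4,q)$.

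\textbf{Identifying $L$.} By Theorem \ref{thm_SO_5(q)}(v), $\Gso\leq\Stab_{\SO_5(q)}(P)\cong\mathrm O_4^-(q)$, whose perfect residual is $\Omega_4^-(q)\cong\PSL_2(q^2)$, acting naturally on $P^\perp$. By the $\PSL_2(q_0)$-condition and Lemma \ref{lem_subPSL}, $L:=\Gso^{(\infty)}$ is a quasisimple group with $L/Z(L)\cong\PSL_2(p^t)$, where $t\mid 2e$. Every abelian composition factor of $\Gso$ has order dividing $2(q^2-1)$. I will apply Lemma \ref{lem_primitive} with a prime divisor $r$ of $\Phi_{2e}^*(p)$. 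Since $r$ is coprime to $q-1$ and to $2$, and since $r\mid q+1$ is not automatically excluded, I will instead use that $L$ is a normal subgroup. Lemma \ref{Nelson-lemma} then gives $[L:L_R]\mid q+1$ for every $R\in\cB$.

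\textbf{Case $t=2e$.} Here $L=\Omega_4^-(q)\cong\PSL_2(q^2)$. Its proper subgroups have index at least $q^2+1>q+1$, so $L$ would fix every point of $\cB$. A perfect group fixes pointwise each fixed projective point, and the fixed vectors of $\Omega_4^-(q)$ form only $P$, which is non-singular. This is a contradiction.

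\textbf{Case $t<2e$.} The primitive-divisor argument gives $e\mid t$, so $t=e$ and $L\cong\Omega_3(q)$. Any subgroup $\PSL_2(q)$ of $\PSL_2(q^2)$ is a subfield subgroup. In the orthogonal picture it is the stabilizer in $\Omega_4^-(q)$ of a non-singular vector $\bfv_R\in P^\perp$. Set $W:=\la P,\bfv_R\ra^\perp$. Then $L$ acts on $W$ as the natural absolutely irreducible $\Omega_3(q)$. Being perfect, $L$ acts trivially on the $2$-space $W^\perp$, and $W^\perp$ is exactly its fixed space. Since $G$ normalizes $L$, $G$ stabilizes both $W^\perp$ and $W$.

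\textbf{Locating $\cB$.} First I show $W^\perp$ is an external line. If $W^\perp$ contained a singular point $S$, then $S^\perp\supseteq W$. The points of $\cB$ lie in $W$ (shown below), so they would all be collinear with $S$, which is impossible. Next, as in Proposition \ref{prop_case4}, no point of $\cB$ is fixed by $L$, so $[L:L_R]\mid\gcd(|L|,q+1)=q+1$ with $L_R\neq L$. Dickson's classification then forces $L_R$ to be a Borel subgroup $E_q:C_{(q-1)/2}$.

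Write $R=\la\bfw+\bfu\ra$ with $\bfw\in W$ and $\bfu\in W^\perp$. If $\bfu\neq0$, then $L_R$ fixes $\bfu$, so it fixes the vector $\bfw+\bfu$ itself, and hence fixes $\bfw$. Using \eqref{eqn_Omega3q}, the Borel subgroup fixes no nonzero vector of $W$: the unipotent part fixes only the singular line, and the torus scales that line nontrivially. Hence $\bfw=0$, so $R\in W^\perp$ would be singular, contradicting externality. Therefore $\bfu=0$ and $R\in W$.

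By transitivity of $G$ and $G$-invariance of $W$, we get $\cB\subseteq\PG(W)\cap\cQ(4,q)$. Both sets have $q+1$ points, so they are equal. This conic is cut out by a plane whose perp is the external line $W^\perp$, so $\cB$ is classical.

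\textbf{Expected obstacle.} The main difficulty is the first step. I must pin down precisely which perfect subgroups of $\Omega_4^-(q)$ can occur and exclude intermediate $t$. When $q+1$ is divisible by $r$, the primitive-divisor argument alone may not decide between $t=e$ and $t=2e$. I will then fall back on the index argument of Lemma \ref{Nelson-lemma} combined with Dickson's list for subgroups of $\PSL_2(p^t)$ of index dividing $q+1$.
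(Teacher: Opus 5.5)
\textbf{Gap: the step $t=e$.} For $t<2e$ you write that ``the primitive-divisor argument gives $e\mid t$''. But you have just conceded that a prime $r\mid\Phi_{2e}^*(p)$ divides $q+1$. Knowing only that the abelian composition factors of $\Gso$ have order dividing $2(q^2-1)$ does not stop $r$ from lying in an abelian factor, so Lemma~\ref{lem_primitive} does not put $r$ into $|\PSL_2(p^t)|$.

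The missing ingredient is the one the paper uses. Since $L$ is characteristic in $\Gso$, we have $\Gso\le N_{\mathrm O_4^-(q)}(L)$, and $N_{\mathrm O_4^-(q)}(L)/L$ is a $2$-group. To see this directly: $r\ge 2e+1>2t=|\mathrm{Out}(L)|$, so an element of order $r$ normalizing $L$ acts on $L$ as an inner automorphism, and $C_{\mathrm O_4^-(q)}(L)$ is a $2$-group by Schur's lemma. Hence $r\mid|L|$ and $e\mid t$.

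Your announced fallback does not achieve this. Lemma~\ref{Nelson-lemma} with Dickson's list does eliminate $t\nmid e$, since then $\gcd(|L|,q+1)\le 2(p^{t/2}+1)<p^t+1$. It also eliminates $e/t$ even, since then $\gcd(|L|,q+1)\mid 2$. It cannot eliminate $t\mid e$ with $e/t$ odd: for example $q=p^3$ and $L\cong\PSL_2(p)$, whose Borel subgroups have index $p+1\mid p^3+1$. That subcase could be absorbed rather than excluded. Your later argument only uses that $L$ fixes a non-singular vector of $P^\perp$ and that $L_R$ is a Borel subgroup fixing no nonzero vector of $W$, which holds for $\Omega_3(q_0)$ exactly as in Proposition~\ref{prop_case4}. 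But as written, everything after this point is stated only for $L\cong\Omega_3(q)$.

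\textbf{Circularity in ``Locating $\cB$''.} You derive externality of $W^\perp$ from $\cB\subseteq\PG(W)$, and derive $\cB\subseteq\PG(W)$ from externality. Your appeal to Proposition~\ref{prop_case4} for $L_R\ne L$ also uses externality, since that is how it is argued there. Both points can be settled without externality. If $L$ fixed one point of $\cB$, it would fix all of them by normality and transitivity. That would put $q+1$ singular points on the non-degenerate line $\PG(W^\perp)=\PG(C_V(L))$, which is impossible. If $\bfu\ne0$, your argument gives $\bfw=0$, so $R\in\PG(W^\perp)$ is fixed by $L$, giving the same contradiction.

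\textbf{Comparison with the paper.} With these repairs, your endgame is a valid alternative to the paper's. The paper writes down $C_V(L)=\la(1,0),(0,J_\eta)\ra$ for the two classes of $\PSL_2(q)$ in $\PSL_2(q^2)$. It decides from the square class of $\eta^{q+1}\delta^2$ whether this line is external or secant, and then quotes Propositions~\ref{prop_case4} and~\ref{prop_case2}. You instead rerun the Borel argument and obtain externality from the BLT property itself: a singular point of $W^\perp$ would be collinear with every point of $\cB$. This avoids both the coordinates and the case split.
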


\begin{proof}
Suppose that such a transitive BLT-set $\cB$ exists. Let $V_1$ be the four-dimensional $\F_q$-space of Hermitian $2\times2$ matrices over $\F_{q^2}$, i.e., $V_1=\left\{
 \begin{pmatrix}a&\alpha\\ \alpha^q&b\end{pmatrix}:
 a,b\in\F_q,\ \alpha\in\F_{q^2}
 \right\}$. Use the model
\[
 V=\F_q\oplus V_1,
\]
and write $A^{(q)}$ for the matrix obtained by raising each entry of $A$ to the $q$-th power. Take
\[
 P=\la(1,0)\ra,\qquad Q(x,M)=x^2-\det(M).
\]
Thus $P^\perp=\PG(V_1)$ and $P^\perp\cap\cQ(4,q)\cong Q^-(3,q)$. Moreover,
$\Stab_{\SO_5(q)}(P)\cong\mathrm O_4^-(q)$. The preimage
$2.\Omega_4^-(q)\cong\SL_2(q^2)$ acts on $V_1$ by
$M\longmapsto AM(A^{(q)})^{\top}$ for $A\in\SL_2(q^2)$.

By Lemma \ref{lem_subPSL} together with $\Omega_4^-(q)\cong\PSL_2(q^2)$, we obtain $L:=\Gso^{(\infty)}\cong\PSL_2(p^t)$ for some $t\mid2e$. Since $L$ is characteristic in $\Gso$, we have
$\Gso\leq N_{\mathrm O_4^-(q)}(L)$. Then $N_{\mathrm{O}_4^-(q)}(L) / L$ is a $2$-group, cf. \cites{Dickson1904,BHR407}. Thus all abelian composition factors of $\Gso$
have order $2$. By Lemma \ref{lem_primitive}, we obtain $e\mid t$ (automatically when $e=1$). Since $t\mid2e$, we have $t\in\{e,2e\}$. Consequently, we have
\begin{itemize}
	\item[(i)] $ L\cong\PSL_2(q)$ or
	\item[(ii)] $L=\Omega_4^-(q)\cong\PSL_2(q^2)$.
\end{itemize}
 We now exclude case \textup{(ii)}. Since $L\unlhd G$, every
$L$-orbit on $\cB$ has length dividing $q+1$. The group $\Omega_4^-(q)$ fixes no points of $\cQ(4,q)$, while the two $\Omega_4^-(q)$-orbits have respective sizes $q^2+1$ and $q(q^2+1)$; see \cite{KL129}*{Lemma 2.10.5}. Both lengths exceed $q+1$,  so the case \textup{(ii)} is impossible. Thus $L\cong\PSL_2(q)\cong\Omega_3(q)$.

There are two conjugacy classes of $\PSL_2(q)$ in $\PSL_2(q^2)$; see
\cites{Dickson1904,BHR407}. Let $\omega$ be a primitive element
of $\F_{q^2}$ and set
\[
 D_\eta=\begin{pmatrix}\eta&0\\0&1\end{pmatrix},\qquad
 \eta\in\{1,\omega\}.
\]
Their full preimages in $\SL_2(q^2)$ are represented by $\{D_\eta A D_\eta^{-1}:A\in\SL_2(q)\}$, where $\eta\in\{1,\omega\}$. After conjugating $L$ in $\Omega_4^-(q)$, choose $\eta$ so that this subgroup is the full preimage of $L$. Choose $\delta\in\F_{q^2}^{\times}$ with $\delta^q=-\delta$ and put
\[
 J_\eta:=\begin{pmatrix}0&\eta\delta\\-\eta^q\delta&0\end{pmatrix}.
\]
A direct calculation gives
\[
 C_V(L)=\la(1,0),(0,J_\eta)\ra_{\F_q}.
\]
Set $\ell_\eta:=\PG(C_V(L))$. Since
$L=\Gso^{(\infty)}\unlhd G$, both $\ell_\eta$ and
$\ell_\eta^\perp$ are $G$-invariant. On $C_V(L)$,
\[
 Q(x,cJ_\eta)=x^2-c^2\eta^{q+1}\delta^2,
 \qquad \eta^{q+1}\delta^2\in\F_q^\times.
\]
Now $\delta^2\in\blacksquare_q$, since
$(\delta^2)^{(q-1)/2}=\delta^{q-1}=-1$, and
$\omega^{q+1}\in\blacksquare_q$. Thus
$\eta^{q+1}\delta^2$ is a nonsquare for $\eta=1$ and a square for
$\eta=\omega$. In the first case, $C_V(L)$ is anisotropic, so
$\ell_\eta$ is external, and then $\cB$
is classical by Proposition \ref{prop_case4}. In the second case, $\ell_\eta$ is a secant line whose two singular points are non-collinear because the restriction of $Q$ is
non-degenerate. The group $G$ stabilizes this pair, and Proposition
\ref{prop_case2} rules out the case. Therefore $\cB$ is classical.
\end{proof}

\begin{proposition}\label{prop_case6}
	Suppose that $G$ stabilizes a non-singular point $P$ with $P^\perp\cap\cQ(4,q)\cong Q^+(3,q)$. If $q\geq71$ is odd and the $\PSL_2(q_0)$-condition holds, then $\cB$ is either classical
or a Kantor semifield BLT-set.
\end{proposition}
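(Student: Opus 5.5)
We use the model $V=\F_q\oplus M_2(q)$ with $Q(x,M)=x^2-\det(M)$ and $P=\la(1,0)\ra$, so that $P^\perp=\PG(M_2(q))$ carries the hyperbolic quadric $\{\det M=0\}$. Here $\Omega_4^+(q)$ has preimage $\SL_2(q)\circ\SL_2(q)$, acting by $M\mapsto AMC^{-1}$.

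\textbf{Step 1: identify $L:=\Gso^{(\infty)}$.} By Lemma \ref{lem_unique_nonabelian_factor}, the factor $\PSL_2(q_0)$ is the unique non-abelian composition factor of $\Gso$. Hence $L$ is a quasisimple or simple group covering $\PSL_2(p^t)$, with $t\mid e$. The abelian composition factors of $\mathrm O_4^+(q)$ have order dividing $2(q-1)$, so Lemma \ref{lem_primitive} gives $t=e$. Consequently the image of $L$ in $\Omega_4^+(q)\cong \SL_2(q)\circ\SL_2(q)$ projects onto $\PSL_2(q)$ in at least one factor, and onto a solvable group in the other. Since $L$ is perfect, $L$ is either
\begin{itemize}
\item[(a)] one factor $\SL_2(q)$ acting by left (or right) multiplication, or
\item[(b)] a diagonal subgroup $\{(A,A^\sigma)\}$, where $\sigma$ is an automorphism of $\SL_2(q)$.
\end{itemize}
Up to conjugation in $\mathrm O_4^+(q)$, which allows swapping the factors and conjugating, in case (b) we may take $\sigma$ to be a field automorphism twisted by conjugation.

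\textbf{Step 2: case (a).} This is ruled out exactly as in the proof of Lemma \ref{lem_unique_nonabelian_factor}. For $(x,M)\in\cB$ with $x\ne0$, a transvection $u$ moves the point and gives $B=(2-\tr u)x^2=0$. If $x=0$, some $u$ moves $\la M\ra$ and $\det(M+uM)=0$. Either way $\cB$ contains collinear points, a contradiction.

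\textbf{Step 3: case (b) with $\sigma$ inner.} We may take $L=\{M\mapsto AMA^{-1}\}$. Then $C_V(L)=\la(1,0),(0,I)\ra$. This line is $G$-invariant, since $L\unlhd G$, and its perpendicular is also $G$-invariant. On it $Q(x,cI)=x^2-c^2$, so it is a secant line through two non-collinear singular points stabilized by $G$; Proposition \ref{prop_case2} excludes this. Alternatively, $L$ acts on the trace-zero matrices as $\Omega_3(q)$, fixing $(0,I)$ and $(1,0)$, and then $G$ fixes the pair of singular points $\la(1,\pm I)\ra$.

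\textbf{Step 4: case (b) with $\sigma$ a nontrivial field twist $A\mapsto A^{(p^s)}$.} This is the main obstacle. Here $L$ fixes only $P$ (up to the radical). We would use Lemma \ref{Nelson-lemma}: $L$-orbits on $\cB$ have length dividing $q+1$, and since $L\cong\SL_2(q)$ or $\PSL_2(q)$ and $q>5$, the stabilizer $L_R$ is all of $L$ or a Borel subgroup. If $L_R=L$, then $R$ is a fixed singular point, which is impossible, so $L_R$ is a Borel subgroup. A Borel subgroup $\{(A,A^\sigma)\}$ with $A$ lower triangular fixes a singular point $\la(x,M)\ra$ only when $M$ is a rank-one matrix $\bfe_2\bfe_2^\top$-type, forced by the common eigenvectors, with $x=0$. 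Hence $R\in P^\perp$, the orbit $R^L$ lies in $P^\perp$, and by transitivity $\cB\subseteq P^\perp$. The same argument in Step 3 or the untwisted subcase likewise puts $\cB$ in the hyperplane $P^\perp$.

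\textbf{Step 5: conclusion.} Once $\cB$ is contained in the hyperplane $P^\perp$, Theorem \ref{thm_charact} shows that $\cB$ is classical or a Kantor semifield BLT-set. The delicate points are the fixed-point computation for twisted Borel subgroups, together with the check that $x\neq0$ is impossible. For the latter we would reuse the trace computation $B((x,M),g(x,M))$: an element of $L_R$ fixing $R$ must fix the vector, and then the homomorphism of Theorem \ref{thm_square_class}, trivial on the perfect group $L$, forces $B(\bfv,g\bfv)$ to be a fixed nonsquare class for all $g\notin L_R$. Applying this to unipotent $g$ with $B(\bfv,g\bfv)=(2-\tr)\,x^2$-type expressions yields a contradiction when $x\ne0$.
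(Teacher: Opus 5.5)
Your overall strategy matches the paper's: a Goursat split, transvections for the one-factor case, Lemma~\ref{Nelson-lemma} forcing a Borel stabilizer, a fixed-point computation putting a point of $\cB$ in $P^\perp$, and then Theorem~\ref{thm_charact}. However, Step~3 rests on a false normalization.

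\textbf{Step 3.} In the inner case $A^\sigma=D^{-1}AD$ with $D\in\GL_2(q)$. The conjugation that would turn $D$ into $I$ exists only when $\det D\in\square_q$. In general, in your convention, $C_V(L)=\la(1,0),(0,D)\ra$ and $Q(x,cD)=x^2-c^2\det D$. So for $\det D\in\blacksquare_q$ the line $\PG(C_V(L))$ is external, not secant. The isometry type of $C_V(L)$ is preserved under conjugation in $\PGaO_5(q)$, so no choice of coordinates removes this subcase, and Proposition~\ref{prop_case2} does not apply to it.

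You have two ways to repair this:
\begin{itemize}
\item Split by the square class of $\det D$, as in the proof of Proposition~\ref{prop_case5}: the secant case is excluded by Proposition~\ref{prop_case2}, and the external case gives a classical set by Proposition~\ref{prop_case4}.
\item Do what the paper does and run the Borel computation for the untwisted automorphism as well. If a Borel subgroup of $L$ fixes a singular point $\la(x,M)\ra$ with $x\neq0$, it must fix the vector, and then $M\in\F_q D$. So the point lies on $\PG(C_V(L))$ and is fixed by all of $L$. By normality and transitivity, $L$ then fixes every point of $\cB$. This is impossible, because that line contains at most two singular points. This handles both square classes at once.
\end{itemize}
The second route also shows that ``if $L_R=L$ \dots\ impossible'' in Step~4 needs this extra step in the untwisted square case, where $L$ does fix two singular points. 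In the twisted case, your normalization $D=I$ is harmless: replacing $M$ by $MD^{-1}$ absorbs $D$ into the coordinates of the point.

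\textbf{Step 1.} Your derivation of $t=e$ is invalid. The abelian composition factors of $\mathrm O_4^+(q)$ do not bound those of the subgroup $\Gso$, because each $\SL_2(q)$-factor contains nonsplit tori of order $q+1$. For example, $T\circ\SL_2(q_0)$ has order divisible by $q+1$ even when $q_0<q$.
\begin{itemize}
\item In case (a) this does no harm, since the transvection argument works for $\SL_2(q_0)$ with any $q_0$.
\item In case (b) your Borel-index argument needs $L\cong\PSL_2(q)$. The correct justification is the normalizer argument. Since $L$ is characteristic in $\Gso$, we have $\Gso\le N_{\Stab_{\SO_5(q)}(P)}(L)$. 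Modulo $L$ this normalizer is a $2$-group: it comes from the subfield normalizers $\PGL_2(q_0)$ in each factor, a central centralizer, factor interchange, and the $2$-group $\Stab_{\SO_5(q)}(P)/\Omega_4^+(q)$. Only then does Lemma~\ref{lem_primitive} yield $t=e$.
\end{itemize}

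\textbf{Step 5.} Your fallback of getting $x=0$ from Theorem~\ref{thm_square_class} via $(2-\tr g)x^2$ fails for a diagonal group. There $B\bigl((x,M),(x,AM(A^\sigma)^{-1})\bigr)=4x^2-\det\bigl(M+AM(A^\sigma)^{-1}\bigr)$, which has no such form. You do not need it, though. For a nontrivial field twist the Borel subgroup fixes no nonzero vector of $M_2(q)$, so the twisted Borel computation already gives $x=0$.
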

\begin{proof}

Suppose that such a transitive BLT-set $\cB$ exists. We use the model
\[
 V=\F_q\oplus M_2(q),\qquad P=\la(1,\begin{pmatrix}
 	0 &0 \\ 0 &0
 \end{pmatrix})\ra,\qquad
 Q(x,M)=x^2-\det(M).
\]
Then $\Stab_{\SO_5(q)}(P)$ is isomorphic
to $\mathrm O_4^+(q)$ by Theorem \ref{thm_SO_5(q)}. Its perfect residual is $\Omega_4^+(q)\cong\SL_2(q)\circ\SL_2(q)$, which acts on $V$ by
\[
 (A\circ C):\, (x,M)\mapsto (x,AMC^{\top}).
\]
Since
$\Gso\leq\Stab_{\SO_5(q)}(P)$ and
$\Gso^{(\infty)}$ is perfect, we have
$L:=\Gso^{(\infty)}\leq\Omega_4^+(q)$. By Lemma
\ref{lem_unique_nonabelian_factor}, $L$ has a unique non-abelian composition
factor $\PSL_2(q_0)$, where $q=q_0^m$.  By applying 
Goursat's lemma \cite[Proposition 1.6]{Serre} to the preimage of $L$ in $\SL_2(q)\times\SL_2(q)$, we deduce that up to conjugacy, one of the following cases occurs:
\begin{enumerate}
\item[(I)] 
 $L=\{I_2\circ A:A\in\SL_2(q_0)\}$ after interchanging the factors if necessary;
 \item[(II)]$L=\{A\circ\sigma(A):A\in\SL_2(q_0)\}
 \cong\PSL_2(q_0)$ for some $\sigma\in\Aut(\SL_2(q))$. 
\end{enumerate}

We first exclude Case \textup{(I)}. Take $\la(x,M)\ra\in\cB$. Put
\[
 u_1=\begin{pmatrix}1&1\\0&1\end{pmatrix},\qquad
 u_2=\begin{pmatrix}1&0\\1&1\end{pmatrix}.
\]
Both matrices are nonidentity elements of trace $2$ in
$\SL_2(q_0)$. If $x\ne0$, then $M$ is invertible, and each
$u_i$ maps $\la(x,M)\ra$ to a point
$\la(x,Mu_i^\top)\ra$ distinct from $\la(x,M)\ra$. If
$x=0$, then $M$
has rank one. Each of $u_1^\top$ and $u_2^\top$ has a unique invariant row line, and these two lines are distinct, so at most one of them fixes $\la(x,M)\ra$.
Choose $g\in\{u_1,u_2\}$ that does not fix $\la(x,M)\ra$. Then
$I_2\circ g\in L$ sends $\la(x,M)\ra$ to the distinct point
$\la(x,Mg^\top)\ra$. Since
$x^2=\det(M)$,
\[
 B\bigl((x,M),(x,Mg^\top)\bigr)
 =\bigl(4-\det(I_2+g^\top)\bigr)x^2
 =\bigl(2-\tr(g^\top)\bigr)x^2=0.
\]
Thus two distinct points of $\cB$ are collinear, a contradiction.
Hence Case \textup{(I)} is impossible.

We are therefore in Case \textup{(II)}. Write $q=p^e$ and $q_0=p^t$,
where $t\mid e$.  Here $L$ is a diagonal subfield
subgroup. For either projective factor, the standard subfield-normalizer
equality
\[
N_{\PGL_2(q)}(\PSL_2(q_0))=\PGL_2(q_0)
\]
shows that the normalizer of the corresponding projection of $L$ has
quotient of order at most two; see \cite{Dickson1904}*{\S255}. Moreover,
$C_{\Omega_4^+(q)}(L)\leq Z(\Omega_4^+(q))$; factor interchange has
order two, and $\Stab_{\SO_5(q)}(P)/\Omega_4^+(q)$ is a $2$-group.  Hence $N_{\Stab_{\SO_5(q)}(P)}(L)/L$ is a $2$-group. Since $L=\Gso^{(\infty)}$ is characteristic
in $\Gso$, we have $\Gso\leq N_{\Stab_{\SO_5(q)}(P)}(L)$, so every
abelian composition factor of $\Gso$ has order $2$. By Lemma \ref{lem_primitive} and the primitive-divisor argument at the beginning of this subsection, we deduce that $t=e$,
$q_0=q$, and
\[
 L=\{A\circ\sigma(A):A\in\SL_2(q)\}
 \cong\PSL_2(q)\cong\Omega_3(q).
\]

Every automorphism of $\SL_2(q)$ is a product of a field automorphism
and conjugation by an element of $\GL_2(q)$. Hence we may write
\[
 \sigma(A)= D^{-1}\psi(A)D,
\]
where $D\in\GL_2(q)$ and $\psi$ is the field automorphism obtained by
applying $a\mapsto a^{p^i}$ entrywise for some $0\leq i<e$.

Put $\eta:=\det(D)$ and choose $P_\infty\in\cB$. Since $L\unlhd G$
and $G$ is transitive on $\cB$, Lemma \ref{Nelson-lemma} gives
\[
 [L:\Stab_L(P_\infty)]\mid (q+1).
\]
By Dickson's classification \cite{Dickson1904}, $\Stab_L(P_\infty)$ is
either $L$ or a Borel subgroup isomorphic to $E_q:C_{(q-1)/2}$.
In either case, the stabilizer contains a Borel subgroup.
After conjugation in $L$, we may assume that the full preimage in
$\SL_2(q)$ of one such Borel subgroup consists of the matrices
\[
A_{\lambda,z}:=\begin{pmatrix}\lambda&0\\z&\lambda^{-1}\end{pmatrix},
 \qquad \lambda\in\F_q^*,\quad z\in\F_q.
\]
Write $P_\infty=\la(u,MD^{-\top})\ra$ and
$M=\begin{pmatrix}a&b\\c&d\end{pmatrix}$.\footnote{This choice of coordinates absorbs the conjugating matrix $D$ in the action, as in \eqref{eqn_Alaz}.} For every
$\lambda\in\F_q^*$ and $z\in\F_q$, there is a scalar
$\kappa_{\lambda,z}\in\F_q^*$ such that
\begin{equation}\label{eqn_Alaz}
 \bigl(u,A_{\lambda,z}M\psi(A_{\lambda,z}^{\top})D^{-\top} \bigr)
 =\kappa_{\lambda,z}(u,MD^{-\top}).
\end{equation}
 Observe that $A_{\lambda,z}M\psi(A_{\lambda,z}^{\top})$ is
\[
\begin{pmatrix}
 \lambda^{p^i+1}a&
 \lambda a z^{p^i}+\lambda^{1-p^i}b\\
 \lambda^{p^i}za+\lambda^{p^i-1}c&
 z^{p^i+1}a+\lambda^{-1}cz^{p^i}
 +\lambda^{-p^i}zb+\lambda^{-p^i-1}d
\end{pmatrix}.
\]

We claim that $u=0$. Suppose, to the contrary, that $u\ne0$. The first
coordinate then forces $\kappa_{\lambda,z}=1$ for every $\lambda,z$. By
setting $\lambda=1$ and varying $z\in \F_q$, we obtain 
\[
 a=0,\qquad cz^{p^i}+bz=0
 \quad\text{for every }z\in\F_q.
\]
If $i\ne0$, this identity forces $b=c=0$: otherwise
$z^{p^i-1}$ would be constant on $\F_q^*$, which is impossible for
$0<i<e$. Next, taking $z=0$ and varying $\lambda\in\F_q^\times$ in
\eqref{eqn_Alaz} gives $d=0$, since $0<p^i+1<q-1$. Thus $M=0$,
contradicting $Q(u,MD^{-\top})=0$. It remains to consider $i=0$. The
same identity gives $c=-b$, and the diagonal elements of the Borel
subgroup give $d=0$. If $b=0$, then again $M=0$, a contradiction. Hence
\[
 M=bJ,\qquad J=\begin{pmatrix}0&1\\-1&0\end{pmatrix},\qquad b\ne0.
\]
Since $AJA^\top=J$ for every $A\in\SL_2(q)$, the whole group $L$
fixes $P_\infty$. As $L\unlhd G$ and $G$ is transitive on $\cB$, it
follows that $L$ fixes every point of $\cB$. Since
$L\cong\PSL_2(q)$ is perfect, its action on any invariant
one-dimensional subspace is trivial. Hence every projective point
fixed by $L$ lies in $\PG(C_V(L))$. The fixed space of $L$ in this case is
\[
 C_V(L)=\{(x,cJD^{-\top}):x,c\in\F_q\}.
\]
The restriction of the quadratic form to this space is
$Q(x,cJD^{-\top})=x^2-\eta^{-1}c^2$. The projective line
$\PG(C_V(L))$ is therefore either external or secant, and contains at most two
singular points. It cannot contain all $q+1$ points of $\cB$. This contradiction proves that $u=0$. Hence
$P_\infty\in P^\perp$.
The hyperplane $P^\perp$ is $G$-invariant. Since $G$ is transitive on
$\cB$, we have $\cB\subseteq P^\perp$. Theorem \ref{thm_charact} now shows
that $\cB$ is either classical or a Kantor semifield BLT-set.
\end{proof}

\section{The \texorpdfstring{$A_5$}{A5}-case}\label{sec:4}
 Assume now that the $A_5$-condition introduced in Section 3 holds. The defining-characteristic case
was treated there.  The classification in \cites{LawPenttila2003,Betten2013,SmallBLT} establishes the conclusion in Theorem \ref{thm_main} when $q<71$.
Hence, in what follows, we will assume that
$p\ne5$ and $q\geq71$, and set
\[
 H:=\Gso^{(\infty)}\unlhd G.
\]
Let $\xi$ be as in Lemma \ref{lem_xisquare}; thus
$\big\la\xi \big\ra=\{y\in\F_{q^2}^{\times}:y^{q+1}=1\}$ is cyclic of order
$q+1$.  Let $\nu\in\blacksquare_q$ be fixed for the remainder of this section. By Theorem \ref{thm_charact}, we may assume that the vectors representing
$\cB$ span $V$; equivalently,
\begin{equation}\label{eqn_A5_full_span}
 \mathrm{Span}_{\F_q}\{\bfv:\la\bfv\ra\in\cB\}=V.
\end{equation}
 If $\PG(W)$ met $\mathcal B$ for a proper $G$-invariant subspace $W<V$, the transitivity  would imply  $\mathcal B\subseteq\PG(W)$, contradicting \eqref{eqn_A5_full_span}.
Thus
\begin{equation}\label{eqn_A5_avoids_invariant_subspaces}
 \cB\cap\PG(W)=\emptyset
 \qquad\text{for every proper $G$-invariant subspace $W<V$.}
\end{equation}
 Under the full-span hypothesis \eqref{eqn_A5_full_span}, it is enough to consider the geometric types (iv) and (vi) of Theorem \ref{thm_SO_5(q)}. More precisely, the geometric types (i)–(iii) and (v) force $q\in \{79, 239\}$, as shown in the following lemma. 
	\begin{lemma}\label{lem_A5_geometric_reduction} 
		Under the full-span hypothesis \eqref{eqn_A5_full_span}, every geometry of type \textup{(i)--(iii)} or \textup{(v)} in Theorem \ref{thm_SO_5(q)} forces $q\in \{79,239\}$. The remaining cases over these two fields are excluded by using Magma~\cite{Magma}.
	\end{lemma}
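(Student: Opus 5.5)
The plan is to turn transitivity into an arithmetic condition on $q$ that is strong enough to leave only finitely many $q$. We keep the notation $H=\Gso^{(\infty)}$, so that under the $A_5$-condition $H$ is $A_5$ or $2.A_5$, the only non-abelian composition factors of $\Gso$ are copies of $A_5$, and $p\ne5$. Transitivity together with \eqref{eqn_divisiblecond} gives $\frac{q+1}{\gcd(e,q+1)}\mid|\Gso|$. In each of the geometries (i)--(iii) and (v), I will bound $|\Gso|$ by $|H|$ times a factor involving only $p$, $q-1$ and small numbers. The factor $q+1$ then has nowhere to go except the small constant $|N(H)|$.

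\emph{Step 1: bounding the normalizer.} Since $H$ is characteristic in $\Gso$, we have $\Gso\le N_K(H)$, where $K$ is the stabilizer listed in Theorem~\ref{thm_SO_5(q)}.
\begin{itemize}
\item In type (i) we write $K=E_q^3:(C_{q-1}\times\SO_3(q))$. Modulo the unipotent radical, the image of $\Gso$ lies in $C_{q-1}\times N_{\SO_3(q)}(A_5)$. Since $C_{\SO_3(q)}(A_5)=1$ and $N_{\SO_3(q)}(A_5)\le S_5$, this gives $|\Gso|\mid q^3(q-1)\cdot 120$.
\item In type (ii) the same reasoning on $W=\la\bfe_1,\bfe_5\ra^\perp$ gives $|\Gso|\mid 2(q-1)\cdot120$.
\item In type (iii) the Levi image normalizes a $2.A_5\le\SL_2(q)$. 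Its normalizer in $\GL_2(q)$ is scalars times $2.A_5$ or $2.S_5$, so $|\Gso|\mid q^3(q-1)\cdot 240$.
\item In type (v), $\Omega_4^-(q)\cong\PSL_2(q^2)$. Dickson's classification shows that $N_{\mathrm O_4^-(q)}(A_5)$ has order dividing $4\cdot120$; in particular it contains no torus of order $q^2+1$ or $q+1$.
\end{itemize}
Since $\gcd(q+1,q^3(q-1))=2$, in every case $\frac{q+1}{\gcd(e,q+1)}$ divides a number of the form $2^a\cdot3\cdot5$ with $a$ small. I also expect to use the full-span hypothesis \eqref{eqn_A5_full_span} through \eqref{eqn_A5_avoids_invariant_subspaces}. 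Points of $\cB$ cannot lie in the fixed point, line, hyperbolic pair or hyperplane. This rules out the degenerate possibility that $H$ fixes points of $\cB$, and so forces the $H$-orbits (whose lengths divide $\gcd(|H|,q+1)$ by Lemma~\ref{Nelson-lemma}) to be non-trivial. That in turn pins down where the $2$-part of $q+1$ sits.

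\emph{Step 2: reducing to $e=1$.} Suppose $e>1$. By Lemma~\ref{lem_primitive}, every prime divisor $r$ of $\Phi^*_{2e}(p)$ divides $|\Gso|$ and satisfies $r\equiv1\pmod{2e}$. Every composition factor of $\Gso$ is either $A_5$ or of order dividing $2p(q-1)$, and $r$ does not divide $p(q-1)$ and is odd. Hence $r\mid 60$, so $r\in\{3,5\}$, which forces $2e\mid r-1\le4$, i.e. $e=2$ and $r=5$. But $p\ne5$ and the small cases with $p^2\equiv-1\pmod5$ and $q\ge71$ are also excluded by the divisibility bound. So $e=1$, and then $q+1$ itself divides $2^a\cdot 3\cdot 5$ with the sharp exponent from Step~1; I expect this to give $q+1\mid240$.

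\emph{Step 3: enumeration and computation.} Combining $q+1\mid 240$ with $q\equiv\pm1\pmod{10}$, $q$ prime and $q\ge71$, we get $q+1\in\{80,120,240\}$, hence $q\in\{79,239\}$ since $119=7\cdot17$. For these two fields I will run Magma. For each conjugacy class of subgroups $\Gso$ of the relevant stabilizer with $H\cong A_5$ or $2.A_5$, we search for unions of $H$-orbits of total size $q+1$ that satisfy Lemma~\ref{lem_disc}. The search is pruned by Theorem~\ref{thm_square_class}: for every $g\in H$ moving a point $\la\bfv\ra$, we need $B(\bfv,g\bfv)\in-\frac{\operatorname{disc}(Q)}2\blacksquare_q$.

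The main obstacle is Step~1 made sharp. We must control the $2$-parts exactly, including the factor $\gcd(e,q+1)$ and the extra $2$'s coming from $\SO$ versus $\Omega$ and from $2.A_5$ versus $A_5$. Otherwise the bound becomes $q+1\mid480$ or $960$, which would admit $q=479$ and spoil the conclusion. At that point I would first try the orbit-length argument from Lemma~\ref{Nelson-lemma} together with \eqref{eqn_A5_avoids_invariant_subspaces} to remove one factor of $2$.
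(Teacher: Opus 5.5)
There is a genuine gap, and it is the one you flag at the end without closing. Because $\gcd(q+1,c\,q^3(q-1))=\gcd(q+1,2c)$, every superfluous factor of $2$ in your Step~1 bounds doubles the final bound on $q+1$. Your bounds are $|\Gso|\mid 240(q-1)$ in type \textup{(ii)}, $|\Gso|\mid 240q^3(q-1)$ in type \textup{(iii)}, and $|\Gso|\mid 480$ in type \textup{(v)}. For $e=1$ these give only $q+1\mid480$, and $q=479$ (prime, $\equiv-1\pmod{10}$) survives. As written, you prove $q\in\{79,239,479\}$, not the lemma.

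Your suggested remedy does not supply the missing $2$. $H$-orbits whose lengths divide both $|H|$ and $q+1=480$ are perfectly consistent with Lemma~\ref{Nelson-lemma}. Avoiding invariant subspaces only tells you these orbits are nontrivial.

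The missing idea is that the relevant $A_5$ or $2.A_5$ is self-normalizing modulo scalars. No element of $\SO_3(q)$, $\GL_2(q)$ or $\SL_2(q^2)$ can induce the outer automorphism. That automorphism interchanges the two classes of elements of order $5$. Their traces are $b_5$ and $-1-b_5$ on the $2$-dimensional module, and $1+b_5$ and $-b_5$ on the $3$-dimensional one. These differ because $(1+2b_5)^2=5\neq0$ for $p\ne5$, whereas conjugation preserves trace.

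Combine this with Schur's lemma: the only scalar in $\SO_3(q)$ is $I_3$, and $C_{\SL_2(q^2)}(2.A_5)=\{\pm I_2\}$. This gives:
\begin{itemize}
\item $N_{\SO_3(q)}(A_5)=A_5$;
\item $N_{\GL_2(q)}(2.A_5)=Z(\GL_2(q))\,2.A_5$, of order $60(q-1)$;
\item $N_{\Omega_4^-(q)}(A_5)=A_5$.
\end{itemize}
Hence $|\Gso|$ divides $60q^3(q-1)$, $120(q-1)$, $60q^3(q-1)$ and $240$ in types \textup{(i)}, \textup{(ii)}, \textup{(iii)}, \textup{(v)} respectively. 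So $q+1\mid240$ in every case, which is how the paper concludes. Even granting your $Z\cdot 2.S_5$ in type \textup{(iii)}, its order is $120(q-1)$, not $240(q-1)$, since it meets the scalars in $\{\pm I_2\}$.

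The rest of your outline is sound.
\begin{itemize}
\item Discarding the unipotent factor via $\gcd(q+1,q^3)=1$ works in place of the paper's use of full span to get $\Gso\cap R=1$.
\item Your primitive-divisor reduction to $e=1$ goes through: the odd number $(p^2+1)/2$ is coprime to $p(p^2-1)$, so it divides $15$, forcing $p=3$.
\item The final Magma step matches the paper's.
\end{itemize}
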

	
	\begin{proof}
		In types \textup{(i)} and \textup{(iii)}, let $R$ be the unipotent
		radical and $\rho$ the Levi projection.  Since $G$ preserves the relevant point or line, it normalizes $R$. Hence $\Gso\cap R\unlhd G$, and its orbits on $\mathcal B$ have a common $p$-power length dividing $q+1$. A unipotent element fixing
		$\la\bfv\ra$ fixes $\bfv$; hence \eqref{eqn_A5_full_span} gives
		$\Gso\cap R=1$, and $\rho$ is injective on $\Gso$.

	    By the stabilizer structures in Theorem
	    \ref{thm_SO_5(q)} and Lemma \ref{lem_unique_nonabelian_factor},
	    exactly one non-abelian composition factor occurs in these cases, and it is isomorphic to $A_5$. It follows that $H\cong A_5$ in types \textup{(i)},
		\textup{(ii)}, and \textup{(v)}, and $H\cong2.A_5$ in type \textup{(iii)}. By \cite{BHR407}*{Section 4, Table 4.4}, there are two $\Omega_3(q)$-conjugacy classes of subgroups isomorphic to $A_5$.  These two classes are fused in $\SO_3(q)$. Moreover, \[N_{\SO_3(q)}(A_5)=A_5.\]
       	Similarly, in type \textup{(iii)}, for $L\cong2.A_5$ acting faithfully on a two-dimensional module, we have
		\[
		 N_{\GL_2(q)}(L)=Z(\GL_2(q))L,
		 \qquad |N_{\GL_2(q)}(L)|=60(q-1).
		\]
		
		The preceding normalizer identities give
		$|\Gso|\mid60(q-1)$ in types \textup{(i)} and \textup{(iii)}, and
		$|\Gso|\mid120(q-1)$ in type \textup{(ii)}.

		For type \textup{(v)}, identify
		$\Omega_4^-(q)$ with $\PSL_2(q^2)$ and let
		$\widetilde H\cong2.A_5$ be the inverse image of $H$ in
		$\SL_2(q^2)$. Conjugation in $\SL_2(q^2)$ preserves trace and therefore cannot
		interchange the two conjugacy classes of elements of order five in
		$\widetilde H$. Thus every element of its normalizer induces an inner
		automorphism, and Schur's lemma gives
		$N_{\SL_2(q^2)}(\widetilde H)=\widetilde H$. Passing to the projective quotient gives
		$N_{\Omega_4^-(q)}(H)=H$. Since $H$ is characteristic in $\Gso$ and
		$|\mathrm O_4^-(q):\Omega_4^-(q)|=4$, we have $|\Gso|\mid240$. Hence
		\begin{equation}\label{eqn_A5_geometric_normalizer_bounds}
		 |\Gso|\mid120(q-1)
		 \quad\text{in types \textup{(i)}--\textup{(iii)}},
		 \qquad
		 |\Gso|\mid240
		 \quad\text{in type \textup{(v)}}.
		\end{equation}
		
		Suppose that $e>1$, and let $r$ be a primitive prime divisor of
		$p^{2e}-1$. By applying Lemmas \ref{lem_Zsigmondyprime} and \ref{lem_primitive},
		together with \eqref{eqn_A5_geometric_normalizer_bounds}, we obtain $r\mid240$; in types \textup{(i)--(iii)}, use
		$\operatorname{ord}_r(p)=2e$ to obtain $r\nmid q-1$. Since
		$2e\mid r-1$, we must have $(r,e)=(5,2)$. Now
		\eqref{eqn_divisiblecond} and
		$\gcd((p^2+1)/2,p^2-1)=1$ give $(p^2+1)/2\mid240$, whence $p=3$ and
		$q=9$, contradicting $q\geq71$. Thus $e=1$.
		
		Next, it follows from Lemma \ref{lem_primitive}, \eqref{eqn_orthogonal_core}, and
		$\gcd(q+1,q-1)=2$ that
		$(q+1)\mid 240$ and $\overline G=\overline{\Gso}$. Consequently, $ q\in\{79,239\}.$ A direct verification in Magma~\cite{Magma} excludes both primes in types \textup{(i)--(iii)} and \textup{(v)}. This completes the proof.
	\end{proof}

\subsection{The external-line case}\label{sec:external-A5-case}
Suppose that $V=U\perp W$, where $U=\F_{q^2}$ and $W=\F_q^3$. Equip $V$ with \[
 Q(u,\bfw)=u^{q+1}+Q_{A_5}(\bfw),
\]
where $Q_{A_5}$ is the invariant quadratic form in Subsection
\ref{sec:2_A5} (3). Choose $\delta\in\F_{q^2}$ with $\delta^2=\nu$.
In the basis $1,\delta$ of $U$, the norm form has Gram determinant
$-4\nu$, while the form on $W$ has Gram determinant $-4b_5$.
Hence, we deduce that 
\[
 \operatorname{disc}(Q)=16b_5\nu\in b_5\blacksquare_q,
 \qquad
 -\frac{\operatorname{disc}(Q)}2\blacksquare_q
 =-2b_5\square_q.
\]
The line $\PG(U)$ is external to $\cQ(4,q)$. By Theorem
\ref{thm_SO_5(q)}, we have 
\[
 K:=\Stab_{\SO_5(q)}(\PG(U))\cong(C_{q+1}\times\SO_3(q)).\la \tau\ra,
\]
where $\tau$ is an involution defined by $\tau(u,\bfw)=(u^q,-\bfw)$. The quotient of this stabilizer by the
$\Omega_3(q)$-factor is solvable. Since $H\cong A_5$ is perfect, it
follows that $H\leq\Omega_3(q)$. Thus $H$ fixes $U$ pointwise and acts
irreducibly on $W$. Moreover, $U=C_V(H)$; since $H\unlhd G$, both $U$
and $W=U^\perp$ are $G$-invariant.

For $y\in\big\la\xi \big\ra$, put $c_y(u,\bfw)=(yu,\bfw)$. Set
$T:=\la c_\xi\ra$, and choose $k\mid (q+1)$ so that $
 G\cap T=\la c_{\xi^k}\ra.$
 Since $G$ preserves $U$ and $W$, it normalizes  $T=\SO(U)\cong \SO_2^-(q)$, which acts trivially on $W$. Also $H\unlhd G$. Hence $J:=( G\cap T)H$ is normal in $G$ and
\[
 |\overline J|=\frac{60(q+1)}{k}.
\]
 Since $H=\Gso^{(\infty)}$ is characteristic in
$\Gso$, we have $\Gso\leq N_{\SO_5(q)}(H)$. The structure of $K$ and
$N_{\SO_3(q)}(H)=H$ give $
 N_{\SO_5(q)}(H)=(T\times H).\la\tau\ra.$ Because $H\leq G$, we have
\[
 \Gso\cap(T\times H)=( G\cap T)H=J,
\]
and therefore $[\Gso:J]\mid2$. The projective map is injective on
$\SO_5(q)$, so \eqref{eqn_orthogonal_core} now gives
\[
 [\overline G:\overline J]
 = [\overline G:\overline{\Gso}][\Gso:J]\mid2e.
\]
The transitive action of $\overline{G}$ on $\cB$ implies that $(q+1)\mid |\overline G|$; together with the order of
$\overline J$, it further gives
\begin{equation}\label{eqn_caseI_k_divides}
 k\mid60[\overline G:\overline J]\mid120e.
\end{equation}

Suppose that $P=\la\bfv\ra=\la(u,\bfw)\ra\in\cB$. By \eqref{eqn_A5_avoids_invariant_subspaces}, both $u$ and
$\bfw$ are nonzero, and $Q_{A_5}(\bfw)=-u^{q+1}\ne0$. Hence $\Stab_H(\bfw)$ embeds in the cyclic
group $\SO(\bfw^\perp)$, and
\begin{equation}\label{eqn_StabH}
 |\Stab_H(\bfw)|\in\{1,2,3,5\}.
\end{equation}
If nontrivial, this stabilizer has fixed space $\F_q\bfw$ on $W$.
Since $H$ fixes $U$ pointwise,
\begin{equation}\label{eqn_external_stabilizers}
 \Stab_H(P)=\Stab_H(\bfv)=\Stab_H(\bfw).
\end{equation}

\begin{lemma}\label{lem_A5}
Let $P=\la\bfv\ra\in\cB$, with $\bfv=(u,\bfw)$ as above. Under the full-span hypothesis \eqref{eqn_A5_full_span}, if $(q+1)/k>2$, then
$60\mid k$, $u^{q+1}\in-2b_5\blacksquare_q$, and
$\Stab_J(P)=1$. Moreover, for every $g\in J\setminus\{1\}$,
$B(\bfv,g\bfv)\in-2b_5\square_q$.
\end{lemma}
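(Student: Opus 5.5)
The plan is to apply Theorem \ref{thm_square_class} to the normal subgroup $J=(G\cap T)H$, and to combine it with the orbit-length divisibility of Lemma \ref{Nelson-lemma}. Write $Y:=\la\xi^k\ra$ and $\alpha:=u^{q+1}\in\F_q^\times$. For $g=c_yh\in J$, with $y\in Y$ and $h\in H$, we have
\[
B(\bfv,g\bfv)=\alpha(y+y^{-1})+B_{A_5}(\bfw,h\bfw).
\]
This follows because $c_y$ acts only on $U$, $H$ acts only on $W$, and the polarization of the norm gives $B_U(u,yu)=u^{q+1}(y+y^q)$ with $y^q=y^{-1}$.

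The first step is to control stabilizers. Suppose $c_yh$ fixes the projective point $P$. Then $yu=\lambda u$ and $h\bfw=\lambda\bfw$ for some $\lambda\in\F_q^\times$. Since $y\in\la\xi\ra\cap\F_q=\{\pm1\}$, either $y=1$ and $h\in\Stab_H(\bfw)$, or $y=-1$ and $h\bfw=-\bfw$. In particular $|\Stab_J(P)|$ divides $2|\Stab_H(\bfw)|$, which by \eqref{eqn_StabH} is at most $10$.

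Because $J\unlhd G$, Lemma \ref{Nelson-lemma} gives $|J|/|\Stab_J(P)|\mid q+1$. Here $|J|=60(q+1)/k$ after passing to $\overline J$, using \eqref{eqn_external_stabilizers}. Hence $60\mid k\,|\Stab_J(P)|$. So $60\mid k$ is equivalent to $\Stab_J(P)=1$, and the substance of the lemma is to kill the stabilizer. I would also check that the elements of the form $c_{-1}h$ with $h\bfw=-\bfw$ do not break the hypothesis $\Stab(\la\bfv\ra)=\Stab(\bfv)$ of Theorem \ref{thm_square_class}. If necessary, I would apply that theorem first to the subgroup $Y'H$, where $Y'$ is the odd-order part of $Y$ (or $Y$ itself when $-1\notin Y$). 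On this subgroup the stabilizer condition holds by \eqref{eqn_external_stabilizers}, and only $y=1$ can occur in a stabilizing element.

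Next, on a subgroup $C\times H\le J$ satisfying the hypothesis, the map $\epsilon_{P,J}$ is a homomorphism to $\{\pm1\}$ that is trivial on the perfect group $H$. Its restriction to the cyclic group $C=\la c_{\xi^k}\ra\cap(\cdot)$ is either trivial or the unique character of order $2$. If it is trivial, then $\chi(B(\bfv,g\bfv))=\chi(-2b_5)$ for every $g$ moving $P$, which is exactly the final claim $B(\bfv,g\bfv)\in-2b_5\square_q$. To exclude the order-$2$ case, I would fix $h\in H$ and sum $\chi\bigl(\beta+\alpha y+\alpha y^{-1}\bigr)$ over $y\in Y$, with $\beta=B_{A_5}(\bfw,h\bfw)$. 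Lemma \ref{lem_caseI_torus_sum} bounds this by $2\sqrt q$ whenever $\beta^2\ne4\alpha^2$. I would then use Lemma \ref{lem_xisquare}, noting that $y+y^{-1}-2$ is a square or a nonsquare according to the parity of the exponent, to relate the order-$2$ character to $\chi$ evaluated on these values. Choosing $h$ suitably among the $60$ elements, so that $\beta^2\ne4\alpha^2$, should force an incompatibility. The hypothesis $(q+1)/k>2$ guarantees that $C$ has an element other than $1$ and $-1$, so that $\epsilon$ is evaluated at genuinely non-fixing elements.

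Once $\epsilon$ is known to be trivial, I would derive the statement about $u^{q+1}$ by choosing a particular $g$. The natural choice is $g=h$, an involution of $H$. Writing $h\bfw=-\bfw+2\bfw_0$ with $\bfw_0$ on the fixed line of $h$, one gets $B(\bfv,h\bfv)=2\alpha+B_{A_5}(\bfw,h\bfw)$. Combining this with the companion element $c_yh$, and with the relation $Q_{A_5}(\bfw)=-\alpha$, should isolate $\chi(\alpha)=\chi(-2b_5\nu)$. The same square-class constancy then excludes any nontrivial stabilizer of order $2,3,5$, which gives $\Stab_J(P)=1$ and hence $60\mid k$.

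The main obstacle is the step excluding the order-$2$ restriction of $\epsilon$ to $C$. It requires a careful choice of $h$ to avoid the degenerate case $\beta^2=4\alpha^{q+1}$ in Lemma \ref{lem_caseI_torus_sum}, and it may need $q$ large relative to $|Y|$, leaving finitely many small $q$ to be handled by direct computation.
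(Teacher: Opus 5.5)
Your proposal has a genuine gap at exactly the step you call the main obstacle, and the remedy you suggest would not close it. Suppose the restriction of $\epsilon$ to the torus part $C$ were the order-$2$ character $\psi$. Then for fixed $h$ the values $\chi\bigl(\beta+u^{q+1}(y+y^{-1})\bigr)$ would be $\pm\psi(y)$, and their sum over $Y$ is essentially zero. An upper bound like Lemma~\ref{lem_caseI_torus_sum} is fully consistent with this, so no contradiction arises. That lemma also assumes $k$ even, whereas the order-$2$ case is precisely $k$ odd (see below). A twisted sum might work for large $q$, but you would be left with unresolved small $q$, which does not prove the lemma as stated. Likewise, square-class constancy cannot exclude a nontrivial $\Stab_H(\bfw)$, since it says nothing about elements that fix $P$. (Also, $60\mid k$ is implied by $\Stab_J(P)=1$, not equivalent to it.)

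The missing idea is that the homomorphism property on the cyclic group $\la c_{\xi^k}\ra$ alone already determines $\chi(u^{q+1})$, with no character sums; there the stabilizer hypothesis holds trivially. Since $(q+1)/k>2$, the points $P$, $c_{\xi^k}P$, $c_{\xi^{2k}}P$ are distinct; this is the real use of the hypothesis. Two of the three pairwise values of $B$ equal $(\xi^k+\xi^{-k}-2)u^{q+1}$. The third is $(\xi^{2k}+\xi^{-2k}-2)u^{q+1}$, a nonsquare multiple of $u^{q+1}$ by Lemma~\ref{lem_xisquare}. Lemma~\ref{lem_disc} then forces $u^{q+1}\in-2b_5\blacksquare_q$.

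Now apply Theorem~\ref{thm_square_class} only to $H$. An $h\in H$ with $h\bfw=-\bfw$ would give $B(\bfv,h\bfv)=4u^{q+1}\in-2b_5\blacksquare_q$, which is the wrong class. This removes the elements $c_{-1}h$ from the stabilizer, so $\Stab_J(P)=\Stab_H(\bfw)$. Orbit counting then gives $60/|\Stab_H(\bfw)|\mid k$, so $k$ is even. By Lemma~\ref{lem_xisquare}, that is exactly the triviality of $\epsilon$ on $C$. The triple condition then propagates the class $-2b_5\square_q$ to every non-fixing $c_yh$.

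Finally, you need a structural argument to kill $\Stab_H(\bfw)$. Its normalizer in $A_5$ ($V_4$, $S_3$ or $D_{10}$) contains an involution outside it. That involution preserves $C_W(\Stab_H(\bfw))=\F_q\bfw$, hence negates $\bfw$, which was already excluded. Thus $\Stab_J(P)=1$ and $60\mid k$.
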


\begin{proof}
 Since $H$ is perfect and $\Stab_H(P)=\Stab_H(\bfv)$ by \eqref{eqn_external_stabilizers}, Theorem \ref{thm_square_class} gives that 
\begin{equation}\label{eqn_external_H_class}
 B(\bfv,h\bfv)\in-2b_5\square_q
 \qquad\text{whenever }h\in H\text{ and }hP\ne P
\end{equation}
in the present model. 

Since both components $u,\bfw$ of $\bfv$ are nonzero, $c_yP=P$ holds only for
$y=1$. Thus $P,c_{\xi^k}P,c_{\xi^{2k}}P$ are distinct, and
\begin{align*}
 B(\bfv,c_{\xi^k}\bfv)
 &=B(c_{\xi^k}\bfv,c_{\xi^{2k}}\bfv)
   =(\xi^k+\xi^{-k}-2)u^{q+1},\\
 B(\bfv,c_{\xi^{2k}}\bfv)
 &=(\xi^{2k}+\xi^{-2k}-2)u^{q+1}.
\end{align*}
Since $(q+1)/k>2$, we have $2k\leq q$. By Lemma
\ref{lem_xisquare}, applied with exponent $2k$, $\xi^{2k}+\xi^{-2k}-2$ is a nonsquare.
By applying Lemma \ref{lem_disc} to these three distinct points, we deduce 
\begin{equation}\label{eqn_caseI_u_class}
 u^{q+1}\in-2b_5\blacksquare_q.
\end{equation}
No $h\in H$ satisfies $h\bfw=-\bfw$. Indeed, if such an $h$ exists, then $hP\ne P$ and
\[
B(\bfv,h\bfv)=4u^{q+1}\in-2b_5\blacksquare_q,
\]
contradicting \eqref{eqn_external_H_class}.

Assume that $k$ is odd. Then $c_yhP=P$ forces
$y^2=1$, and the case $y=-1$ has just been excluded. Thus
$\Stab_J(P)=\Stab_H(\bfw)$. The normality of $\overline J$ in $\overline G$ implies that each $\overline J$-orbit on $\cB$ has
length
\[
 \frac{60(q+1)}{k|\Stab_H(\bfw)|},
\]
dividing $q+1$. Hence $60/|\Stab_H(\bfw)|$ divides $k$, which is impossible for odd
$k$ because $|\Stab_H(\bfw)|\in\{1,2,3,5\}$ in \eqref{eqn_StabH}. Therefore $k$ is even.

For every $y\in\la\xi^k\ra\setminus\{1\}$, we obtain from Lemma \ref{lem_xisquare} and
\eqref{eqn_caseI_u_class} that
\[
 B(\bfv,c_y\bfv)=(y+y^{-1}-2)u^{q+1}
 \in-2b_5\square_q.
\]
The same square class holds for $c_yh$ whenever $c_yhP\ne P$. If
$hP=P$, then $h\bfw=\bfw$ and
$B(\bfv,c_yh\bfv)=B(\bfv,c_y\bfv)$; if $y=1$, it follows from
\eqref{eqn_external_H_class}. Otherwise the points
$P,c_yP,c_yhP$ are distinct, so Lemma \ref{lem_disc} applies. Hence
\begin{align}
B(\bfv,c_yh\bfv)
&=(y+y^{-1})u^{q+1}+B_{A_5}(\bfw,h\bfw)\notag\\
&=(y+2+y^{-1})u^{q+1}+Q_{A_5}(\bfw+h\bfw)
  \in-2b_5\square_q.\label{eqn_cond_A5}
\end{align}

Suppose that $\Stab_H(\bfw)$ is nontrivial. For stabilizers of orders
$2,3,$ and $5$, the corresponding normalizers in $A_5$ are isomorphic
to $V_4,S_3,$ and $D_{10}$, respectively. Hence some involution
$h\notin\Stab_H(\bfw)$ normalizes $\Stab_H(\bfw)$. It preserves
$C_W(\Stab_H(\bfw))=\F_q\bfw$, and therefore $h\bfw=-\bfw$. However, we have already shown that no such $h$ exists. Thus $\Stab_H(\bfw)=1$.

If $c_yhP=P$, then $y=\pm1$. Since there is no $h\in H$ with $h\bfw=-\bfw$, we have $y=1$. Thus
$\Stab_J(P)=1$, and every nonidentity element of $J$ has the asserted square class. Finally,
\[
 \frac{60(q+1)}{k}=|P^{\overline J}|\mid (q+1),
\]
so $60\mid k$.
\end{proof}

\begin{lemma}\label{lem_caseI_small_torus}
Assume that $q\geq71$ and $(q+1)/k\leq2$. Then
$(q,k)=(239,120)$.
\end{lemma}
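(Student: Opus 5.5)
The plan is to combine the divisibility \eqref{eqn_caseI_k_divides}, namely $k\mid120e$, with the hypothesis $(q+1)/k\le2$. Since $k\mid(q+1)$, the hypothesis means $q+1\in\{k,2k\}$, so
\[
q+1\mid 2k\mid 240e .
\]
First I would show that $e=1$, using the primitive-divisor argument already used in Lemma \ref{lem_A5_geometric_reduction}. Then I would list the few primes allowed by $q+1\mid240$ and $q\ge71$, and eliminate all but $q=239$.

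\emph{Step 1: $e=1$.} Suppose $e>1$. By Lemma \ref{lem_Zsigmondyprime} there is a primitive prime divisor $r$ of $p^{2e}-1$. Such an $r$ divides $q+1$, hence divides $240e$. Since $2e\mid r-1$, $r$ is coprime to $e$, so $r\mid240$, giving $r\in\{3,5\}$. The condition $r\ge 2e+1\ge5$ then forces $(r,e)=(5,2)$. So $q=p^2$, and $p^2+1\mid480$. As $p^2+1\equiv2\pmod 4$, the odd number $(p^2+1)/2$ divides $15$. Hence $p^2+1\in\{2,6,10,30\}$, so $p=3$ and $q=9<71$, a contradiction. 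Therefore $e=1$, $q=p$, and $q+1\mid240$ while $k\mid120$.

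\emph{Step 2: the finite list.} The primes $q\ge71$ with $q+1\mid240$ are $q=79$ and $q=239$. (The $A_5$-condition $q\equiv\pm1\pmod{10}$ holds for both.) For $q=239$ we have $q+1=240$, so $k\in\{120,240\}$. Since $k\mid120$, this forces $k=120$, which is the asserted pair. For $q=79$ we have $q+1=80$, so $k\in\{40,80\}$. Since $k\mid120$, this forces $k=40$, so $G\cap T=\la c_{\xi^{40}}\ra=\la c_{-1}\ra$ has order $2$.

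\emph{Step 3: exclude $(q,k)=(79,40)$; this is the main obstacle.} Here $|\overline J|=120$, and $\overline J$-orbits on $\cB$ have length dividing $\gcd(120,80)=40$. So $3$ divides $|\Stab_J(P)|$. An element of order $3$ in $J=\la c_{-1}\ra\times H$ lies in $H$, so $\Stab_H(\bfw)$ contains an element of order $3$. As in the proof of Lemma \ref{lem_A5}, an involution $h\in H$ normalizing this stabilizer satisfies $h\bfw=-\bfw$. By \eqref{eqn_external_H_class}, $4u^{q+1}\in-2b_5\square_q$.

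I would then try to derive a contradiction from Lemma \ref{lem_disc}, applied to triples such as $P,\ c_{-1}P,\ gP$ with $g\in H$, using $B(\bfv,c_{-1}\bfv)=-4u^{q+1}$ together with the square classes forced on $B(\bfv,g\bfv)$ and $B(c_{-1}\bfv,g\bfv)$. If this bookkeeping does not close, I would instead fall back on an exhaustive exact computation over $\F_{79}$ (as in Lemma \ref{lem_A5_geometric_reduction}). That computation would enumerate the $\la c_{-1}\ra\times H$-invariant candidate sets of $80$ points of the form $\la(u,\bfw)\ra$ and test the criterion of Lemma \ref{lem_disc}, showing none is a BLT-set.
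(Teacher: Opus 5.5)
Your Steps 1 and 2 are correct. Deducing $q+1\mid 240e$ from $k\mid120e$ and then applying Zsigmondy is a clean alternative to the paper's route to $e=1$, which first shows $e$ odd from the possible lengths of $\bfw^H$ and then uses the size bound $q+1\le 240e$.

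\textbf{The gap is Step 3:} you never actually exclude $(q,k)=(79,40)$.

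\emph{The square-class route.} The bookkeeping you propose has no reason to close. The involution $h$ you found satisfies $c_{-1}\bfv=(-u,\bfw)=-(u,-\bfw)=-h\bfv$, so $c_{-1}P=hP$. Because $c_{-1}$ commutes with $H$, every triple $P,\,c_{-1}P,\,gP$ then lies in the single orbit $P^H$. Applying Lemma~\ref{lem_disc} to such triples only restates that $P^H$ is a partial BLT-set, which is not where the contradiction comes from.

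\emph{The computational fallback.} It is only sketched. As described, it searches for $80$-point unions of $J$-orbits but does not build in the transitivity constraint $[\overline G:\overline J]\mid 2$.

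\textbf{The missing observation:} $c_{-1}h\in J$ sends $\bfv$ to $-\bfv$, so it fixes $P$. Hence $\Stab_J(P)\supseteq\la\Stab_H(\bfw),c_{-1}h\ra$, which has order $6$ because $h$ normalizes $\Stab_H(\bfw)\cong C_3$ and $c_{-1}$ is central. Therefore $|P^{\overline J}|\le 120/6=20$; in fact $P^{J}=P^{H}$. Since $e=1$ gives $[\overline G:\overline J]\mid 2$, transitivity yields
\[
|\cB|=|P^{\overline G}|\le 2\cdot 20=40<80,
\]
a contradiction. This is exactly how the paper finishes. With this observation added, your argument is complete.
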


\begin{proof}
First, \eqref{eqn_caseI_k_divides} and $(q+1)/k\leq2$ imply $q+1\leq240e$. For $P=\la(u,\bfw)\ra\in\cB$,
we deduce from \eqref{eqn_external_stabilizers} that $|P^H|=|\bfw^H|$. By the normality of $\overline H$ in $\overline G$ and \eqref{eqn_StabH}, we have
\[
 |\bfw^H|\in\{60,30,20,12\},\qquad |\bfw^H|\mid (q+1).
\]
If the orbit length is not $30$, then $4\mid (q+1)$; if it is $30$, then
$3\mid (q+1)$. In either case $e$ is odd.

Suppose that $e>1$. The inequality $p^e+1\leq240e$ excludes $e\geq7$.
For $e=3$ or $5$, using $q\geq71$ and $p\ne5$ leaves only
$(p,e)=(7,3)$ and $(3,5)$. But neither $344$ nor $244$ is divisible by
any of $12,20,30,60$. Thus $e=1$.

With $e=1$, we have $k\mid120$. Together with $q\geq71$,
$q\equiv\pm1\pmod{10}$, $(q+1)/k\leq2$, and the possible orbit lengths
above, this leaves
\[
 (q,k)=(79,40)\quad\text{or}\quad(239,120).
\]
If $(q,k)=(79,40)$, then $|\bfw^H|=20$ and
$|\Stab_H(\bfw)|=3$. The normalizer of this subgroup in $H$ contains an
involution $h\notin\Stab_H(\bfw)$, for which $h\bfw=-\bfw$. Since
$c_{-1}h\bfv=-\bfv$, the subgroup
$\la\Stab_H(\bfw),c_{-1}h\ra$ of order six fixes $P$. Therefore
$|P^{\overline J}|\leq20$, and
\[
 |\cB|=|P^{\overline G}|
 \leq[\overline G:\overline J]|P^{\overline J}|
 \leq40,
\]
a contradiction. Hence only $(q,k)=(239,120)$ remains.
\end{proof}

Assume $(q+1)/k>2$. Lemma \ref{lem_A5} shows that
$-u^{q+1}/(2b_5)$ and $\nu$ are nonsquares. After rescaling
$\bfv$, assume that
$-u^{q+1}/(2b_5)=\nu$. Since $W$ is irreducible and the involutions generate $H$, there is an involution $h\in H$ such that $h\bfw\notin\F_q\bfw$. Set
$a=-Q_{A_5}(\bfw+h\bfw)/(2b_5)$. By
Lemma \ref{lem_A5}, we have $c_yhP\ne P$ for every
$y\in\la\xi^k\ra$; hence we obtain from \eqref{eqn_cond_A5} that
\begin{equation}\label{eqn_caseI_square_condition}
\nu(y+2+y^{-1})+a\in\square_q
\qquad\text{for every }y\in\la\xi^k\ra.
\end{equation}
The $(+1)$-eigenspace of $h$ is a non-degenerate $1$-space over $\F_q$. The projection of $\bfw$ onto it is nonzero;  otherwise $h\bfw=-\bfw$, which is a contradiction to Lemma \ref{lem_A5}. Thus $a\ne0$. Taking $y=1$ above also gives
$a\ne-4\nu$.

Define the auxiliary set
\begin{equation}\label{eqn_caseI_admissible_scalars}
\mathcal A_{q,k}:=
\{a\in\F_q:a+\nu(y+2+y^{-1})\in\square_q
\text{ for every }y\in\la\xi^k\ra\}\setminus\{0,-4\nu\}.
\end{equation}
Thus $\mathcal A_{q,k}\ne\varnothing$ is necessary in the external-line
case with $(q+1)/k>2$.

\begin{corollary}\label{cor_caseI_prime_branches}
In the external-line case, $q$ is prime and $q<(2k-1)^2$. If
$(q+1)/k>2$, then $q\equiv-1\pmod{60}$; moreover, if $m$ is the number
of $\overline J$-orbits on $\cB$, then
$m\in\{1,2\}$, $k=60m$, and $q<(120m-1)^2$.
\end{corollary}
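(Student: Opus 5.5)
We split according to whether $(q+1)/k\leq2$. If $(q+1)/k\leq2$, Lemma~\ref{lem_caseI_small_torus} gives $(q,k)=(239,120)$. Then $q=239$ is prime and $(2k-1)^2=239^2>q$, so every assertion holds; the last sentence of the corollary does not apply because $(q+1)/k=2$.

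Now assume $(q+1)/k>2$. By Lemma~\ref{lem_A5}, $60\mid k$ and $\Stab_J(P)=1$, so every $\overline J$-orbit on $\cB$ has length $|\overline J|=60(q+1)/k$. Since $\overline J\unlhd\overline G$ and $\overline G$ is transitive, the number of $\overline J$-orbits is
\[
m=\frac{q+1}{60(q+1)/k}=\frac{k}{60},
\]
so $k=60m$. From \eqref{eqn_caseI_k_divides} we have $k\mid 60[\overline G:\overline J]$, hence $m\mid[\overline G:\overline J]$, and $[\overline G:\overline J]\mid 2e$. Because $k\mid q+1$ and $60\mid k$, we get $q\equiv-1\pmod{60}$; in particular $4\mid q+1$, which forces $e$ odd. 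The non-emptiness of $\mathcal A_{q,k}$ is necessary in this case.

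To bound $q$, fix $a\in\mathcal A_{q,k}$ and apply Lemma~\ref{lem_caseI_torus_sum} with $\beta=2\nu+a$ and $\alpha=\nu$, so that $\alpha^q=\nu$. The discriminant condition reads $(2\nu+a)^2-4\nu^2=a(a+4\nu)\ne0$, which holds because $a\notin\{0,-4\nu\}$. The parameter $k$ is even, as required. The summand is $\chi(a+\nu(y+2+y^{-1}))$, which equals $1$ for all $y\in Y$ by the defining property of $\mathcal A_{q,k}$. So the sum equals $|Y|=(q+1)/k$, and the lemma yields
\[
\frac{q+1}{k}\leq\frac{(2k-1)\sqrt q+1}{k},
\]
that is, $q\leq(2k-1)\sqrt q$. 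This gives $\sqrt q\leq 2k-1$, and strictness follows because $q=(2k-1)^2$ is impossible when $q$ is odd prime power with $k\mid q+1$ (or directly from the strict slack in the lemma). With $k=60m$ this is $q<(120m-1)^2$.

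It remains to show $q$ is prime, i.e.\ $e=1$, and $m\in\{1,2\}$. Suppose $e>1$. We know $e$ is odd, and $k\mid 60\cdot 2e=120e$. Combining this with $q<(2k-1)^2\leq(240e-1)^2$ gives $p^e<(240e)^2$, which leaves only finitely many $(p,e)$ with $e\in\{3,5,7,\dots\}$ small. We dispose of these using $q\equiv-1\pmod{60}$, which requires $3\mid p^e+1$ and $5\mid p^e+1$; for odd $e$ this means $p\equiv-1\pmod{3}$ and $p\equiv-1\pmod 5$. We also use the Zsigmondy argument: a primitive prime divisor $r$ of $p^{2e}-1$ divides $(q+1)/\gcd(e,q+1)$ and hence divides $|\overline J|$, which forces $r\mid 60(q+1)/k$. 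We expect the contradiction for $e>1$ to come mainly from the size bound: for $e\geq3$, $p^e\equiv-1\pmod{60}$ with $p^e<(240e)^2$ should leave nothing, since, for example, $p=29,59$ with $e=3$ give $p^3>720^2$. This finite check is the main obstacle. Once $e=1$, we have $m\mid[\overline G:\overline J]\mid 2$, so $m\in\{1,2\}$, and the case $(q+1)/k\leq2$ already gave $q=239$ prime. The finite elimination for $e>1$ is where care is needed; if the crude bound does not close it, we would fall back on the primitive-divisor constraint.
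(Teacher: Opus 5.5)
You have the same overall structure as the paper: the small-torus case via Lemma~\ref{lem_caseI_small_torus}, the Weil bound via Lemma~\ref{lem_caseI_torus_sum} with $\alpha=\nu$ and $\beta=a+2\nu$, $k=60m$ from semiregularity, and $m\mid[\overline G:\overline J]$. However, your elimination of $e>1$ has a genuine gap. Your congruences modulo $3$, $5$ and $4$ give $p\equiv-1\pmod{60}$. The crude bound $p^e<(240e)^2$ does rule out $e\geq5$. For $e=3$, though, it only gives $p^3<720^2=518400$, i.e.\ $p\leq 80$, and this leaves $p=59$. Your claim that $59^3>720^2$ is false: $59^3=205379<518400$. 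Your fallback does not close this case either. The primitive prime divisors of $59^6-1$ are $7$ and $163$, coming from $59^2-59+1=3\cdot7\cdot163$. The requirement that they divide $|\overline J|=60(q+1)/k$ holds automatically, since neither divides $k$. So as written, $q=59^3$ is not excluded.

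The missing step is to use both divisibilities on $k$ together: $k\mid q+1$ and $k\mid 120e$, so $k\mid\gcd(q+1,120e)$. For $(p,e)=(59,3)$ this gives $k\leq\gcd(59^3+1,360)=180$. Hence $(2k-1)^2\leq 359^2=128881<59^3$, contradicting $q\leq(2k-1)^2$; this is exactly how the paper finishes.

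A smaller point concerns strictness. Your general claim that $q=(2k-1)^2$ is impossible for an odd prime power with $k\mid q+1$ fails for $q=9$, $k=2$. Also, the first inequality of Lemma~\ref{lem_caseI_torus_sum} has no built-in strict slack. In context, argue instead that $q=(2k-1)^2$ gives $q+1\equiv2\pmod k$, which is impossible since $60\mid k$. Alternatively, note that $e$ is odd, so $q$ is not a square.
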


\begin{proof}
If $(q+1)/k\leq2$, Lemma \ref{lem_caseI_small_torus} gives
$(q,k)=(239,120)$, and the first assertion follows. Suppose that
$(q+1)/k>2$. Apply Lemma \ref{lem_caseI_torus_sum} with
$\alpha=\nu$ and $\beta=a+2\nu$. Since
$\beta^2-4\alpha\alpha^q=a(a+4\nu)\ne0$, and every term in
\eqref{eqn_caseI_square_condition} is a nonzero square, we obtain
\[
 \frac{q+1}{k}\leq\frac{(2k-1)\sqrt q+1}{k}.
\]
Thus $q\leq(2k-1)^2$.

By Lemma \ref{lem_A5}, we have $60\mid k$, and then
$q\equiv-1\pmod{60}$. Hence $e$ must be odd. The unit group modulo $60$ has exponent $4$, so $p^4\equiv1\pmod{60}$. Since $p^e\equiv-1\pmod{60}$ and $e$ is odd, we have $e\equiv 1$ or $3\pmod{4}$. In both cases, $p\equiv -1\pmod{60}$. Thus $p\geq59$. Combining $k\leq120e$ with the
preceding bound gives
\[
 p^e\leq(240e-1)^2.
\]
This is impossible for $e\geq5$. If $e=3$, the same inequality and
$p\equiv-1\pmod{60}$ force $p=59$. But then
\[
 k\leq\gcd(59^3+1,360)=180,
\]
whereas $59^3>(2\cdot180-1)^2$, again a contradiction. Hence $e=1$
and $q$ is prime. The equality $q=(2k-1)^2$ is now impossible, so
$q<(2k-1)^2$.

Let $m$ be the number of $\overline J$-orbits on $\cB$. Each orbit has
length $60(q+1)/k$, and hence $k=60m$. Since
$\overline J\unlhd\overline G$, the quotient
$\overline G/\overline J$ is transitive on these $m$ orbits. Therefore
$m\mid[\overline G:\overline J]\mid2$. Substituting $k=60m$ gives the
remaining assertions.
\end{proof}

\subsection{The hyperbolic case}\label{sect:hyperbolic}
Let $M_2(q)$ denote the set of all $2\times 2$ matrices over $\F_q$. Let $V=\F_q\oplus\mathcal M$, where 
\[
\mathcal M= M_2(q) \text{ or }\left\{
\begin{pmatrix}
\alpha&\beta\\
-\beta^q&\alpha^q
\end{pmatrix} :\alpha,\beta\in \F_{q^2}
\right\}.\]   Then $\dim_{\F_q}\mathcal M=4$, and the quadratic form
\[
 Q(x,M)=x^2-\det(M)
\]
defines $\cQ(4,q)$ with $\operatorname{disc}(Q)\in2\square_q$. In this
model, $\Omega_4^+(q)\cong \SL_2(q)\circ \SL_2(q) $ (or $\SU_2(q)\circ\SU_2(q)$)\footnote{The choice of $\mathcal M$ depends on the explicit representation of $2.A_5$ used below; the relevant representations are given in Subsection \ref{sec:2_A5}.} acts by
\[
 (g_1\circ g_2)(x,M)=(x,g_1Mg_2^{-1}).
\]
Here $g_1,g_2\in\SL_2(q)$ in the first model, and
$g_1,g_2\in\SU_2(q)$ in the second.
See \cite{KL129}*{Section~2.10}. Both summands are $G$-invariant. By \eqref{eqn_A5_avoids_invariant_subspaces}, every point of $\cB$
has a unique representative
\begin{equation}\label{eqn_hyperbolic_normalized_point}
 \bfv_M=(1,M),\qquad M\in \mathcal{M}.
\end{equation}

Since $H$ is perfect, $H\leq\Omega_4^+(q)$. By Lemma \ref{lem_subPSL} and Lemma
\ref{lem_unique_nonabelian_factor}, every nontrivial projection of $H$ to a projective factor is $A_5$. By Goursat's lemma \cite[Proposition 1.6]{Serre}, there are three possibilities: a graph subgroup, the central product of the two factors isomorphic to $2.A_5$, or a subgroup of one factor.

 For the graph-subgroup case, use the model $\mathcal{M}=M_2(q)$. Write $H=\{ a\circ\sigma(a):\, a\in L_1\}$, where $L_1,L_2 \leq\SL_2(q)$ are isomorphic to $2.A_5$ and $\sigma:\,L_1\rightarrow L_2$ is an isomorphism. Suppose that $\sigma(a)=M^{-1}aM$ for some $M\in \GL_2(q)$ and each $a\in L_1$. Then $C_V(H)=\F_q\oplus \F_q M$; see the proof
of Proposition \ref{prop_case6}. If $\det(M)\in\square_q$, then $C_V(H)$ is hyperbolic, so $G$ stabilizes a pair of non-collinear singular points. Lemma \ref{lem_A5_geometric_reduction}, including its exclusion of the remaining primes $79$ and $239$, rules out this case.
If $\det(M)\in\blacksquare_q$, then $C_V(H)$ is anisotropic, and $\PG(C_V(H))$ is a $G$-invariant external line.
This reduces to the external-line case as discussed in Subsection \ref{sec:external-A5-case}.
Thus, apart from the external-line case, the following three possibilities remain, up to conjugacy and factor interchange: 
\begin{enumerate}
\item[(I)]  $H=\{a\circ\sigma(a):a\in L_1\}\cong A_5$, where $L_1,L_2\cong2.A_5$ and the isomorphism $\sigma: L_1\rightarrow L_2$  is not induced by conjugation by any matrix in $\GL_2(q)$;
\item[(II)] $H=L_1\circ L_2$ with
$L_1,L_2\cong2.A_5$;
\item[(III)] $H=I_2\circ L$ with
$L\cong2.A_5$.
\end{enumerate}

The following lemma applies to Cases \textup{(II)} and \textup{(III)}.

\begin{lemma}\label{lem_hyperbolic_factor_orbit}
Suppose that a subgroup $H_0=L\circ I_2$ or $I_2\circ L$, with
$L\cong2.A_5$, is contained in $H$. Then $H_0$ acts semiregularly on $\cB$, and each $H_0$-orbit is a partial BLT-set of size $120$. In particular, $q\equiv-1\pmod{120}$. 
With the representatives in \eqref{eqn_hyperbolic_normalized_point},
the value $B(\bfv_M,\bfv_N)$ is a nonzero square whenever $\la\bfv_M\ra$ and $\la\bfv_N\ra$ are distinct points in  the same $H_0$-orbit.
\end{lemma}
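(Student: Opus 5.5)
We work in the hyperbolic model $V=\F_q\oplus\mathcal M$ with the normalized representatives $\bfv_M=(1,M)$ from \eqref{eqn_hyperbolic_normalized_point}, and treat $H_0=I_2\circ L$; the case $L\circ I_2$ is symmetric. An element $g=I_2\circ a$ sends $\bfv_M$ to $(1,Ma^{-1})$. Since the first coordinate is preserved, $g$ fixes $\la\bfv_M\ra$ if and only if $Ma^{-1}=M$. The matrix $M$ is invertible because $\det M=1$ from $Q(\bfv_M)=0$. Hence $a=I_2$, and $g$ is trivial in the central product. So $H_0$ acts semiregularly on $\cB$, and each $H_0$-orbit has length $|H_0|=|2.A_5|/|\la -I_2\ra|\cdot 2$. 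More precisely, $H_0\cong 2.A_5$ acts faithfully, since $-I_2$ acts as $M\mapsto -M$ on points $(1,-M)\neq(1,M)$, so the orbit length is $120$. A subset of a BLT-set is a partial BLT-set.

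Next, $H_0$ is normal in $G$. In Case (III), $H_0=H$. In Case (II), $H_0$ is one of the two central factors of $H=L_1\circ L_2$. If $G$ swaps the factors, we replace $H_0$ by $H$ and argue with a subgroup that does not change. Since this is delicate, it is cleaner to apply Lemma \ref{Nelson-lemma} to $H$ itself: semiregularity of $H$ holds in the same way, because $(g_1\circ g_2)$ fixing $(1,M)$ means $g_1Mg_2^{-1}=M$. That is not automatically trivial, though. So instead use $H_0\unlhd H\unlhd G$ together with the fact that all $H_0$-orbits are regular of length $120$. Then $\cB$ is partitioned into $H_0$-orbits of length $120$, and $120\mid q+1$, giving $q\equiv-1\pmod{120}$.

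For the square-class assertion, apply Theorem \ref{thm_square_class} with the perfect group $L'=H_0$, where $2.A_5$ is perfect. The hypothesis $\Stab_{H_0}(P)=\Stab_{H_0}(\bfv)$ holds because both are trivial. For $gP\neq P$, this yields $B(\bfv_M,g\bfv_M)\in-\frac{\operatorname{disc}(Q)}{2}\blacksquare_q$. Since $\operatorname{disc}(Q)\in2\square_q$, this class is $-\blacksquare_q$. Here $q\equiv-1\pmod 4$, so $-1$ is a nonsquare and the class equals $\square_q$. Any two points in the same orbit have the form $\bfv_N=g\bfv_M$ with the normalized representative, because $g$ fixes the first coordinate. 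This gives the final assertion.

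The main obstacle is the normality step in Case (II), needed to deduce $120\mid q+1$ when $G$ may interchange the two factors. If both factors are interchanged by $G$, we take the normal subgroup $H$ and note that its orbits are unions of $H_0$-orbits of size $120$ of equal lengths, which still forces $120\mid q+1$ through Lemma \ref{Nelson-lemma} applied to $H$ and the divisibility $120\mid|P^H|$.
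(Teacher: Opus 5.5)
Your proposal is correct and follows essentially the same route as the paper: a fixed point forces $Ma^{-1}=M$ with $\det M=1$, so stabilizers are trivial; the regular orbits of length $120$ partition $\cB$, giving $120\mid q+1$; and Theorem~\ref{thm_square_class} applied to the perfect group $H_0$, with $\operatorname{disc}(Q)\in2\square_q$ and $q\equiv3\pmod 4$, gives the square class. The normality discussion is unnecessary and should be dropped, including the opening claim $H_0\unlhd G$, which you do not justify in Case~(II): the partition of the $H_0$-invariant set $\cB$ into orbits of equal length $120$ already yields the divisibility.
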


\begin{proof}
By symmetry, assume $H_0:=L\circ I_2$. If $g\circ I_2$ fixes $P=\la(1,M)\ra$, then $gM=M$, and hence $g=I_2$ since $\det(M)=1$. It follows that $H_0$ acts semiregularly on $\cB$, so its orbits have length $120$ and
$120\mid (q+1)$.

 Since $H_0$ is perfect, Theorem \ref{thm_square_class} gives 
 \[
  B(\bfv_M,g\bfv_M)\in
  -\frac{\operatorname{disc}(Q)}2\blacksquare_q=\square_q,\quad\, \bfv_M=(1, M)
 \]
 for every nonidentity $g\circ I_2\in H_0$, because
 $\operatorname{disc}(Q)\in2\square_q$ and $q\equiv3\pmod4$.
\end{proof}
For Cases \textup{(II)} and \textup{(III)},  since $q\equiv-1\pmod{120}$, $e$ is odd by considering the reduction modulo $8$. Thus $e\equiv 1$ or $3\pmod 4$.  Since every
unit modulo $120$ has fourth power $1$, the congruence
$p^e\equiv-1\pmod{120}$ implies in either case that
$p\equiv-1\pmod{120}$. Consequently,
\begin{equation}\label{eqn_hyperbolic_prime_congruence}
e\, \text{ is odd},\qquad p\equiv-1\pmod{120},\qquad p\geq239.
\end{equation}

For $L\cong2.A_5\leq\SU_2(q)\cong \SL_2(q)$, we have 
\begin{equation}\label{eqn_2A5_self_normalizer}
 N_{\SU_2(q)}(L)=L.
\end{equation}
Indeed, the outer automorphism interchanges the two classes of elements of order five, whose distinct traces are $b_5$ and $-1-b_5$. Since conjugation preserves trace, a normalizing element induces an inner automorphism. After multiplication by an element of $L$, the normalizing element centralizes $L$ and is therefore scalar by Schur's lemma; the scalar subgroup $\{\pm I_2\}$ lies in $L$.

\begin{lemma}\label{lem_A5_twisted_diagonal_exclusion}
Case \textup{(I)} does not occur.
\end{lemma}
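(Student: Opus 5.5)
The plan is to show that in Case~(I) the group $H$ acts absolutely irreducibly on the $4$-dimensional summand $\mathcal M$. This pins down $C_V(H)$, forces $|\Gso|$ to divide $240$, and reduces the problem to $q\in\{79,239\}$, exactly as in type~(v) of Lemma~\ref{lem_A5_geometric_reduction}.

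\textbf{Step 1: the module $\mathcal M$ is irreducible.} Work in the model $\mathcal M=M_2(q)$, where $a\circ\sigma(a)$ acts by $M\mapsto aM\sigma(a)^{-1}$. As an $L_1$-module, $\mathcal M$ is therefore $N_1\otimes N_2^{*}$. Here $N_1$ is the natural $2$-dimensional module of $L_1$, and $N_2$ is the natural module of $L_2$ pulled back to $L_1$ through $\sigma$.

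Since $\sigma$ is not induced by conjugation in $\GL_2(q)$, the two faithful $2$-dimensional representations are not equivalent. Equivalently, their characters differ: the traces of the elements of order five are interchanged, taking the values $b_5$ and $-1-b_5$.

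I will use the character table of $2.A_5$, which is valid here because $p\nmid 120$ when $p\ne5$ and $q\equiv\pm1\pmod{10}$. The tensor product of the two non-isomorphic faithful $2$-dimensional characters is the irreducible $4$-dimensional character of $A_5$. It suffices to check that the inner product with the trivial character, and with the $3$-dimensional characters, vanishes.

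Hence $H\cong A_5$ acts absolutely irreducibly on $\mathcal M$, and $C_V(H)=\F_q\oplus0$. It follows that $G$ fixes the non-singular point $P=\la(1,0)\ra$, with $P^\perp\cap\cQ(4,q)$ hyperbolic.

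\textbf{Step 2: bound the normalizer.} I will bound $N_{\Stab_{\SO_5(q)}(P)}(H)$. Since $H$ is characteristic in $\Gso$, we have $\Gso\le N_{\mathrm O_4^+(q)}(H)$.

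By Schur's lemma and absolute irreducibility, $C_{\mathrm O_4^+(q)}(H)\le\{\pm I\}$. Combined with $|\mathrm{Out}(A_5)|=2$, this gives
\[
|N_{\mathrm O_4^+(q)}(H)|\ \Big|\ 2\cdot2\cdot60=240 .
\]
Hence $|\Gso|\mid 240$.

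\textbf{Step 3: reduce to two primes.} I will repeat verbatim the primitive-divisor argument from the proof of Lemma~\ref{lem_A5_geometric_reduction}, type~(v).

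If $e>1$, a primitive prime divisor $r$ of $p^{2e}-1$ divides $240$ and satisfies $2e\mid r-1$. This forces $(r,e)=(5,2)$, and then \eqref{eqn_divisiblecond} gives $(p^2+1)/2\mid240$, which implies $q=9$. That contradicts $q\ge71$, so $e=1$.

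With $e=1$, \eqref{eqn_divisiblecond} gives $(q+1)\mid240$. Together with $q\ge71$ and $q\equiv\pm1\pmod{10}$, this leaves $q\in\{79,239\}$.

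As a supplementary check, Theorem~\ref{thm_square_class} applies because $\Stab_H(P)=\Stab_H(\bfv_M)$ by the normalization \eqref{eqn_hyperbolic_normalized_point}. It gives $B(\bfv_M,h\bfv_M)=2-B_{\mathcal M}(M,hM)$ in a fixed square class for every $h\in H$ moving the point. This could prune candidates, but it is not needed for the reduction.

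\textbf{Step 4: exclude the remaining primes.} For $q\in\{79,239\}$, I will finish with an exhaustive Magma computation. It enumerates the $H$-orbits on points $\la(1,M)\ra$ of length dividing $q+1$, and checks whether any union of orbits forms a $G$-invariant BLT-set via Lemma~\ref{lem_disc}. The expectation is that none exists, as in Lemma~\ref{lem_A5_geometric_reduction}.

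The main obstacle I expect is Step~1, specifically justifying cleanly that a non-inner $\sigma$ yields the twisted pair of $2$-dimensional representations. To handle it, I would first note that every automorphism of $2.A_5$ either preserves or swaps the two classes of elements of order five. If $\sigma$ preserves them, the characters agree, the representations are equivalent, and $\sigma$ is realized by a matrix in $\GL_2(q)$, which contradicts the Case~(I) hypothesis. So $\sigma$ swaps the classes, and the tensor-product computation goes through.
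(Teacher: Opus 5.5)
\textbf{Gap in Step~1: the case $p=3$.}
Your route is the paper's route. You prove absolute irreducibility of $\mathcal M$ under the twisted diagonal $H$, then use Schur's lemma and $|\mathrm{Out}(A_5)|=2$ to get $|\Gso|\mid240$, then the primitive-divisor argument to reach $e=1$ and $q\in\{79,239\}$, then an exhaustive Magma search. (The paper phrases Step~1 as identifying $\mathcal M$ with the fully deleted permutation module, which is the same computation $2\otimes2'=4$.)

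However, Step~1 rests on a false claim that leaves a real case open: it is not true that $p\nmid120$.
\begin{itemize}
\item The standing hypotheses of Section~\ref{sec:4} allow $p=3$, since $3^e\equiv\pm1\pmod{10}$ whenever $e$ is even (e.g.\ $q=81$ or $729$).
\item The congruence $q\equiv-1\pmod{120}$ that disposes of $p=3$ in Cases~\textup{(II)} and~\textup{(III)} comes from Lemma~\ref{lem_hyperbolic_factor_orbit}, which does not apply to Case~\textup{(I)}.
\item In characteristic $3$ the group algebra of $2.A_5$ is not semisimple. So inner products of ordinary characters do not show that $\mathcal M$ is irreducible.
\item Your Steps~2 and~3 (Schur's lemma, hence $|\Gso|\mid240$, hence $e=1$) all depend on that irreducibility.
\end{itemize}
As written, the proposal therefore does not exclude Case~\textup{(I)} for $q=3^e$ with $e\ge4$ even.

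\textbf{How to repair it.} You can use the $3$-modular Brauer table, as the paper does. The product of the two faithful $2$-dimensional Brauer characters of $2.A_5$ agrees on $3$-regular classes with the character $4$ of $A_5$, which stays irreducible mod $3$.

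Alternatively, argue independently of the characteristic:
\begin{itemize}
\item Let $a\in L_1$ have order five, with eigenvalues $z^{\pm1}$. Since $\sigma$ swaps the two classes, $\sigma(a)$ has eigenvalues $z^{\pm2}$.
\item Hence $a\circ\sigma(a)$ acts on $\mathcal M$ with eigenvalues $z,z^2,z^3,z^4$, and has no eigenvalue $1$.
\item Over $\overline{\F}_q$, any composition factor of dimension $1$ or $2$ would be trivial. This is because $A_5$ is perfect and cannot embed in $\SL_2$ in odd characteristic, where $-I_2$ is the only involution.
\item A trivial factor would give eigenvalue $1$, so $\mathcal M$ is absolutely irreducible for every odd $p\ne5$.
\end{itemize}
With either fix, Steps~2--3 go through verbatim for $p=3$.

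\textbf{Step~4 needs two details made explicit.} First, for $q\in\{79,239\}$ we have $\mathrm{lcm}(60,q+1)=240$, so $\overline G$ is the whole normalizer of $\overline H$ (of order $240$) and $\cB$ is a single orbit of it; this is what makes the search small. Second, the search must cover both conjugacy classes of twisted diagonal subgroups, equivalently both square classes $d\in\{1,\nu\}$ of the invariant form on $\mathcal M$ relative to $x^2$, not just one fixed $H$. The paper makes both points explicit.
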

\begin{proof}
 In Case \textup{(I)}, set $L=L_1$ and choose $C\in\GL_2(q)$ such that $C^{-1}L_1C=L_2$. Then $\alpha(a)=C\sigma(a)C^{-1}$ defines an automorphism of $L$ representing the nontrivial element of $\operatorname{Out}(L)$;
otherwise $\sigma$ would be induced by matrix conjugation. Let $U$ be the natural two-dimensional $\F_qL$-module. The two factors of the graph action afford $U$ and its twist $U^\alpha$.

Using the ordinary character table in \cite{ATLAS}*{p.~2} and,
when $p=3$, the $3$-Brauer table in
\cite{BrauerAtlas}*{p.~2}, we can identify
$\mathcal M$ with the four-dimensional fully deleted permutation
module. Since $p\ne5$, we can identify this module with
\[
 \mathcal{M}=\{(y_1,\ldots,y_5)\in\F_q^5:\textstyle\sum_i y_i=0\},
 \qquad \F_q^5=\F_q\mathbf1\oplus \mathcal{M};
\]
see \cite{KL129}*{Section~5.3, pp.~185--187}.  By Schur's lemma, the centralizer of $H$ on $\mathcal{M}$ consists of scalars.   Since $H$ is perfect and $\mathrm{Out}(A_5)=2$, we deduce from  \cite{BHR407}*{Table 8.22} that  \[ [N_{\PGO_5(q)}(\overline H): \overline{H} ] \mid 4.\]
Together with the Frobenius automorphisms, we have $|\overline{G}| \mid 240e$.  If $e>1$, let $r$ be a primitive prime divisor of $p^{2e}-1$. Then
$r\mid (q+1)$, $2e\mid r-1$, and $r\nmid e$. Thus $r\mid240$, which
forces $r=5$ and $e=2$. It follows that $p^2+1\mid480$, forcing
$p=3$ and $q=9$, contrary to $q\geq71$. Hence $e=1$, and
$(q+1)\mid 240$ leaves $q\in\{79,239\}$. In either case
$\operatorname{lcm}(60,q+1)=240$; since
$\overline H\leq\overline G\leq N_{\PGO_5(q)}(\overline H)$, transitivity now gives
$\overline G=N_{\PGO_5(q)}(\overline H)$.

By \eqref{eqn_hyperbolic_normalized_point}, every point of $\cB$ has a
unique representative $(1,\mathbf y)$, with $\mathbf y\in \mathcal{M}$. The two
relative square classes of the invariant form are represented by
\[
 Q_d(x,\mathbf y)=x^2+d\sum_{i=1}^5y_i^2,
 \qquad d\in\{1,\nu\}.
\]
For $q\in\{79,239\}$, an exhaustive computation in
Magma~\cite{Magma} checks both classes and finds no orbit of length $q+1$
satisfying the partial BLT condition. Hence Case \textup{(I)} is impossible.
\end{proof}

\begin{lemma}\label{lem_A5_product_exclusion}
Case \textup{(II)} does not occur.
\end{lemma}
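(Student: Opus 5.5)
In Case (II) we have $H=L_1\circ L_2$ with $L_1,L_2\cong2.A_5$. The plan is to bound $|\overline G|$ by a small multiple of $e$, use transitivity to pin down $q$, and then get a contradiction from the point stabilizers together with Lemma \ref{lem_hyperbolic_factor_orbit}.

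\textbf{Bounding $|\overline G|$.} Since $H=\Gso^{(\infty)}$ is characteristic in $\Gso$, we have $\Gso\le N_{\Stab_{\SO_5(q)}(P)}(H)$, where $P=\la(1,0)\ra$. By \eqref{eqn_2A5_self_normalizer}, each $L_i$ is self-normalizing in its $\SL_2(q)\cong\SU_2(q)$ factor. The only further contributions come from:
\begin{itemize}
\item the factor interchange;
\item the index-$2$ extension $\mathrm O_4^+(q)/\Omega_4^+(q)$ restricted to $\SO_5$;
\item elements of $\GL_2(q)\times\GL_2(q)$ normalizing the $L_i$, which are scalars modulo $L_i$.
\end{itemize}
Hence $[N:H]$ divides a small power of $2$, say $8$, and so $|\overline G|$ divides $3600\cdot 8\cdot e$. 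Every prime divisor of $|\overline G|$ therefore lies in $\{2,3,5\}$ or divides $e$.

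\textbf{Pinning down $q$.} By \eqref{eqn_hyperbolic_prime_congruence}, $e$ is odd and $p\equiv-1\pmod{120}$. If $e>1$, Lemma \ref{lem_primitive} gives a primitive prime divisor $r$ of $p^{2e}-1$ dividing $|\Gso|$. Since $r\equiv1\pmod{2e}$, we have $r\nmid e$, so $r\in\{2,3,5\}$. This is impossible because $r\ge 2e+1\ge7$. Hence $e=1$ and $q=p$. Then $(q+1)\mid|\overline G|$, with $\gcd(q+1,\text{odd part})$ controlled, forces $q+1\mid 2^a\cdot3^2\cdot5^2$ for small $a$. Together with $q\equiv-1\pmod{120}$ and $q$ prime, this leaves a short list, e.g.\ $q\in\{239,359,599,719,1439,1799?,3599,\dots\}$ filtered to primes. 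I would then compute the exact list of primes $q$ with $q+1\mid 28800$ and $120\mid q+1$.

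\textbf{Stabilizer argument.} An $H$-orbit on $\cB$ has length dividing $q+1$. The stabilizer $\Stab_H(P)$ for $P=\la(1,M)\ra$ is $\{g_1\circ g_2: g_1Mg_2^{-1}=M\}$, which is a diagonal copy $\{g\circ M^{-1}gM\}$ intersected with $L_1\circ L_2$, of order at most $60$. So $|P^H|\ge 60$. Moreover, $|P^H|=60$ exactly when $M^{-1}L_1M=L_2$ via an inner-type isomorphism. In that case $\la P\ra$ together with $C_V(\Stab_H(P))$ would give a $G$-orbit structure reducing to the graph case. Otherwise the stabilizer is a proper subgroup of $A_5$ and the orbit length is at least $3600/12=300$.

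Lemma \ref{lem_hyperbolic_factor_orbit} applies to both $L_1\circ I_2$ and $I_2\circ L_2$. Hence all pairs in a common $L_i$-orbit have square $B$-values, and the $L_1$- and $L_2$-orbits through $P$ meet only in $P$. So $P^H$ contains at least $1+2\cdot119=239$ points, forcing $|P^H|\ge 240$ and, for $q+1\le 240$, a contradiction unless $q\ge 479$. For the finitely many remaining primes, I would finish by an exhaustive Magma check of $H$-orbits of admissible length satisfying Lemma \ref{lem_disc}, as in Lemma \ref{lem_A5_twisted_diagonal_exclusion}.

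The main obstacle is making the normalizer bound rigorous enough that the list of $q$ is genuinely finite and small. If that fails, I would instead use a Weil-type estimate like Lemma \ref{lem_caseI_torus_sum} on the square-class condition along a cyclic subgroup of $L_1$ of order $5$ or $3$ to bound $q$.
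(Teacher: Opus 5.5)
Your first two steps match the paper. $N_{\Omega_4^+(q)}(H)=H$ by \eqref{eqn_2A5_self_normalizer} and $|\mathrm O_4^+(q):\Omega_4^+(q)|=4$ give $[\overline G:\overline H]\mid 4e$; your ``$3600\cdot 8$'' should read $7200\cdot 4$, which happens to give the same product. Your Zsigmondy argument is a valid substitute for the paper's size comparison in forcing $e=1$. The exact list of primes is $q\in\{239,359,479,599,719,1439,2399,2879,4799\}$.

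The stabilizer step, however, contains two errors, and your argument depends on both.
\begin{itemize}
\item $H$ fixes $(1,0)$, so it meets the scalars trivially and $|\overline H|=7200$. Hence $\Stab_{\overline H}(P)\cong(M^{-1}L_1M\cap L_2)/\{\pm I_2\}$ is a subgroup of $A_5$ of some order $r$. Orbit lengths are therefore $7200/r\geq 120$, with equality exactly when $M^{-1}L_1M=L_2$.
\item The $L_1\circ I_2$- and $I_2\circ L_2$-orbits of $P$ never meet only in $P$. Since $(-I_2)\circ I_2=I_2\circ(-I_2)$, both contain $\la(1,-M)\ra$. In general they share exactly $|M^{-1}L_1M\cap L_2|$ points, and they coincide when $M^{-1}L_1M=L_2$.
\end{itemize}
So the count $1+2\cdot119$ fails and your exclusion of $q=239$ is unfounded. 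Because your final computation only treats the primes left after that step, $q=239$ is never actually handled. Nor does the case $M^{-1}L_1M=L_2$ ``reduce to the graph case'': $H$ is not a graph subgroup here.

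The missing idea is the one the paper uses to close every subcase. Suppose every point matrix $M$ satisfies $M^{-1}L_1M=L_2$. Then for point matrices $M,N$, the element $M^{-1}N\in\SL_2(q)$ normalizes $L_2$, so it lies in $L_2$ by \eqref{eqn_2A5_self_normalizer}. Hence $\cB\subseteq\{\la(1,Mg)\ra:g\in L_2\}$, a set of $120<q+1$ points.

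Write $q+1=7200m/r$, where $m\in\{1,2,4\}$ is the number of $\overline H$-orbits. This excludes $q=359$ and forces $r=60$ for $q\in\{239,479\}$.

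For the other six primes the paper does not enumerate $H$-orbits. Theorem \ref{thm_square_class}, applied to $s\circ g^{-1}$ with $s\in L_1$ of trace $1$ and $g\in L_2$, shows that $2-\tr(M^{-1}sMg)$ is $0$ or a nonzero square. A search over the $q(q-1)$ trace-one matrices of $\SL_2(q)$ shows that this forces $M^{-1}sM\in L_2$. Since the $20$ trace-one elements generate $L_1$, again $M^{-1}L_1M=L_2$.

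Your ``exhaustive check of $H$-orbits of admissible length'' is finite in principle. As stated, though, it runs over roughly $q^3/7200$ double cosets $L_1\backslash\SL_2(q)/L_2$ for $q$ up to $4799$, and you give no reduction that makes it tractable. Your Weil-type fallback cannot help either: a character sum over a cyclic subgroup of order $3$ or $5$ has at most five terms and yields no bound on $q$.
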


\begin{proof}
 In this case, we have $H=L_1\circ L_2$ and
$|\overline H|=7200$, where $L_i\cong2.A_5$. By Lemma \ref{lem_hyperbolic_factor_orbit}, $q\equiv-1\pmod{120}$. Considering the projection onto each factor and applying \eqref{eqn_2A5_self_normalizer}, we obtain  $N_{\Omega_4^+(q)}(H)=H$. Using
$|\mathrm O_4^+(q):\Omega_4^+(q)|=4$ together with \eqref{eqn_orthogonal_core}, we deduce that
\[
 [\overline G:\overline H]\mid4e,\qquad (q+1)\mid 28800e.
\]

Since \eqref{eqn_hyperbolic_prime_congruence} holds, we have $p\geq239$ and $e$ is odd. If $e>1$, then $e\geq3$ and $p^e+1>28800e$, which is a contradiction to the preceding divisibility. Thus $e=1$, and the divisibility above leaves the following primes:
\[
 q\in\{239,359,479,599,719,1439,2399,2879,4799\}.
\]

For $P=\la\bfv_M\ra$ as in \eqref{eqn_hyperbolic_normalized_point}, the nonzero first coordinate implies that
\[
 \Stab_{\overline H}(P)
 \cong(M^{-1}L_1M\cap L_2)/\{\pm I_2\}.
\]
Since $[\overline G:\overline H]\mid4$, the number $m$ of
$\overline H$-orbits on $\cB$ lies in $\{1,2,4\}$. If $r$ is their common
point-stabilizer order, then
$r\in\{1,2,3,4,5,6,10,12,60\}$ and $q+1=7200m/r$. This excludes
$q=359$. For $q=239$ and $479$, it forces $r=60$, and hence
$M^{-1}L_1M=L_2$ for every point matrix $M$.
For any two such matrices $M,N$ representing points $\la\bfv_M\ra,\la\bfv_N\ra\in\cB$, the element $M^{-1}N$ normalizes $L_2$
and hence lies in $L_2$ by \eqref{eqn_2A5_self_normalizer}. Thus $\cB$ is
contained in a single $120$-point orbit,  contrary to $q+1>120$. This leaves $q\in\{599,719,1439,2399,2879,4799\}$.

For each remaining prime and each embedding class of $L_2$, an exact computation in Magma~\cite{Magma} shows that 
\[
 \{X\in\SL_2(q):\tr(X)=1,\ 
   \chi(2-\tr(Xg))\in\{0,1\}\ \text{for all }g\in L_2\}
 \subseteq\{X\in L_2:\tr(X)=1\},
\]
and the set on the right consists of the $20$ elements of order $6$ in $L_2$.\footnote{In $\SL_2(q)$, since $-3\in \blacksquare_q $ by $q\equiv -1 \pmod{3}$, each element of $\SL_2(q)$ with trace $1$ can be written as 
	\[ \begin{pmatrix}
		a & b \\ b^{-1}(a-a^2-1) & 1-a
	\end{pmatrix},\, a\in\F_q,\, b\in \F_q^{\times}.\]}

Let $P=\la(1,M)\ra\in\cB$, let $s\in L_1$ have trace one, and let
$g\in L_2$. Then
\[
 B((1,M),(1,sMg))=2-\tr(M^{-1}sMg).
\]
By Theorem \ref{thm_square_class}, this is zero if $s\circ g^{-1}$ fixes
$P$ and a nonzero square otherwise. Hence the above inclusion implies $M^{-1}sM\in L_2$. Conjugation
by $M$ maps the $20$ trace-one elements of $L_1$ onto those of $L_2$. Since these elements generate $2.A_5$, we deduce that  $M^{-1}L_1M=L_2$. As a consequence, for any two point matrices $M,N$, we have
$M^{-1}N\in N_{\SL_2(q)}(L_2)=L_2$. Thus all points lie in one $120$-point orbit, contrary to $q+1>120$. So Case \textup{(II)} is impossible.
\end{proof}

In Case \textup{(III)},
$H=I_2\circ L$ and $L\cong2.A_5\leq\SU_2(q)$. By Lemma \ref{lem_hyperbolic_factor_orbit}, we have $q\equiv-1\pmod{120}$.  Write
\[
 S=\{s\in\SU_2(q):\pi(s\circ I_2)\in\overline G\},
 \qquad Z=\{\pm I_2\}\leq S.
\]
Here $Z\leq S$ because $(-I_2)\circ I_2=I_2\circ(-I_2)\in H$. A direct
calculation gives
\[
 C_{\PGO_5(q)}(\overline H)
 =\overline{\SU_2(q)\circ I_2}.
\]
The trace argument following \eqref{eqn_2A5_self_normalizer} also
excludes an outer automorphism here: the two conjugacy classes of
elements of order five in $L$ have distinct traces $2b_5$ and
$-2(1+b_5)$ on the four-dimensional space $\mathcal M$, and an outer
automorphism interchanges these classes. Hence
\begin{equation*}
 N_{\PGO_5(q)}(\overline H)
 =C_{\PGO_5(q)}(\overline H)\,\overline H.
\end{equation*}
It follows that $\overline{G}\cap \PGO_5(q)=\overline{ S\circ L}$. Since $S\circ L$ fixes the $1$-space $\F_q\oplus\{0\}$ pointwise, it can be identified with its projective image $\overline{S\circ L}$. Hence
\begin{equation}\label{eqn_one_factor_normalizer}
 \overline{S\circ L}\unlhd\overline G,
 \qquad [\overline G:\overline{S\circ L}]\mid e.
\end{equation}
Moreover,
\begin{equation}\label{eqn_one_factor_centralizer}
 \overline{S\circ I_2}
 =\overline G\cap C_{\PGO_5(q)}(\overline H)
 \unlhd\overline G.
\end{equation}
It is obvious that $S$ is solvable since $\Gso^{(\infty)}=H=I_2\circ L$, where $L\cong 2.A_5$. 

For each $y\in\big\la\xi \big\ra$, define
\[
 d_y=\begin{pmatrix}y&0\\0&y^{-1}\end{pmatrix}.
\]

\begin{lemma}\label{lem_hyperbolic_complement_reduction}\label{lem_hyperbolic_toral_core}
	In Case \textup{(III)}, up to conjugacy in the first factor, we may
take the nonsplit maximal torus $T=\la d_\xi\ra\leq\SU_2(q)$ of order
$q+1$ so that
\[
 S\leq N_{\SU_2(q)}(T),\qquad
 Z\leq D:=S\cap T,\qquad [S:D]\leq2.
\]
The subgroup $D$ is cyclic, and $D=S$ if $S$ is cyclic. Moreover,
\[
 \overline{D\circ L}\unlhd\overline G,\qquad
 [\overline G:\overline{D\circ L}]\mid2e.
\]
\end{lemma}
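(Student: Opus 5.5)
We first locate $S$ inside $\SU_2(q)\cong\SL_2(q)$. $S$ is solvable and contains $Z=\{\pm I_2\}$, and $\overline{S\circ I_2}$ is normal in $\overline G$ by \eqref{eqn_one_factor_centralizer}. Since $H=I_2\circ L$ acts semiregularly on $\cB$ (Lemma \ref{lem_hyperbolic_factor_orbit}) and commutes with $S\circ I_2$, the group $S\circ I_2$ acts on the set of $H$-orbits, and its orbits on $\cB$ have a common length dividing $q+1$ (Lemma \ref{Nelson-lemma}). A point $\la(1,M)\ra$ is fixed by $s\circ I_2$ only if $sM=M$, which forces $s=I_2$. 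Hence $S/Z$ acts semiregularly on $\cB$, and $|S/Z|$ divides $q+1$. In particular $p\nmid|S|$. We then invoke Dickson's classification of the solvable subgroups of $\PSL_2(q)$ of order coprime to $p$: $S/Z$ is cyclic, dihedral, or one of $A_4,S_4,A_5$ (the last is excluded by solvability). The orders of $A_4$ and $S_4$ are divisible by $3$, so these cases must be ruled out separately, or else absorbed when $S$ lies in a torus normalizer.

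Next we exclude $A_4$ and $S_4$. We plan to use the square-class homomorphism of Theorem \ref{thm_square_class}, applied to the perfect-free group $S\circ I_2$. Its restriction to $Q_8\circ I_2$, where $2.A_4$ contains $Q_8$, together with the requirement that $|S/Z|$ divide $q+1$, should force $4\mid q+1$, so $q\equiv 3\pmod 8$ or $q\equiv 7\pmod 8$. We expect a contradiction with $q\equiv-1\pmod{120}$: that congruence makes $8\mid q+1$, which is compatible, so the semiregularity argument alone does not suffice. The fallback is that $[S:S']$-type arguments do not give $D$, so we would rather use the structure directly. If $S/Z\cong A_4$ or $S_4$, then $S$ contains $Q_8$ and an element of order $3$ normalizing it. The element of order $3$ lies in a torus of order $q+1$, since $3\mid q+1$. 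The subgroup generated by $S$ and $L$ then acts semiregularly with $|S\circ L|$ dividing $(q+1)e$-type bounds. We would run the $\epsilon$-homomorphism on $2.A_4\circ I_2$: since $(2.A_4)'=Q_8$ and $Q_8$ is not perfect, it is not immediate. This exclusion is the main obstacle. The first attempt would be the counting bound $|S\circ L|/|Z|\le (q+1)\cdot|\text{stabilizer}|$ combined with Lemma \ref{lem_caseI_torus_sum}-style estimates; if that fails, we would compute the squares $B(\bfv_M,s\bfv_M)=2-\tr(s)$ for $s\in Q_8$. These squares equal $2$ for trace-zero elements, which must lie in the same class as $4$ for $s=-I_2$ under Lemma \ref{lem_disc}, forcing $2\in\square_q$. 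We then seek a parity contradiction.

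Once $S/Z$ is cyclic or dihedral of order dividing $2(q+1)$ (using $|S/Z|\mid q+1$ and $p\nmid |S|$), conjugate in $\SU_2(q)$ so that its cyclic part lies in the nonsplit torus $T=\la d_\xi\ra$. Split tori are impossible because their orders divide $q-1$ and $\gcd(q-1,q+1)=2$. Then $S\le N_{\SU_2(q)}(T)$, $D=S\cap T$ is cyclic with $Z\le D$ and $[S:D]\le2$, and $D=S$ when $S$ is cyclic. The latter holds because a cyclic $S$ of order greater than $2$ lies in a unique torus, and the case $|S|\le 2$ is trivial. Finally, $D$ is characteristic in $S$ when $[S:D]=2$ and $|D|>2$. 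In the small case, take $D$ to be the cyclic subgroup and check directly. So $\overline{D\circ I_2}\unlhd\overline G$ by \eqref{eqn_one_factor_centralizer}, and hence $\overline{D\circ L}\unlhd\overline G$. Its index is $[\overline G:\overline{S\circ L}][S:D]$, which divides $2e$ by \eqref{eqn_one_factor_normalizer}.
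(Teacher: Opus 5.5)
Your first step is correct and neater than the paper's. The element $s\circ I_2$ sends $(1,M)$ to $(1,sM)$, so $S\circ I_2\cong S$ acts semiregularly on $\cB$. (It is $S$ itself, not $S/Z$, since $(-I_2)\circ I_2$ moves every point.) Normality of $\overline{S\circ I_2}$ and Lemma \ref{Nelson-lemma} then give $|S|\mid q+1$. This disposes of unipotent and split-torus elements at once, where the paper argues element by element with the BLT condition and $\epsilon$.

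The genuine gap is the exclusion of $S/Z\cong A_4$ or $S_4$, which you leave open. This is the real content of the lemma, since neither $2.A_4$ nor $2.S_4$ lies in a torus normalizer. Your suggested routes cannot close it:
\begin{itemize}
\item The condition $|S|\mid q+1$ only requires $24\mid q+1$, which is automatic from $q\equiv-1\pmod{120}$, or $48\mid q+1$, which is satisfiable.
\item Trace-zero elements giving $B=2$, hence $2\in\square_q$, is no contradiction. Since $q\equiv-1\pmod 8$, $2$ is already a square; the paper uses exactly this fact to show $\epsilon$ is \emph{trivial}.
\end{itemize}
The paper closes this case in three steps.
\begin{enumerate}
\item A finiteness reduction: $|\overline{S\circ L}|=60|S|$ and $[\overline G:\overline{S\circ L}]\mid e$ give $(q+1)\mid 60e|S|$. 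With $p\ge239$ and $e$ odd, this forces $e=1$, $\overline G=\overline{S\circ L}$, and a short list of primes.
\item $\epsilon$ is trivial on $S\circ I_2$: for $2.A_4$ because the abelianization is $C_3$, and for $2.S_4$ because $\chi(2)=1$ on a trace-zero lift of a transposition. Hence $\chi(2-\tr(M^{-1}sMg))\in\{0,1\}$ for all $s\in S$, $g\in L$.
\item An exhaustive computation of $\mathcal E_L(1)$ and $\mathcal E_L(\tau)$, with $\tau^2=2$, for those primes. For $2.A_4$ this shows $M^{-1}SM\le L$, so $\{s\circ M^{-1}sM:s\in S\}$, of order $12$, fixes $P$ and the orbit has length at most $120<q+1$. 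For $2.S_4$ it rules out the order-$8$ elements of trace $\pm\tau$.
\end{enumerate}
Nothing in your proposal replaces this finite verification.

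There is also an error in your last step. The index-two cyclic subgroup is not characteristic when $S\cong Q_8$: here $|D|=4>2$, but there are three cyclic subgroups of order $4$. This case is live because $8\mid q+1$, whereas your ``small case'' $|D|\le2$ never occurs for noncyclic $S$. A field automorphism in $\overline G$ could a priori permute these three subgroups, so normality of $\overline{D\circ L}$ needs a separate argument. The paper gets it from $(q+1)\mid480e$, which with $p\ge239$ forces $e=1$ and hence $\overline G=\overline{S\circ L}$. Then $\overline{D\circ L}$ has index $2$ and is automatically normal.
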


\begin{proof}
Fix $P=\la\bfv_M\ra\in\cB$. Since the first coordinate of $\bfv_M$ equals $1$, we have $\Stab_{S\circ L}(\bfv_M)=\Stab_{S\circ L}(P)$.  By Theorem 
\ref{thm_square_class},  there is a homomorphism
\[
\epsilon:\, S\circ L\longrightarrow\{\pm1\}.
\]
Recall from Lemma \ref{lem_hyperbolic_factor_orbit} that
$q\equiv -1 \pmod{120}$. Since $\operatorname{disc}(Q)\in2\square_q$, we have
$\epsilon(a)=\chi(B(\bfv_M,a\bfv_M))$ whenever $aP\ne P$.
Moreover, $\epsilon$ is trivial on the perfect subgroup $H=I_2\circ L$.

We first exclude elements of order $p$ and non-central elements whose
order divides $q-1$.   If $p\mid|S|$, then $S$ must contain an element $g$ of order $p$, necessarily unipotent with trace $2$. Since $gM \ne M$, the points $P$ and $(g\circ I_2)P$ are distinct, but
\[
 B(\bfv_M,(g \circ I_2)\bfv_M)=2-\tr(g)=0,
\]
contradicting the BLT condition. Thus $p\nmid|S|$.
Next suppose that $g\in S\setminus Z$ has order dividing $q-1$, and put $t=\tr(g)=\lambda+\lambda^{-1}$, where $\lambda,\lambda^{-1}$ are two eigenvalues of $g$ in $\F_q^\times$.  Since $g\notin Z$, the points
\[ P,\quad (g\circ I_2)P, \quad (g\circ (-I_2))P\]
are distinct. By Lemma \ref{lem_disc}, we have  $\epsilon(g\circ I_2)=\epsilon(g\circ -I_2)$, which implies $\chi(2-t)=\chi(2+t)$. However, $t^2-4=(\lambda-\lambda^{-1})^2$ is a nonzero square, whereas
$-1$ is a nonsquare. Hence
\[
 \chi\bigl((2-t)(2+t)\bigr)=\chi(4-t^2)=-1,
\]
which is a contradiction.

By Dickson's classification \cite{Dickson1904}*{Chapter~XII}, a solvable subgroup $S/Z$ of $\PSL_2(q)$ with these properties
is contained in the normalizer of a nonsplit torus or is isomorphic to $A_4$ or $S_4$. It remains to exclude the
last two possibilities, whose preimages are $S\cong2.A_4$ and
$S\cong2.S_4$. In either case, $|S\circ L|=60|S|$, so it follows  from the transitivity of $\overline{G}$ and
\eqref{eqn_one_factor_normalizer} that  $
 (q+1)\mid 60e|S|.$
By \eqref{eqn_hyperbolic_prime_congruence}, $p\geq239$ and $e$ is odd.
If $e>1$, then $e\geq3$ and
$p^e+1>2880e\geq60e|S|$, which is  a contradiction to $(q+1) \mid 60e|S|$. Thus $e=1$, and by 
\eqref{eqn_one_factor_normalizer} we have 
$\overline G=\overline{S\circ L}$. The divisibility above, together with
$q\equiv-1\pmod{120}$, leaves the following primes:
\[
\begin{array}{c|l}
S&q\\ \hline
2.A_4&239,359,479,719,1439\\
2.S_4&239,359,479,719,1439,2879.
\end{array}
\]

We first claim that  the homomorphism $\epsilon$ is  trivial on $S\circ I_2$ in both exceptional cases. For $S\cong2.A_4 \cong \SL_2(3)$, this follows from $S/[S,S]\cong C_3$. For
$S\cong2.S_4$, we have $S/[S,S]\cong C_2$. Choose $s\in S$  corresponding to a transposition in $S/Z\cong S_4$. Then $s\in \SL_2(q)$ has order $4$ and has trace zero, so $\epsilon(s\circ I_2)=\chi(2)=1$, since $q\equiv-1\pmod8$.
Thus $\epsilon$ is trivial on all of $S\circ L$. Consequently, by $\chi(-\mathrm{disc}(Q)/2)=-1$, we have 
\begin{equation}\label{eqn_chiA4S4}
 \chi\bigl(2-\tr(M^{-1}sMg)\bigr)\in\{0,1\}
 \qquad(s\in S,\ g\in L),
\end{equation}
because the expression inside $\chi$ equals $B(\bfv_M,(s\circ g^{-1})\bfv_M)$; it is zero when this element fixes $P$ and a nonzero square otherwise.

Now use the model $V=\F_q\oplus M_2(q)$, identifying the two
$\SU_2(q)$-factors with $\SL_2(q)$ via
$\SU_2(q)\cong\SL_2(q)$  for simplicity. For $t\in\F_q$, we define 
\[
 \mathcal E_L(t):=\{X\in\SL_2(q):\tr(X)=t,\quad
   \chi(2-\tr(Xg))\in\{0,1\}\text{ for every }g\in L\}.
\]
The condition \eqref{eqn_chiA4S4} implies  that
$M^{-1}sM\in\mathcal E_L(\tr(s))$ for every $s\in S$.
The two $\SL_2(q)$-conjugacy classes of $L$ are fused in $\GL_2(q)$, cf. \cite{Dickson1904}*{Chapter~XII}. Since simultaneous conjugation of $X$ and $L$ preserves both $\tr(X)$ and  every trace $\tr(Xg)$ with $g\in L$, it suffices to use
the representative $L=\la A_{\mathrm s},B_{\mathrm s}\ra$ from
Subsection \ref{sec:2_A5}. Choose $\tau\in\F_q$ with $\tau^2=2$.
Exhaustive computation gives
\[
\begin{aligned}
 \mathcal E_L(1)&=\{g\in L:\tr(g)=1\}
 &&\text{for }q\in\{239,359,479,719,1439\},\\
 \mathcal E_L(\tau)&=\varnothing
 &&\text{for }q\in\{239,359,479,719,1439,2879\}.
\end{aligned}
\]
These computations test all matrices of the indicated traces against
all $120$ elements of $L$.\footnote{{For $t=1$ and $t=\tau$, the
discriminant $t^2-4$ is $-3$ and $-2$, respectively, and both values are
nonsquares for the listed primes.}
Thus every matrix of trace $t$ in $\SL_2(q)$ has the unique form
$\left(\begin{smallmatrix}a&b\\
b^{-1}(a(t-a)-1)&t-a\end{smallmatrix}\right)$ with
$a\in\F_q$ and $b\in\F_q^{\times}$, giving $q(q-1)$ matrices to test.
The choice between $\tau$ and $-\tau$ does not affect emptiness, since
$-I_2\in L$.}

If $S\cong2.A_4$, the trace-one elements of $S$ generate $S$. The first computational equality therefore implies that $M^{-1}SM\leq L$. Hence the subgroup
\[
 \{s\circ(M^{-1}sM):s\in S\}
\]
has order $|S|/2=12$ and fixes $P$. Since $\overline G=\overline{S\circ L}$, the orbit-stabilizer
theorem gives
\[
 |P^{\overline G}|\leq\frac{24\cdot120/2}{12}=120<q+1,
\]
which contradicts the transitivity. On the other hand, if $S\cong2.S_4$, choose an element of order $8$.\footnote{For each  element $s$ of order $8$ in $\SL_2(q)$, we have $\tr(s)^2-2=\tr(s^2)=0$.} Its trace is $\pm\tau$. Since $-I_2\in S$, replacing this element by its negative if necessary yields an element $s\in S$ with trace $\tau$. We then deduce $M^{-1}sM\in\mathcal E_L(\tau)$, contradicting the second computational equality. Both exceptional cases are excluded, so $S/Z$ is contained in the normalizer of a nonsplit torus.

It remains to choose $D$ and prove the asserted normality and index.
Lifting the nonsplit torus from $\PSL_2(q)$ gives a nonsplit maximal
torus $T\cong C_{q+1}$ of $\SU_2(q)$ with $S\leq N_{\SU_2(q)}(T)$.
If $S$ is cyclic, we can choose $T$ to contain $S$ and take $D=S$.
If $S$ is noncyclic and $S\not\cong Q_8$, then $D=S\cap T$ is the
unique cyclic subgroup of index two in $S$. Thus, in both cases, $D$ is
characteristic in $S$ and contains $Z$. By
\eqref{eqn_one_factor_centralizer}, $\overline{D\circ I_2}$ is normal in
$\overline G$; since $\overline H$ is also normal, so is their product
$\overline{D\circ L}$. Equation \eqref{eqn_one_factor_normalizer} gives
\[
 [\overline G:\overline{D\circ L}]
 =[\overline G:\overline{S\circ L}]\,[S:D]\mid2e.
\]
Up to conjugation in the first factor, we can take $T=\la d_\xi\ra$ without loss of generality. 
Finally, suppose that $S\cong Q_8$. By transitivity and
\eqref{eqn_one_factor_normalizer}, we have $(q+1)\mid 480e$. Since
$p\geq239$ and $e$ is odd, this forces $e=1$, and hence
$\overline G=\overline{S\circ L}$. Choose any $D\cong C_4$ in $S$.
Then $\overline{D\circ L}$ is normal of index two in $\overline G$.
The centralizer $T=C_{\SU_2(q)}(D)$ is a nonsplit maximal torus normalized
by $S$, and $S\cap T=D$. After conjugating the first factor, we may again
take $T=\la d_\xi\ra$. This completes the proof.
\end{proof}

Write $D=\la d_{\xi^k}\ra$, where $k\mid (q+1)$, and  set
$J:=D\circ L$ and $\overline J:=\overline{D\circ L}$. Since $Z\leq D\cap L$ and  $J$ acts faithfully on $\{\la\bfv_M\ra: M\in \mathcal{M},\, \det(M)=1 \}$, we have 
\[
 |\overline J|=\frac{60(q+1)}k,\qquad
 \overline J\unlhd\overline G,\qquad
 [\overline G:\overline J]\mid2e.
\]
Recall that $\nu\in\blacksquare_q$ is fixed. Choose
$\delta\in\F_{q^2}^{\times}$ such that $\delta^q=-\delta$ and
$\delta^2=\nu$.

\begin{proposition}\label{prop_2A5_prime_field}
	Assume  that \eqref{eqn_A5_full_span} holds and Case \textup{(III)} occurs.  Let $P=\la \bfv_M\ra\in \cB$. Set
\[r=|(M^{-1}DM\cap L)/Z|,\] and let $m$ be the number of
$\overline J$-orbits on $\cB$. Then $q$ is prime,
$q\equiv-1\pmod{120}$, and 
\[\mathcal{T}_{q,k}= \{ t\in \F_q: 2-t\delta (y-y^{-1})\in \square_q \text{ for every $y\in \la \xi^k\ra$}\} \setminus (\{0\} \cup \{t:1+\nu t^2=0\}) \ne \emptyset.\]
Moreover, we have 
\[ m\in\{1,2\},\, \quad r\in\{1,2,3,5\}, \qquad k=\frac{60m}{r}.\]  For any two distinct points $\la\bfv_M\ra$ and
$\la\bfv_N\ra$ in the same $\overline J$-orbit, we have
\[
B(\bfv_M,\bfv_N)\in\square_q.
\]
 Finally,
\[
 q<\left(\frac{120m}{r}\right)^2,\qquad 2kr\mid (q+1).
\]
\end{proposition}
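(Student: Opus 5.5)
The argument follows the external-line case (Lemma \ref{lem_A5} and Corollary \ref{cor_caseI_prime_branches}), with the torus $D=\la d_{\xi^k}\ra$ acting in the first factor replacing $T\cap G$.

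\textbf{Square class within a $\overline J$-orbit.} Fix $P=\la\bfv_M\ra$. Because the first coordinate of $\bfv_M$ is $1$, we have $\Stab_J(P)=\Stab_J(\bfv_M)$, so Theorem \ref{thm_square_class} applies to $J$. Since $\operatorname{disc}(Q)\in2\square_q$ and $q\equiv3\pmod4$, we get $\epsilon(g)=\chi(B(\bfv_M,g\bfv_M))$ whenever $gP\ne P$, and $\epsilon$ is trivial on $I_2\circ L$ because $L$ is perfect. It therefore suffices to show that $\epsilon(d_y\circ I_2)=1$ for $y\in\la\xi^k\ra$. Writing $M'=M$, we have
\[
B(\bfv_M,(d_y\circ I_2)\bfv_M)=2-\tr(M^{-1}d_yM)=2-(y+y^{-1}).
\]
This is a nonzero square by Lemma \ref{lem_xisquare} exactly when the exponent of $y$ relative to $\xi$ is odd.

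\textbf{Forcing $k$ even and $\epsilon\equiv1$.} Consider the three points $P$, $d_{\xi^k}P$, $d_{\xi^{2k}}P$. If $k$ were odd, Lemma \ref{lem_disc} together with Lemma \ref{lem_xisquare} would give inconsistent classes, since $\epsilon$ is a homomorphism and $\epsilon(d_{\xi^k})$ would equal both $-1$ and $+1$ on the appropriate powers. Orbit-length considerations will show $(q+1)/k>2$, so this comparison is available. Consequently $\epsilon\equiv1$ on $J$, every $B(\bfv_M,g\bfv_M)$ with $gP\ne P$ is a nonzero square, and transporting by $J$ gives the claim for any two distinct points of the same $\overline J$-orbit.

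\textbf{Orbit counting.} The stabilizer in $\overline J$ of $P$ is $\{d\circ M^{-1}dM: d\in D,\ M^{-1}dM\in L\}/Z$, of order $r$. Since $M^{-1}DM\cap L$ is cyclic inside $2.A_5$ and the semiregularity argument of Lemma \ref{lem_hyperbolic_factor_orbit} rules out order $4$ modulo $Z$ patterns with an extra $-1$, we get $r\in\{1,2,3,5\}$. Each $\overline J$-orbit then has length $60(q+1)/(kr)$, and $\overline G/\overline J$ permutes the $m$ orbits transitively, so $m\mid2$ by $[\overline G:\overline J]\mid 2e$ once $e=1$ is known. Hence $q+1=m\cdot60(q+1)/(kr)$, giving $k=60m/r$. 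The divisibility $2kr\mid(q+1)$ follows because $D$ contains $Z$ and $d_{\xi^k}$ has order $(q+1)/k$, which must be even with the relevant $r$-part embedded.

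\textbf{Nonemptiness of $\mathcal T_{q,k}$.} Choose an element $g\in L$ of trace $0$ (order $4$) whose conjugate fails to commute with $M^{-1}DM$. Writing $M^{-1}(d_y\circ g)$ in coordinates, $2-\tr(d_yMgM^{-1})$ takes the form $2-t\delta(y-y^{-1})$, where $t\in\F_q$ is read off from $MgM^{-1}$ after normalizing by $\delta$. All these values are squares by the previous step, so $t\in\mathcal T_{q,k}$. The conditions $t\ne0$ and $1+\nu t^2\ne0$ come from $P$ not being fixed and from $\det=1$ nondegeneracy.

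\textbf{Weil bound.} Apply Lemma \ref{lem_caseI_torus_sum} with $\beta=2$ and $\alpha=-t\delta$, so that $\alpha^q=t\delta$ and $\beta^2-4\alpha^{q+1}=4(1+\nu t^2)\ne0$. This yields $(q+1)/k<2\sqrt q$, that is, $q<(2k)^2=(120m/r)^2$. Combined with $q\equiv-1\pmod{120}$, $p\ge239$ and $k\mid 120e$, this excludes $e\ge3$ exactly as in Corollary \ref{cor_caseI_prime_branches}, so $q$ is prime.

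\textbf{Main obstacle.} I expect the hardest part to be the explicit coordinate computation that puts $2-\tr(d_yX)$ in the form $2-t\delta(y-y^{-1})$, together with pinning down $r\in\{1,2,3,5\}$ rigorously.
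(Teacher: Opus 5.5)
Your overall route is the paper's: the stabilizer $(M^{-1}DM\cap L)/Z$, the orbit count giving $k=60m/r$, triviality of $\epsilon$ on $J$, an order-four $g\in L$, Lemma~\ref{lem_caseI_torus_sum} with $\alpha=-t\delta$, $\beta=2$, and then $e=1$. However, two steps fail as written.

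First, the parity is reversed. Since $q\equiv3\pmod4$, we have $-1\in\blacksquare_q$, so by Lemma~\ref{lem_xisquare} the value $2-(y+y^{-1})$ is a nonzero square exactly when the exponent of $y$ with respect to $\xi$ is \emph{even}, not odd. More seriously, your three-point argument for forcing $k$ even cannot work. For $P$, $d_{\xi^k}P$, $d_{\xi^{2k}}P$, the criterion of Lemma~\ref{lem_disc} (with $\operatorname{disc}(Q)\in2\square_q$) reduces to $2-\xi^{2k}-\xi^{-2k}\in\square_q$, and this holds for every $k$. Indeed $d_{\xi^{j}}\mapsto\chi(2-\xi^{j}-\xi^{-j})=(-1)^{j}$ is a consistent homomorphism on $D$ whatever the parity of $k$, so no contradiction arises. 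Evenness of $k$ must come from the orbit count instead. The group $(M^{-1}DM\cap L)/Z$ is a cyclic subgroup of $L/Z\cong A_5$, so $r\in\{1,2,3,5\}$ with no semiregularity argument needed. Then $k=60m/r$ is a multiple of $60/r\in\{60,30,20,12\}$, hence even. So the orbit count has to come first. Only after it does Lemma~\ref{lem_xisquare} give $\epsilon\equiv1$ on $J$, and evenness of $k$ is also a hypothesis of Lemma~\ref{lem_caseI_torus_sum}.

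Second, you do not justify $t\neq0$. Choosing $g$ so that $MgM^{-1}$ does not commute with $D$ only makes $MgM^{-1}$ non-diagonal, i.e.\ $b\neq0$. That is what shows $d_y\circ g^{-1}$ never fixes $P$ and that $1+\nu t^2=bb^q\neq0$. It says nothing about $t$. If $MgM^{-1}$ is antidiagonal, then $t=0$, $P$ is still moved, and every value $2-t\delta(y-y^{-1})$ equals the square $2$. In that case Lemma~\ref{lem_caseI_torus_sum} yields nothing: it requires $\alpha\neq0$, and for $\alpha=0$ the polynomial $f$ is a square. You need an order-four $g$ with $MgM^{-1}$ neither diagonal nor antidiagonal. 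The paper obtains it as follows. The elements of order four generate $L$. If all their $M$-conjugates were diagonal or antidiagonal, then $MLM^{-1}$ would permute the two coordinate lines, and being perfect it would fix both, contradicting absolute irreducibility of $L$ on its natural module.

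With these two repairs, the rest of your argument goes through as in the paper. The coordinate form you flagged as the main obstacle is immediate, because a trace-zero element of $\SU_2(q)$ has diagonal $\pm\delta t$ with $t\in\F_q$. The Weil bound gives $q<4k^2$, and $k\le120e/r$ together with $p\ge239$ forces $e=1$, hence $m\le2$. Finally, $2kr\mid q+1$ because $r$ divides $|M^{-1}DM/Z|=(q+1)/(2k)$.
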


\begin{proof}
	Since the chosen representative $\mathbf v_M$ has first coordinate $1$ by \eqref{eqn_hyperbolic_normalized_point}, we have
	\begin{equation}\label{eqn_Stabequal}
		\Stab_{\overline{J}}(\la \bfv_M\ra)=\Stab_{J}(\bfv_M)=(M^{-1}DM\cap L)/Z,
	\end{equation} 
	because $dMg^{-1}=M$ if and only if $ g=M^{-1}dM.$  This is a cyclic subgroup of $A_5$ since $D$ is cyclic, so $r\in\{1,2,3,5\}$.  Every $\overline J$-orbit of $\cB$ has length $60(q+1)/(kr)$. Since $\overline J\unlhd\overline G$ and $\overline G$ acts transitively on $\cB$, we have
\[
 k=\frac{60m}{r},\qquad m\leq2e.
\]
It follows that $k$ is even. The inclusion
$(M^{-1}DM\cap L)/Z \leq M^{-1}DM/Z$ implies that $2kr\mid (q+1)$.

The equality  \eqref{eqn_Stabequal} allows us to apply Theorem
\ref{thm_square_class}, so the homomorphism $\epsilon$ is trivial on $H$. For $y\in\la\xi^k\ra\setminus\{1\}$,  by Lemma \ref{lem_xisquare} and
$q\equiv3\pmod4$, we obtain 
$B(\bfv_M,(d_y\circ I_2)\bfv_M)=2-y-y^{-1}\in\square_q$. Hence  the homomorphism $\epsilon$ is also trivial on $D\circ L$, and then for any two distinct points $\la\bfv_M\ra,\la\bfv_N\ra$ in the same $\overline J$-orbit, $B(\bfv_M,\bfv_N)\in\square_q.$

We first observe that, for every $M$, there exists an element $g\in L$ of order four such that $MgM^{-1}$ is neither diagonal nor antidiagonal.\footnote{The elements of order four generate $L$.
If all their conjugates by $M$ were diagonal or antidiagonal,
then $MLM^{-1}$ would preserve the pair of coordinate lines.
Since $L$ is perfect, its induced action on this pair would be trivial.
Thus both lines would be invariant, contrary to absolute irreducibility.}  Choose such an element $g\in L$, and write
\[
MgM^{-1}=
\begin{pmatrix}
	\delta t&b\\
	-b^q&-\delta t
\end{pmatrix},
\qquad tb\ne0.
\]
Since it lies in $\SU_2(q)$, we have 
\[
bb^q=1+\nu t^2\ne0.
\]
Moreover, $d_y\circ g^{-1}$ cannot fix $P$ for any
$y\in\la\xi^k\ra$, since otherwise
$MgM^{-1}=d_y^{-1}$, contradicting $b\ne0$. Hence
\[
2-t\delta(y-y^{-1})\in\square_q
\qquad\text{for every }y\in\la\xi^k\ra,
\]
and therefore $t\in\mathcal T_{q,k}$.

 Since $4+4\nu t^2=4bb^q\ne0$ and every character value is $1$,
 Lemma \ref{lem_caseI_torus_sum}, applied with $\alpha=-\delta t$ and
 $\beta=2$, gives $(q+1)/k<2\sqrt q$, and hence $q<4k^2$. By \eqref{eqn_hyperbolic_prime_congruence}, we have $p\geq239$. If $e>1$, then $p^e>(240e)^2$, contradicting $k\leq120e/r$ and $q<4k^2$.
Thus $e=1$, so $m\leq2$, and substituting $k=60m/r$ gives the stated
bound. This completes the proof.
\end{proof}

\subsection{Computer-assisted elimination of the remaining cases}\label{sec:magma}

The remaining finite list of parameters is eliminated by exact
finite-field arithmetic in Magma~V2.28-9~\cite{Magma}.

\begin{proposition}\label{prop_A5_scalar_bound}
In the external-line case with $(q+1)/k>2$, let $\mathcal A_{q,k}$ be defined by \eqref{eqn_caseI_admissible_scalars}. The condition 
$\mathcal A_{q,k}\ne\varnothing$ leaves exactly
the following possibilities:
\[
\begin{array}{c|l}
k&q\\ \hline
60&179,239,359,419,479,599,659,719,839,1019,1259\\
120&359,479,599,719,839,1319,1439,1559,2039,2399.
\end{array}
\]
When $(q+1)/k\leq2$, the only remaining pair is $(q,k)=(239,120)$.

In the one-factor case \textup{(III)} of Subsection \ref{sect:hyperbolic}, the conditions 
in Proposition \ref{prop_2A5_prime_field} leave exactly the following possibilities:
\[
\begin{array}{c|c|c|l}
m&r&k&q\\ \hline
1&1&60&239,359,479,599,719,839\\
1&2&30&239,479\\
1&3&20&239\\
2&1&120&239,479,719,1439,2399\\
2&2&60&239,479,719\\
2&3&40&239,479,719\\
2&5&24&239.
\end{array}
\]
\end{proposition}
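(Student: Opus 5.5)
This is a finite, computer-assisted enumeration. The earlier results already confine the parameters to a finite range, and the plan is to enumerate that range and test the necessary conditions by exact finite-field arithmetic in Magma.

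\emph{External-line case with $(q+1)/k>2$.} By Corollary \ref{cor_caseI_prime_branches}, $q$ is prime, $q\equiv-1\pmod{60}$, $k=60m$ with $m\in\{1,2\}$, and $q<(120m-1)^2$. We also need $k\mid(q+1)$ and $(q+1)/k>2$. For each such pair $(q,k)$ we proceed as follows.
\begin{enumerate}
\item Fix a nonsquare $\nu$ and a primitive element $\omega$ of $\F_{q^2}$, and set $\xi=\omega^{q-1}$.
\item List the $(q+1)/k$ elements $y$ of $\la\xi^k\ra$ and compute $c_y:=\nu(y+2+y^{-1})\in\F_q$.
\item Determine $\mathcal A_{q,k}$ of \eqref{eqn_caseI_admissible_scalars} by testing each $a\in\F_q\setminus\{0,-4\nu\}$ for $\chi(a+c_y)=1$ for all $y$.
\end{enumerate}
The set of $q$ is small: there are primes $q\equiv -1\pmod{60}$ below $119^2$ and below $239^2$, so a few hundred primes in total. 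Each test costs $O(q^2/k)$ Legendre-symbol evaluations. The search is cheap, and we expect it to output exactly the displayed table. The subcase $(q+1)/k\le 2$ needs no computation, since Lemma \ref{lem_caseI_small_torus} already gives $(q,k)=(239,120)$.

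\emph{Case \textup{(III)}.} Proposition \ref{prop_2A5_prime_field} gives the following constraints: $q$ is prime, $q\equiv-1\pmod{120}$, $m\in\{1,2\}$, $r\in\{1,2,3,5\}$, $k=60m/r$ must be an integer, $2kr\mid q+1$, and $q<(120m/r)^2$. Note that $k=60m/r$ being integral excludes nothing here.
\begin{enumerate}
\item For each triple $(m,r,k)$, list the primes $q\equiv -1\pmod{120}$ with $2kr\mid q+1$ and $q<(120m/r)^2$.
\item For each such $q$, fix $\delta$ with $\delta^q=-\delta$ and $\delta^2=\nu$.
\item Compute $\mathcal T_{q,k}$ by testing each $t\in\F_q^\times$ with $1+\nu t^2\ne0$. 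For each $y\in\la\xi^k\ra$, the quantity $2-t\delta(y-y^{-1})$ lies in $\F_q$ because $y^q=y^{-1}$ and $\delta^q=-\delta$. Check that it is a nonzero square.
\item Record the pairs with $\mathcal T_{q,k}\neq\varnothing$.
\end{enumerate}
For example, with $r=5$, $m=2$ we get $k=24$ and $q<48^2=2304$, and $240\mid q+1$ leaves only a few primes. The bound $q<(120m/r)^2$ is never larger than $240^2$, so the enumeration is again finite and small.

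\emph{Expected obstacles.} The mathematics is done; the main obstacle is bookkeeping and correctness of the implementation. This means computing $\xi$ and the subgroup $\la\xi^k\ra$ correctly inside $\F_{q^2}$, and remembering to exclude the values $0$, $-4\nu$ and the roots of $1+\nu t^2$. A second point is to make sure that the answer does not depend on the choice of the nonsquare $\nu$. For $\mathcal A_{q,k}$, replacing $\nu$ by $\nu s^2$ rescales $a$ by $s^2$. For $\mathcal T_{q,k}$, replacing $\delta$ by $s\delta$ rescales $t$ by $s^{-1}$. So one choice suffices in both cases. I would also cross-check a few surviving entries, for instance $(q,k)=(179,60)$, by hand to confirm that nonempty sets are reported correctly.
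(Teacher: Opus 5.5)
Your proposal is correct and matches the paper's proof. Like you, the paper takes the finite parameter ranges from Corollary \ref{cor_caseI_prime_branches} and Proposition \ref{prop_2A5_prime_field} and runs an exhaustive Magma check of the square conditions defining $\mathcal A_{q,k}$ and $\mathcal T_{q,k}$; your use of Lemma \ref{lem_caseI_small_torus} for the subcase $(q+1)/k\le 2$ and your argument that the outcome does not depend on the choice of $\nu$ (or $\delta$) are valid additions.
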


\begin{proof}
By Corollary \ref{cor_caseI_prime_branches} and Proposition
\ref{prop_2A5_prime_field}, we obtain $q<(120m-1)^2$ in the external-line case and $q<(120m/r)^2$ in the hyperbolic case \textup{(III)}, where $m\in\{1,2\}$ and
$r\in\{1,2,3,5\}$. An exhaustive check of the corresponding square conditions in Magma \cite{Magma} gives the displayed lists of parameters. 
\end{proof}

\begin{proposition}\label{prop:A5-finite-verification}
Assume the hypotheses of Section \ref{sec:4} hold. Then the external-line case and Case \textup{(III)} of the hyperbolic case do not occur.
\end{proposition}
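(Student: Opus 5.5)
Starting from the finite parameter lists of Proposition \ref{prop_A5_scalar_bound}, I plan to eliminate each pair $(q,k)$ (external-line case) and each quadruple $(m,r,k,q)$ (Case \textup{(III)}) by an exact search that rebuilds the candidate orbit directly. The search uses a reduced normal form for a point of $\cB$ and the fact, proved above, that all pairwise $B$-values inside one $\overline J$-orbit must lie in a single fixed square class.

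\emph{External-line case.} Fix $q$ and $k$. By Lemma \ref{lem_A5} I rescale so that $-u^{q+1}/(2b_5)=\nu$. Since $\la\xi\ra$ acts transitively on elements of $U$ of fixed norm, I conjugate by $T$ to take $u$ to be a fixed element $u_0$ of norm $-2b_5\nu$. The only remaining unknown is $\bfw\in W$ with $Q_{A_5}(\bfw)=-u_0^{q+1}$; up to the action of $H\le\Omega_3(q)$ there are about $q/60$ choices. For each choice I form the orbit $J\bfv$, of size $60(q+1)/k$. By Lemma \ref{lem_A5}, the value $B(\bfv,g\bfv)$ must be a nonzero element of $-2b_5\square_q$ for every $g\in J\setminus\{1\}$. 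A cheap first filter is the condition $a\in\mathcal A_{q,k}$ for every involution $h$ with $h\bfw\notin\F_q\bfw$, since this must hold for all such involutions, not just one. After that I test all $60(q+1)/k$ elements of $J$. When $m=2$, every surviving first orbit must still be extended by a second $\overline J$-orbit. This is done by searching over $\bfw'$ in the same way and imposing the criterion of Lemma \ref{lem_disc} on mixed triples, or, more simply, by requiring that $\overline G/\overline J$ of order $2$ be realized by an element of $N_{\SO_5(q)}(J)$, which is $(T\times H).\la\tau\ra$. The residual pair $(239,120)$ from Lemma \ref{lem_caseI_small_torus} is treated by a direct search of the same kind, with orbit lengths $60$ or $120$.

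\emph{Case \textup{(III)}.} Points have the form $(1,M)$ with $M\in\SU_2(q)$, and $J=D\circ L$ acts by $M\mapsto dMg^{-1}$. The orbit of $M$ is therefore the double coset $DML$, and only $|D\backslash \SU_2(q)/L|$ representatives need to be enumerated. For each representative $M$ with stabilizer of order $r$, I check that $2-\tr(M^{-1}d^{-1}M'g)\in\square_q$ for all pairs in the double coset. Equivalently, I check $\chi(2-\tr(M^{-1}dMg))\in\{0,1\}$ for $d\in D$ and $g\in L$, using the trace parametrization from the footnotes. For $m=2$, the second orbit must be the image of the first under an element normalizing $\overline J$; by \eqref{eqn_one_factor_normalizer} this element lies in $N_{\SU_2(q)}(T)\circ L$ when $e=1$. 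So I adjoin the coset of an element of $N(T)\setminus T$ and test the union against Lemma \ref{lem_disc}.

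\emph{Expected obstacle.} The main difficulty is making these searches feasible and provably exhaustive for the largest primes, such as $q=2399$ and $2039$. I would first apply the necessary condition on the single generator pair, namely the trace-one and order-four elements as in Proposition \ref{prop_2A5_prime_field}, which prunes most candidates. Only the survivors then receive the full orbit test. Correctness of the reductions, including the normalization of $u$, the uniqueness of the representatives, and the conjugacy choices of $L$ and $T$, has to be justified so that the enumeration really covers all cases.
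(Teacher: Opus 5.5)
Your plan is essentially the paper's proof: an exhaustive exact search over orbit representatives, taken modulo the centralizing torus and modulo $H$ (resp.\ $L$). It tests the square-class conditions of Lemma~\ref{lem_A5} and Proposition~\ref{prop_2A5_prime_field}, and it handles $q=239$ and the $m=2$ gluing by a complete search. The paper organizes this more economically (Klein four-group eigencoordinates with $a_1+a_2+a_3=-4\nu$ in the external-line case; in Case~(III), quotienting by all of $T=\la d_\xi\ra$, which centralizes $J$, via $Q_8$-coordinates on $T\backslash\SU_2(q)/L$ rather than by $D$), and since for $q\ne239$ no single orbit survives, the two-orbit extension is only needed at $q=239$.
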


\begin{proof}[Computer-assisted verification]
 In each case, the two embedding classes are conjugate in $\PGO_5(q)$.
We therefore fix one representation $L$ in each model, with
$L\cong A_5$ in the external-line case and $L\cong2.A_5$
in the hyperbolic case. For $q=239$, an exhaustive Magma computation
tests the complete candidates, including compatibility between
distinct $\overline J$-orbits when $m=2$, and finds no BLT-set of size $240$.
For the remaining values of $q$, we use the following procedures,
implemented in Magma~\cite{Magma}.
\begin{description}
\setlength{\itemsep}{2pt}
\setlength{\parsep}{0pt}

\item[(i)] \emph{The external-line case with $q\ne239$.}
Consider the $20$ pairs $(q,k)$ in Proposition
\ref{prop_A5_scalar_bound} for which $(q+1)/k>2$. 
 Choose a representative $(u,\mathbf w)$ with $u^{q+1}=-2b_5\nu$ and $Q_{A_5}(\mathbf w)=2b_5\nu$. Fix a Klein four-group $\{1,\tau_1,\tau_2,\tau_3\}\leq H$. For each $i$, choose a nonzero vector $\mathbf w_i$ spanning the $(+1)$-eigenspace of $\tau_i$. Then these three eigenspaces form an orthogonal decomposition of $W$. Consequently, every admissible $\bfw$ determines
\[
a_i=
-\frac{Q_{A_5}(\bfw+\tau_i\bfw)}{2b_5}
\qquad (1\leq i\leq3),
\]
where
\[
(a_1,a_2,a_3)
 \in(\mathcal A_{q,k}\cap\square_q)^3,
\qquad
a_1+a_2+a_3=-4\nu.
\]
Let $\bfw=x_1\bfw_1+x_2\bfw_2+x_3\bfw_3$. Then reconstruct the vector $\bfw$ from the conditions above. By \eqref{eqn_caseI_square_condition},  retain $\bfw$ only if
\[
 -\frac{Q_{A_5}(\bfw+h\bfw)}{2b_5}
 \in\mathcal A_{q,k}
\]
for every element $h\in H$ of order five.\\ 
\textbf{Result}: No vector survives.

\item[(ii)] \emph{The hyperbolic case \textup{(III)} with $q\ne239$.}
Choose
\[
 Q=\langle g_1,g_2\rangle\cong Q_8\leq L\cong 2.A_5,
 \qquad g_3=g_1g_2.
\]  
For a matrix $M\in \mathcal{M}$ with $\det(M)=1$, define $t_i\in\F_q$ by
\[
 (Mg_iM^{-1})_{11}=\delta t_i.
\]
There is a bijection between the set of the left $\la d_\xi\ra$-cosets in $\SU_2(q)$ and $\{(t_1,t_2,t_3)\in \F_q^3:  -\nu(t_1^2+t_2^2+t_3^2)=1 \}$.\footnote{On $\mathcal{M}$, we consider the $3$-dimensional subspace $\mathcal{M}_0$ consisting of the matrices of trace $0$. Equip it with the inner product $\la X,Y\ra=\frac{1}{2}\tr(XY)$. The quaternion relations in $Q_8$ imply that $g_1,g_2,g_3$ form an orthogonal basis of $\mathcal{M}_0$. Set $\Delta=\operatorname{diag}(\delta,-\delta)$. Then we have $\tr(M^{-1} \Delta M g_i )=2\nu t_i$ for each $i$. It follows that $\la d_\xi\ra  \backslash\SU_2(q)\leftrightarrow \{(t_1,t_2,t_3)\in \F_q^3: -\nu(t_1^2+t_2^2+t_3^2)=1\}$.} Under this bijection, the $\operatorname{Ad}(L)$-orbits 
correspond to the double cosets in $\la d_\xi\ra  \backslash\SU_2(q)/L$.

For each remaining quadruple $(q,m,r,k)$ in Proposition
\ref{prop_A5_scalar_bound}, enumerate the
$\operatorname{Ad}(L)$-orbits of the solutions of this equation and reconstruct representative matrices $M\in\SU_2(q)$
for the double cosets in $T\backslash\SU_2(q)/L$.

Put $Y=\langle\xi^k\rangle$, and choose a set
$\mathcal R\subseteq L$ containing one representative from each pair
$\{h,-h\}$.  Retain $M$ only if exactly $r$ pairs
$(y,h)\in Y\times\mathcal R$ satisfy
\[
 d_yMh=M,\quad y\in \la \xi^k\ra,\, h\in \mathcal{R}, 
\]
and, for every other pair,
\[
 2-\tr(M^{-1}d_yMh)\in\square_q.
\]
No matrix is retained.
\end{description}

Thus no candidate remains in either model. Hence neither the
external-line case nor Case \textup{(III)} occurs.
\end{proof}

\begin{proposition}\label{prop_A5_case}
Let $\cB$ satisfy the hypotheses of Theorem \ref{thm_main}, and assume
that $q\geq71$ and the full-span hypothesis holds. Then $\Gso$ has no
exceptional $A_5$-composition factor.
\end{proposition}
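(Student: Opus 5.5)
We argue by contradiction and assemble the case analysis of Section \ref{sec:4}. Assume that $\Gso$ has an exceptional composition factor $A_5$ with $p\ne5$, and set $H=\Gso^{(\infty)}$. By Lemma \ref{lem_unique_nonabelian_factor}(1), no composition factor of $\Gso$ is of the form $\PSL_2(q_0)$, so every non-abelian composition factor is $A_5$. By Lemma \ref{lem_unique_nonabelian_factor}(2), there are at most two such factors, and if there are two then $H\cong2.A_5\circ2.A_5$. Since $\overline G$ is reducible, Theorem \ref{thm_SO_5(q)} places $G$ in one of the geometries (i)--(vi). We then treat these geometries one at a time.

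For types (i)--(iii) and (v), Lemma \ref{lem_A5_geometric_reduction} applies directly under the full-span hypothesis \eqref{eqn_A5_full_span}: it reduces to $q\in\{79,239\}$ and excludes both values. This leaves type (iv), the external-line case, and type (vi), the hyperbolic case. In type (vi), Goursat's lemma gives four possibilities for $H$ inside $\Omega_4^+(q)$.
\begin{enumerate}
\item[(a)] $H$ is a graph subgroup induced by matrix conjugation. Then $C_V(H)$ is either hyperbolic or anisotropic. In the hyperbolic case $G$ stabilizes a pair of non-collinear singular points, which is type (ii) and is already excluded. In the anisotropic case $G$ stabilizes an external line, which is type (iv).
\item[(b)] $H$ is as in Case (I). This is excluded by Lemma \ref{lem_A5_twisted_diagonal_exclusion}.
\item[(c)] $H$ is as in Case (II). This is excluded by Lemma \ref{lem_A5_product_exclusion}.
\item[(d)] $H$ is as in Case (III).
\end{enumerate}
Hence only type (iv) and Case (III) remain.

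For the external-line case, Lemmas \ref{lem_A5} and \ref{lem_caseI_small_torus} show that either $(q+1)/k\le 2$, which forces $(q,k)=(239,120)$, or $60\mid k$ and the admissible-scalar set $\mathcal A_{q,k}$ is nonempty. Corollary \ref{cor_caseI_prime_branches} then makes $q$ prime and bounds it by $q<(120m-1)^2$. For Case (III), Lemma \ref{lem_hyperbolic_toral_core} and Proposition \ref{prop_2A5_prime_field} give, in the same way, a prime $q$ with $q<(120m/r)^2$ and $\mathcal T_{q,k}\neq\varnothing$. Proposition \ref{prop_A5_scalar_bound} reduces both cases to the finite parameter lists displayed there, and Proposition \ref{prop:A5-finite-verification} eliminates every entry. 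Hence no geometry admits an $A_5$ composition factor, which gives the contradiction.

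The step that needs the most care is the bookkeeping, not any new estimate. We must check that every configuration allowed by Theorem \ref{thm_SO_5(q)} and Goursat's lemma has been routed to one of the cited results. In particular, we must confirm that the twisted-graph reduction in (a) really lands in type (ii) or (iv). We must also confirm that the standing assumptions of those results hold here, namely $q\geq71$, $p\ne5$, and \eqref{eqn_A5_full_span}, together with the invariance conditions \eqref{eqn_A5_avoids_invariant_subspaces} that they use.
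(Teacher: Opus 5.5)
Your proposal is correct and follows essentially the same route as the paper. Lemma \ref{lem_A5_geometric_reduction} disposes of types (i)--(iii) and (v). In type (vi), the conjugation-induced graph subgroups are routed back to types (ii) and (iv), and Lemmas \ref{lem_A5_twisted_diagonal_exclusion} and \ref{lem_A5_product_exclusion} exclude Cases (I) and (II). The external-line case and Case (III) are eliminated through Corollary \ref{cor_caseI_prime_branches}, Proposition \ref{prop_2A5_prime_field}, Proposition \ref{prop_A5_scalar_bound} and the Magma verification in Proposition \ref{prop:A5-finite-verification}. The paper's proof is simply the condensed two-line version of this bookkeeping.
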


\begin{proof}
Lemma \ref{lem_A5_geometric_reduction} reduces the analysis of an
exceptional $A_5$-composition factor to the external-line case and Cases
\textup{(I)}--\textup{(III)}. Lemmas
\ref{lem_A5_twisted_diagonal_exclusion} and
\ref{lem_A5_product_exclusion} exclude Cases \textup{(I)} and
\textup{(II)}, while Proposition \ref{prop:A5-finite-verification}
excludes the external-line case and Case \textup{(III)}.
\end{proof}

\begin{proof}[Proof of Theorem \ref{thm_main}]
For $q<71$, the classifications in
\cites{LawPenttila2003,Betten2013} and the automorphism-group data in
\cite{SmallBLT} give the result. Assume that $q\geq71$.

Since $\overline G$ is non-solvable and
$\overline G/\overline{\Gso}$ is solvable, $\Gso$ has a non-abelian composition factor. Since $G$ is reducible, it preserves one of the six geometries listed in Theorem \ref{thm_SO_5(q)}. By that theorem and Lemma
\ref{lem_subPSL}, every such factor is either $\PSL_2(q_0)$, where $q_0$
is a power of $p$, or $A_5$ with $p\ne5$.

If a factor $\PSL_2(q_0)$ occurs, we treat the six geometries in Propositions \ref{prop_case1}--\ref{prop_case6}, respectively. The first
three cases are impossible, the next two cases yield a classical BLT-set, and the last yields a classical or Kantor semifield BLT-set.

If the non-abelian factor is $A_5$ and $\cB$ lies in a hyperplane, Theorem
\ref{thm_charact} applies. Otherwise \eqref{eqn_A5_full_span} holds, and
Proposition \ref{prop_A5_case} excludes the case.
\end{proof}

\section{Concluding remarks}\label{sec:5}
Theorem \ref{thm_main} may also be rephrased by using the language of flock
generalized quadrangles. A flock of a quadratic cone determines an elation generalized
quadrangle of order $(q^2,q)$, and Knarr's construction produces the same
geometry from the associated BLT-set \cites{Thas1987,Knarr1992}. Under the
duality between $\cQ(4,q)$ and $W(3,q)$, the $q+1$ points of a BLT-set
correspond to $q+1$ totally isotropic lines of $W(3,q)$. Their lifts in $W(5,q)$ are the
totally isotropic planes through the distinguished point in the Knarr model.
Thus transitivity on the BLT-set corresponds to transitivity on this family
of totally isotropic planes in the Knarr model.

 Together with the result of Nelson and Penttila
\cite{NelsonPenttila2013}, we complete the classification of
the transitive BLT-sets with a non-solvable stabilizer in Theorem \ref{cor:full-classification}. However, the classification of BLT-sets admitting a solvable transitive full automorphism group remains open.  Betten \cite{Betten2009} constructed a transitive BLT-set over $\F_{41}$ whose full automorphism group is
\[
C_{21}\rtimes(C_2\times C_2)\cong S_3\times D_{14},
\]
which is a solvable group of order $84$. Betten's construction also recovers several known examples, but no infinite family containing this example over $\F_{41}$ is known. It remains open whether this example actually belongs to an infinite family. More generally, the classification of transitive BLT-sets admitting a solvable full automorphism group remains open.

\par\medskip
\noindent\textbf{Declaration on the use of generative AI tools.} 
During the preparation of this paper, the authors used GPT-5.6 Sol Pro and GPT-6 Astra Pro to assist in developing some of the arguments in Section~4, including those for the cases \(q=79\) and \(q=239\) in Lemma~4.1. 
The Magma \cite{Magma} programs used for the computations in Section~\ref{sec:magma} were developed primarily with the assistance of these tools. These tools were also used to improve the language and exposition of the manuscript.
The authors independently reviewed all AI-assisted material and code and verified the mathematical arguments, computational input ranges, and results. The authors take full responsibility for the content of the paper.
\par\medskip

\noindent\textbf{Acknowledgements.} 
Qing Xiang was supported by the National Key Research and Development Program of China (Grant No. 2025YFA1017700) and the National Natural Science Foundation of China (Grant No. 12131011). Weicong Li was supported by the National Natural Science Foundation of China (Grant No. 12301422).
The authors thank Professor Tao Feng for helpful comments and suggestions. The first author would like to thank Dr. Yanzhou Zhu for pointing out the use of first cohomology of $\SL_2(q)$ and $\Omega_3(q)$ to streamline the proofs of Propositions \ref{prop_case1} and \ref{prop_case3}.

\bibliographystyle{plain}


\begin{thebibliography}{10}
	
	\bibitem{BLT1990}
	L.~Bader, G.~Lunardon, and J.~A. Thas.
	\newblock Derivation of flocks of quadratic cones.
	\newblock {\em Forum Math.}, \textbf{2}(2):163--174, 1990.
	
	\bibitem{BOP2004some}
	L.~Bader, C.~M. O'Keefe, and T.~Penttila.
	\newblock Some remarks on flocks.
	\newblock {\em J. Aust. Math. Soc.}, \textbf{76}(3):329--343, 2004.
	
	\bibitem{BDEW1999}
	R.~D. Baker, J.~M. Dover, G.~L. Ebert, and K.~L. Wantz.
	\newblock Hyperbolic fibrations of {$\textup{PG}(3,q)$}.
	\newblock {\em European J. Combin.}, \textbf{20}(1):1--16, 1999.
	
	\bibitem{BEP2005}
	R.~D. Baker, G.~L. Ebert, and T.~Penttila.
	\newblock Hyperbolic fibrations and {$q$}-clans.
	\newblock {\em Des. Codes Cryptogr.}, \textbf{34}(2--3):295--305, 2005.
	
	\bibitem{BGR2010}
	J.~Bamberg, M.~Giudici, and G.~F. Royle.
	\newblock Every flock generalized quadrangle has a hemisystem.
	\newblock {\em Bull. Lond. Math. Soc.}, \textbf{42}(5):795--810, 2010.

	\bibitem{Betten2009}
	A.~Betten.
	\newblock A class of transitive {BLT}-sets.
	\newblock {\em Note Mat.}, \textbf{29}(suppl. 1):1--9, 2009.

	\bibitem{Betten2013}
	A.~Betten.
	\newblock Rainbow cliques and the classification of small {BLT}-sets.
	\newblock In {\em ISSAC 2013}, pages 53--60. ACM, 2013.
	
	\bibitem{SmallBLT}
	A.~Betten.
	\newblock A classification of {BLT}-sets of small order.
	\newblock \url{https://www.math.colostate.edu/~betten/blt.html}, 2009.

    \bibitem{Magma}
	W.~Bosma, J.~Cannon, and C.~Playoust.
	\newblock The {M}agma algebra system. {I}. The user language.
	\newblock {\em J. Symbolic Comput.}, \textbf{24}(3--4):235--265, 1997.
    
	\bibitem{BHR407}
	J.~N. Bray, D.~F. Holt, and C.~M. Roney-Dougal.
	\newblock {\em The maximal subgroups of the low-dimensional finite classical
		groups}, volume~\textbf{407} of {\em London Mathematical Society Lecture Note Series}.
	\newblock Cambridge University Press, Cambridge, 2013.
	\newblock With a foreword by Martin Liebeck.

    
    	\bibitem{ClineParshallScott1975}
	E.~Cline, B.~Parshall, and L.~Scott.
	\newblock Cohomology of finite groups of {Lie} type, {I}.
	\newblock {\em Publ. Math. Inst. Hautes \'{E}tudes Sci.},
	\textbf{45}:169--191, 1975.
    
    	\bibitem{ATLAS}
	J.~H. Conway, R.~T. Curtis, S.~P. Norton, R.~A. Parker, and R.~A. Wilson.
	\newblock {\em Atlas of finite groups}.
	\newblock Clarendon Press, Oxford, 1985.
    
	\bibitem{Dickson1904}
	L.~E. Dickson.
	\newblock {\em Linear groups: {W}ith an exposition of the {G}alois field
		theory}.
	\newblock Dover Publications, Inc., New York, 1958.
	\newblock With an introduction by W. Magnus.

  \bibitem{Hirschfeld1998}
    J.~W.~P. Hirschfeld.
    \newblock {\em Projective Geometries over Finite Fields}.
    \newblock 2nd ed., Oxford Math. Monogr., Clarendon Press,
    Oxford Univ. Press, Oxford, 1998.

	
	\bibitem{Huppert}
	B.~Huppert.
	\newblock {\em Endliche Gruppen. I}.
	\newblock Die Grundlehren der Mathematischen Wissenschaften, volume~134.
	Springer-Verlag, Berlin, 1967.
	
   \bibitem{BrauerAtlas}
	C.~Jansen, K.~Lux, R.~Parker, and R.~A. Wilson.
	\newblock {\em An atlas of Brauer characters}.
	\newblock London Mathematical Society Monographs. New Series, volume~11.
	Clarendon Press, Oxford, 1995.
	
	\bibitem{Kantor1991GenBLT}
	W.~M. Kantor.
	\newblock Generalized quadrangles, flocks, and {BLT} sets.
	\newblock {\em J. Combin. Theory Ser. A}, \textbf{58}(1):153--157, 1991.
    
      
	\bibitem{Kantor1982Rank3}
	W.~M. Kantor and R.~A. Liebler.
	\newblock The rank {$3$} permutation representations of the finite classical
	groups.
	\newblock {\em Trans. Amer. Math. Soc.}, \textbf{271}(1):1--71, 1982.
	
	\bibitem{King2005Survey}
	O.~H. King.
	\newblock The subgroup structure of finite classical groups in terms of
	geometric configurations.
	\newblock In {\em Surveys in combinatorics 2005}, volume \textbf{327} of {\em London
		Math. Soc. Lecture Note Ser.}, pages 29--56. Cambridge Univ. Press,
	Cambridge, 2005.
	
	\bibitem{KL129}
	P.~Kleidman and M.~Liebeck.
	\newblock {\em The subgroup structure of the finite classical groups}, volume
	\textbf{129} of {\em London Mathematical Society Lecture Note Series}.
	\newblock Cambridge University Press, Cambridge, 1990.
	
	\bibitem{Knarr1992}
	N.~Knarr.
	\newblock A geometric construction of generalized quadrangles from polar spaces
	of rank three.
	\newblock {\em Results Math.}, \textbf{21}(3--4):332--344, 1992.
	
	\bibitem{Law2004Con}
	M.~Law and T.~Penttila.
	\newblock Construction of {BLT}-sets over small fields.
	\newblock {\em European J. Combin.}, \textbf{25}(1):1--22, 2004.

	\bibitem{LawPenttila2003}
	M.~Law and T.~Penttila.
	\newblock Classification of flocks of the quadratic cone over fields of
	order at most $29$. \newblock {\em Adv. Geom.}, 	\textbf{3}(Special Issue):S232--S244, 2003.
	
	\bibitem{Finitefield}
	R.~Lidl and H.~Niederreiter.
	\newblock {\em Finite fields}, volume~\textbf{20} of {\em Encyclopedia of Mathematics and its Applications}.
	\newblock Cambridge University Press, Cambridge, 2nd edition, 1997.
	\newblock With a foreword by P. M. Cohn.
	
	\bibitem{Mitchell1914}
	H.~H. Mitchell.
	\newblock The subgroups of the quaternary abelian linear group.
	\newblock {\em Trans. Amer. Math. Soc.}, \textbf{15}(4):379--396, 1914.
	
	\bibitem{Nelson-thesis}
	E.~M. Nelson.
	\newblock {\em The group of the {M}ondello {BLT}-sets}.
	\newblock ProQuest LLC, Ann Arbor, MI, 2012.
	\newblock Thesis (Ph.D.)--Colorado State University.
	
	\bibitem{NelsonPenttila2013}
	E.~M. Nelson and T.~Penttila.
	\newblock On transitive irreducible {BLT}-sets.
	\newblock {\em Discrete Math.}, \textbf{313}(12):1284--1288, 2013.

	\bibitem{Payne1996}
	S.~E. Payne.
	\newblock The fundamental theorem of {$q$}-clan geometry.
	\newblock {\em Des. Codes Cryptogr.}, \textbf{8}(1--2):181--202, 1996.

    \bibitem{Serre}
     J.~P. Serre.
     \newblock \emph{Finite Groups: An Introduction,} 
     \newblock Surveys of Modern Mathematics, Vol.~10, International Press, 2016.  
	
	\bibitem{Thas1987}
	J.~A. Thas.
	\newblock Generalized quadrangles and flocks of cones.
	\newblock {\em European J. Combin.}, \textbf{8}(4):441--452, 1987.
	
	\bibitem{Thas1993CharFTW}
	J.~A. Thas.
	\newblock A characterization of the {F}isher-{T}has-{W}alker flocks.
	\newblock {\em Simon Stevin}, \textbf{67}(3--4):219--226, 1993.
	
	\bibitem{van2010}
	E.~R. van Dam and M.~Muzychuk.
	\newblock Some implications on amorphic association schemes.
	\newblock {\em J. Combin. Theory Ser. A}, \textbf{117}(2):111--127, 2010.
	
	\bibitem{Walker1976}
	M.~Walker.
	\newblock A class of translation planes.
	\newblock {\em Geometriae Dedicata}, \textbf{5}(2):135--146, 1976.

	\bibitem{Zsigmondy}
	K.~Zsigmondy.
	\newblock Zur {T}heorie der {P}otenzreste.
	\newblock {\em Monatsh. Math. Phys.}, \textbf{3}:265--284, 1892.
	
\end{thebibliography}

\end{document}